\newtheorem{theorem}{Theorem}[section]
\newtheorem{lemma}[theorem]{Lemma}
\newtheorem{proposition}[theorem]{Proposition}
\newtheorem{corollary}[theorem]{Corollary}
\newtheorem{assumption}[theorem]{Assumption}
\newtheorem{remark}[theorem]{Remark}
\begin{document}
\setlength\arraycolsep{2pt}
\title[Estimation of the derivative of a regression]{On a Projection Estimator of the Regression Function Derivative}
\author{Fabienne COMTE*}
\address{*Universit\'e de Paris, CNRS, MAP5 UMR 8145, F-75006 Paris, France}
\email{fabienne.comte@parisdescartes.fr}
\author{Nicolas MARIE$^{\dag}$}
\address{$^{\dag}$Laboratoire Modal'X, Universit\'e Paris Nanterre, Nanterre, France}
\email{nmarie@parisnanterre.fr}
\keywords{}
\date{}
\maketitle
\noindent
%


\begin{abstract}
In this paper, we study the estimation of the derivative of a regression function in a standard univariate regression model. The estimators are defined either by derivating nonparametric least-squares estimators of the regression function or by estimating the projection of the derivative. 
We prove two simple risk bounds allowing to compare our estimators. More elaborate bounds under a stability assumption are then provided.  Bases and spaces on which we can illustrate our assumptions and first results are both of compact or non compact type, and we discuss the rates reached by our estimators. They turn out to be optimal in the compact case. Lastly, we propose a model selection procedure and prove the associated risk bound. To consider bases with a non compact support makes the problem difficult.
\end{abstract}
\textbf{AMS 2020 classification:} 62G05 - 62G08.

\textbf{Keywords:} Adaptive procedure - Derivative estimation - Non compact support - Nonparametric regression - Optimal rates - Projection method 
%


%
\section{Introduction}

In this paper, we consider the random design regression model
\begin{equation}\label{model}
Y_i = b(X_i) +\varepsilon_i
\textrm{ $;$ }i\in\{1,\dots,n\}
\textrm{, }n\geqslant 1,
\end{equation}
where $b(.)$ is the unknown continuously differentiable regression function, $X_1,\dots,X_n$ are independent and identically distributed (i.i.d.) random variables with density $f$ with respect to Lebesgue's measure, and the errors $\varepsilon_1,\dots,\varepsilon_n$ are i.i.d, unobserved, centered with variance $\sigma^2$, and independent of the $X_i$'s. The observations are $(X_i, Y_i)_{1\leqslant i\leqslant n}$, and we assume that $b$ is regular enough to admit a derivative. We are interested in nonparametric estimation of {\it the derivative} $b'$ of $b$, on a compact or a non-compact support.

\subsection{Motivation and bibliographical elements}
The question of nonparametric estimation of derivatives is not new and is studied in different contexts, such as density estimation or white noise model (see Efromovich \cite{EFROMOVICH98}), and not only in regression. Indeed, there can be a lot of reasons for estimating not only a function but also its derivative, which may be of  intrinsic interest as measure of slope for instance.
Recently, Bercu {\it et al.} \cite{BCD19} studied this question in the concrete application setting of sea shores water quality. Precisely, they propose an estimator defined as the derivative of the well-known Nadaraya-Watson estimator. Dai {\it et al.} \cite{DTG16} also mention applications to the modeling of human growth data (Ramsay and Silverman \cite{RS02}) or to Raman spectra of bulk materials (Charnigo {\it et al.} \cite{Char11}).
\\
\\
Derivatives of a rate optimal estimate of the regression function are proved to be rate optimal estimates of the corresponding derivatives, see Stone \cite{STONE80,STONE82}, who establishes optimal rates for local polynomial weighted estimators on a compact set. See also a discussion on the topic in Rice and Rosenblatt \cite{RR83}, for a fixed design model.\\
Nonparametric estimation of the regression function derivative has been studied following different methods, relying on kernels, local polynomial regression, regression by smoothing splines, or difference quotients. We emphasize that the strategy for fixed design context, where $X_i$ are replaced by $x_i = i/n$, relies on dedicated methods. Indeed, differences $Y_i - Y_{i-1}$ bring information on $b'$, which is not the case for random design on non compact support. Kernel estimation of the regression function and its derivative is for instance studied by Gasser and M\"{u}ller \cite{GasserMuller}, in the fixed design case. In local polynomial regression, the derivative can be estimated by the coefficient of the derivative of the local polynomial regression fitted at given point $x$, as summarized in Tsybakov \cite{TSYBAKOV09}, Chapter 1, see also Huang and Chan~\cite{HuangChan}. Stone \cite{STONE82} showed that derivative  estimation with splines can achieve the optimal ${\mathbb L}^2$-rate  of convergence (proved in Stone~\cite{STONE80}) under mild assumptions. Further asymptotic properties are obtained by Zhou and Wolfe \cite{ZhouWolfe}, mainly in  the  fixed  design  setting on compact support: they rely on splines estimators, arguing that they avoid boundary problems of kernel estimators. Note that extensions to functional regressors have been conducted (see Hall {\it et al.} \cite{HMY}).\\
The  smoothing parameter selection problem remained unanswered in the first papers. For kernel strategies, the bandwidth choice for the derivative estimator (based on a factor rule) is discussed in Fan and Gijbels \cite{FanGij}, but not studied from theoretical point of view. Liu and Brabanter \cite{LiuBra} propose a methodology which generalizes the difference quotient based estimator of the first order derivative to the random design setting, when $X$ follows a uniform distribution on $[0,1]$. They also discuss bandwidth selection in their setting. Lastly, we mention that an adaptive method in wavelet bases is studied in Chesneau \cite{Chesneau14}, but it involves an estimate of $f$. As a consequence, the rate of estimation depends on the regularity of this function, which we want to avoid.

\subsection{Contributions of the paper}
In the present work, we consider a projection method and propose an estimator as a finite development in an orthonormal ${\mathbb L}^2$ basis with $m$ coefficients. We start from the least-square estimator studied by Baraud for the fixed design model in \cite{BARAUD00} and the random design model in \cite{BARAUD02}. These works  consider compactly supported bases and assume that the density of the $X_i$ is lower bounded on the interval of estimation. The lower bound on the density is involved in the upper bound on the risk. These results have been extended to non compactly supported bases by Comte and Genon-Catalot~\cite{CGC20}; then, the assumption that the density is lower bounded can not be done, and the problem has to be handled differently. In some sense, regression function estimation in this setting has some characteristics of inverse problems. 

Here, we show that two strategies can be considered to deduce from the least square estimator of $b$, an estimator of $b'(.)$, and these strategies do not coincide in general. We prove non asymptotic bounds on the integrated ${\mathbb L}^2-$risk of the estimators, for both strategies. The fact that our results are non asymptotic and global (and not pointwise), make them different from the literature mentioned previously. To our knowledge, these are the first results allowing for non compactly supported bases in the definition of the estimators. In the case of a trigonometric basis and compact support estimation, we recover the optimal rates given in Stone \cite{STONE80} under weak assumptions. We also obtain specific rates in the non compact Hermite basis setting. Therefore, our results contain previous ones, and extensions. 
Last but not least, we propose a model selection strategy relying on a Goldenshluger and Lepski \cite{GL2011} method and prove a risk bound for the final estimator: this result holds for sub-gaussian (bounded or gaussian) errors and implies that the estimator automatically reaches the optimal rate on regularity spaces, without requiring the knowledge of the regularity index of $b$. We discuss our assumptions, which remain rather weak.
\\
\\
The plan of the paper is the following. We define our notation and estimators in Section \ref{section_estimators_assumptions}. In Section \ref{Risk_bounds}, we present our assumptions and prove two simple risk bounds allowing to compare our estimators. More elaborate bounds under a stability assumption (see Cohen {\it et al.} \cite{CDL13, CDL19}) are also provided.  Bases and spaces on which we can illustrate our assumptions and first results are described in Section \ref{section_bases}. They are of compact (trigonometric basis) or non compact (HErmite basis) type, and we discuss the rates reached by our estimators. They are the optimal ones in the compact case. Section \ref{GLAdapt} is dedicated to the adaptive procedure: we prove a risk bound and deduce corollaries about adaptive rates. The possibility of non compact support makes the problem difficult, and even if the estimator seems to follow a standard Goldenshluger and Lepski \cite{GL2011} scheme, the proofs are delicate, due to an additional bias term. A numerical study shows that the collection of estimators contains relevant proposals and that the data driven  estimator works in a satisfactory way, especially compared to the derivative of a Nadaraya-Watson estimator. 
%


%
\section{Definition of the estimators}\label{section_estimators_assumptions}
Let $\mathcal B = (\varphi_j)_{j\in\mathbb N\backslash\{0\}}$ be a Hilbert basis of $\mathbb L^2(I,dx)$ with $I\subseteqq \mathbb R$ an interval. For the sake of readability, for every $j\in\mathbb N\backslash\{0\}$, the function $x\in\mathbb R\mapsto\varphi_j(x)\mathbf 1_I(x)$ is also denoted by $\varphi_j$. The following mean squares estimator   of $b_I = b\mathbf 1_I$, which is studied in Baraud \cite{BARAUD02} and in Comte and Genon-Catalot \cite{CGC20}, is defined by 
\begin{displaymath}
\widehat b_m(x) :=
\sum_{j = 1}^{m}[\widehat\theta_{m}^{1}]_j\varphi_j(x)
\textrm{ $;$ }
x\in I,
\end{displaymath}
where $m\in\mathbb N\backslash\{0\}$,
\begin{displaymath}
\widehat\theta_{m}^{1} =
\widehat\theta_{m}^{1} (\mathbf X,\mathbf Y) :=
\frac{1}{n}\widehat\Psi_{m}^{-1}\widehat\Phi_{m}^{*}\mathbf Y,
\end{displaymath}
$M^*$ denotes the transpose of $M$, $\mathbf Y := (Y_1,\dots,Y_n)^*$, $\mathbf X := (X_1,\dots,X_n)^*$, $\widehat\Phi_m := (\varphi_j(X_i))_{1\leqslant i\leqslant n, 1\leqslant j\leqslant m}$ and
\begin{displaymath}
\widehat\Psi_m :=
\frac{1}{n}\widehat\Phi_{m}^{*}\widehat\Phi_m =
(\langle\varphi_j,\varphi_k\rangle_n)_{1\leqslant j,k\leqslant m }
\end{displaymath}
with
\begin{displaymath}
\langle\varphi,\psi\rangle_n :=
\frac{1}{n}\sum_{i = 1}^{n}\varphi(X_i)\psi(X_i)
\end{displaymath}
for every $\varphi,\psi :\mathbb R\rightarrow\mathbb R$. The map $(\varphi,\psi)\mapsto\langle\varphi,\psi\rangle_n$ is the empirical scalar product, and the associated norm is denoted by $\|.\|_n$ in the sequel. The theoretical analogue on $\mathbb L^2(\mathbb R,f(x)dx)$ is
\begin{displaymath}
(\varphi,\psi)\longmapsto\langle\varphi,\psi\rangle_f
:=\int_I\varphi(z)\psi(z)f(z)dz,
\end{displaymath}
and the associated norm is denoted by $\|.\|_f$. Notice that ${\mathbb E}( \langle\varphi,\psi\rangle_n ) = \langle\varphi,\psi\rangle_f$. The reader can refer to Baraud \cite{BARAUD00,BARAUD02}, Cohen et al. \cite{CDL13,CDL19}, and Comte and Genon-Catalot \cite{CGC20} for risk bounds on $\widehat b_m$ and an adaptive estimator.
\\
\\
\noindent {\bf Strategy 1.} On the one hand, a natural estimator of $b_I'$ is
\begin{equation}\label{first_estimator}
\widehat b_{m}^{\prime,1}(x) :=
\sum_{j = 1}^{m}[\widehat\theta_{m}^{1}]_j\varphi_j'(x)
\end{equation}
with $m\in\mathbb N\backslash\{0\}$. Obviously,
\begin{displaymath}
(\widehat b_{m}^{\prime,1}(X_1),\dots,\widehat b_{m}^{\prime,1}(X_n))^* =
\widehat\Phi_m' \widehat\theta_{m}^{1} =
\frac{1}{n}
\widehat\Phi_m'\widehat\Psi_{m}^{-1}\widehat\Phi_{m}^{*}\mathbf Y
\end{displaymath}
with $\widehat\Phi_m' := (\varphi_j'(X_i))_{i,j}$. This requires to choose a regular basis. Note that, contrary to what may occur for the density estimator,  this way is simpler than derivating the Nadaraya-Watson kernel based estimator as done in Bercu {\it et al.} \cite{BCD19}. Indeed, the latter involves the derivative of a quotient of two functions.
\\
\\
\noindent {\bf Strategy 2.} On the other hand, when $(b\varphi_j)(\inf(I)) = (b\varphi_j)(\sup(I))$ for every $j\in\{1,\dots,m\}$, $ \langle b',\varphi_j\rangle =-\langle b,\varphi'_j\rangle$ and the orthogonal projection $(b')_m$ of $b'$ on $\mathcal S_m :=\textrm{span}\{\varphi_1,\dots,\varphi_m\}$ in $\mathbb L^2(I,dx)$ satisfies
\begin{displaymath}
(b')_m(x) =
-\sum_{j = 1}^{m}\langle b,\varphi_j'\rangle\varphi_j(x).
\end{displaymath}
Several of the basis we have in mind are such that the derivative of $\varphi_j$ can be expressed as a finite linear combination of the other $\varphi_k$'s. Thus, if there exist (known) coefficients $d_{j,k}$ such that $\varphi_j'=\sum_{k=1}^{m} d_{j,k}\varphi_k$, then 
$$\langle b,\varphi_j'\rangle = \sum_{k=0}^m d_{j,k}\langle b,\varphi_k\rangle.$$
A simple plug-in strategy leads thus to propose an estimate of $\langle b,\varphi_j'\rangle$ by replacing $\langle b,\varphi_k\rangle$ in the above formula by $[\widehat\theta_m^1]_k$.
 
In other words, if there exists $\Delta_{m,m + p}\in\mathcal M_{m,m + p}(\mathbb R)$ such that $\widehat\Phi_m' =\widehat\Phi_{m + p}\Delta_{m,m + p}^{*}$, one can consider a projection estimator of the derivative instead of derivating the projection estimator of $b$:
\begin{equation}\label{second_estimator}
\widehat b_{m}^{\prime,2}(x) :=
\sum_{j = 1}^{m}[\widehat{\theta}_{m}^{2}]_j\varphi_j(x)
\end{equation}
with
\begin{displaymath}
\widehat\theta_{m}^{2} =
-\frac{1}{n}\Delta_{m,m + p}\widehat\Psi_{m + p}^{-1}\widehat\Phi_{m + p}^{*}\mathbf Y.
\end{displaymath}
Obviously,
\begin{displaymath}
(\widehat b_{m}^{\prime,2}(X_1),\dots,\widehat b_{m}^{\prime,2}(X_n))^* =
-\frac{1}{n}
\widehat\Phi_{m + p}
\Delta_{m,m + p}
\widehat\Psi_{m + p}^{-1}\widehat\Phi_{m + p}^{*}\mathbf Y.
\end{displaymath}
We shall see in this paper that the two strategies are different and we will provide risk bounds that allow to compare the two methods.

\section{Risk bounds}\label{Risk_bounds}

%
\subsection{Notations and useful elementary properties:}\label{section_notations}
\begin{itemize}
 \item The operator norm of a matrix $M$ is defined by $\|M\|_{\textrm{op}}^{2} :=\lambda_{\max}(MM^*)$, where we recall that $M^*$ is the transpose of $M$ and $\lambda_{\max}(MM^*)$ is the largest eigenvalue of the square matrix $MM^*$, which are nonnegative. Note that for a square, symmetric and nonnegative matrix $A$, $\|A\|_{\textrm{op}} = \lambda_{\max}(A)$. Note also that if $A$ and $B$ are two matrices such that $AB$ and $BA$ are well defined, then $\lambda_{\max}(AB) =\lambda_{\max}(BA)$. Finally, note that if $A$ and $B$ are two square, symmetric and nonnegative matrices, then $\textrm{Tr}(AB)\leqslant\|A\|_{{\rm op}}\textrm{Tr}(B) =\lambda_{\max}(A)\textrm{Tr}(B)$, where $\textrm{Tr}(M)$ denotes the trace of a (square) matrix $M$.
 \item The Frobenius norm of a matrix $M$ is defined by
 \begin{displaymath}
 \|M\|_{F}^{2} :=\textrm{Tr}(MM^*) =\textrm{Tr}(M^*M).
 \end{displaymath}
 \item The natural scalar product on $\mathbb L^2(I,f(x)dx)$, also called $f$-weighted scalar product, is denoted by $\langle .,.\rangle_f$, and the associated norm by $\|.\|_f$.
 \item For every $\psi\in\mathbb L^2(I,dx)$, its orthogonal projection on $\mathcal S_m =\{\varphi_1,\dots,\varphi_m\}$ in $\mathbb L^2(I,dx)$ is denoted by $\psi_m$.
\end{itemize}
%


%

%


%
\subsection{Preliminary rough risk bounds on $\widehat b_{m}^{\prime,1}$ and $\widehat b_{m}^{\prime,2}$}
In the sequel, we assume that $b'$ exists and is square integrable on $I$, and that the density function $f$ fulfills the following assumption.
%


%
\begin{assumption}\label{assumption_f}
The density function $f$ is bounded on $I$.
\end{assumption}
\noindent
First, we provide the following rough but general risk bound on $\widehat b_{m}^{\prime,1}$.
%


%
\begin{proposition}\label{rough_risk_bound_estimator_1}
Under Assumption \ref{assumption_f},
\begin{eqnarray*} 
 \mathbb E\left[\|\widehat b_{m}^{\prime,1} - b'\|_{n}^{2}\right]
 & \leqslant &
 3\|f\|_{\infty}\inf_{t\in\mathcal S_m}\|t - b'\|^2 +
 3\mathbb E\left[\|\widehat\Phi'_m(\widehat\Phi_m^*\widehat\Phi_m)^{-1}\widehat\Phi_m^*\|_{{\rm op}}^{2}
 \|b - b_m\|_{n}^{2}\right] +
 3\|b_m' - (b')_m\|_{f}^{2}\\
 & & \hspace{3cm} +
 \frac{\sigma^2}{n}
 \mathbb E\left[{\rm Tr}
 \left((\widehat\Phi_m^*\widehat \Phi_m)^{-1}(\widehat\Phi_m')^*\widehat\Phi_m'\right)\right],
\end{eqnarray*}
where $b_m$ is the ${\mathbb L}^2(I,dx)$-orthogonal projection of $b$ on ${\mathcal S}_m$, and $b_m'$ is its derivative, while $(b')_m$ is the ${\mathbb L}^2(I,dx)$-orthogonal projection of $b'$ on ${\mathcal S}_m$.
\end{proposition}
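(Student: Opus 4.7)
The plan is to decompose the evaluation vector of $\widehat b_m^{\prime,1}$ at the design points into three bias contributions plus a noise contribution, then take expectations, using independence between $\boldsymbol{\varepsilon}$ and $\mathbf X$ to separate variance from bias. Set $\mathbf A := \widehat\Phi_m'(\widehat\Phi_m^*\widehat\Phi_m)^{-1}\widehat\Phi_m^*$ so that, from the expression of $\widehat\theta_m^1$, the vector of estimator values at the design points equals $\mathbf A\mathbf Y$. Write $g_\bullet := (g(X_1),\dots,g(X_n))^*$ for a generic real function $g$. The key algebraic observation is that $(b_m)_\bullet = \widehat\Phi_m\theta_b$ with $\theta_b = (\langle b,\varphi_j\rangle)_{1\le j\le m}$, so the factor $(\widehat\Phi_m^*\widehat\Phi_m)^{-1}$ cancels and
\[
\mathbf A(b_m)_\bullet \;=\; \widehat\Phi_m'\theta_b \;=\; (b_m')_\bullet.
\]
This identity forces $b_m$, and hence its derivative $b_m'$, to appear in the decomposition. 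Substituting $\mathbf Y = b_\bullet + \boldsymbol{\varepsilon}$ and inserting $\pm(b_m')_\bullet \pm ((b')_m)_\bullet$, one obtains
\[
\mathbf A\mathbf Y - b'_\bullet \;=\; \mathbf A\bigl((b-b_m)_\bullet\bigr) \;+\; \bigl(b_m' - (b')_m\bigr)_\bullet \;+\; \bigl((b')_m - b'\bigr)_\bullet \;+\; \mathbf A\boldsymbol{\varepsilon},
\]
where the first three contributions are $\mathbf X$-measurable while the last is linear in $\boldsymbol{\varepsilon}$ conditionally on $\mathbf X$.

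Taking $\mathbb E[\|\cdot\|_n^2] = \tfrac{1}{n}\mathbb E[\|\cdot\|^2]$ of the squared norm and conditioning on $\mathbf X$, the centeredness and independence of $\boldsymbol{\varepsilon}$ kill the cross-terms between the $\mathbf X$-measurable part and $\mathbf A\boldsymbol{\varepsilon}$, while the variance piece becomes $\mathbb E[\boldsymbol{\varepsilon}^*\mathbf A^*\mathbf A\boldsymbol{\varepsilon}\mid\mathbf X] = \sigma^2\,\mathrm{Tr}(\mathbf A^*\mathbf A)$. A cyclic-trace computation using $\widehat\Phi_m^*\widehat\Phi_m\cdot(\widehat\Phi_m^*\widehat\Phi_m)^{-1} = I_m$ collapses $\mathrm{Tr}(\mathbf A^*\mathbf A)$ to $\mathrm{Tr}\bigl((\widehat\Phi_m^*\widehat\Phi_m)^{-1}(\widehat\Phi_m')^*\widehat\Phi_m'\bigr)$, which, divided by $n$, is exactly the fourth term of the bound.

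Finally, bound the three-term bias by $(u+v+w)^2 \le 3(u^2+v^2+w^2)$. The operator-norm inequality $\|\mathbf A v\|^2 \le \|\mathbf A\|_{\mathrm{op}}^2\|v\|^2$ turns the first piece into $3\,\mathbb E\bigl[\|\mathbf A\|_{\mathrm{op}}^2\|b-b_m\|_n^2\bigr]$. For the two remaining pieces, the evaluated function is deterministic in the sample, so
\[
\mathbb E\!\left[\tfrac{1}{n}\|g_\bullet\|^2\right] = \mathbb E\bigl[g(X_1)^2\bigr] = \|g\|_f^2,
\]
which yields $3\|b_m'-(b')_m\|_f^2$ directly for the second piece; for the third piece, the extra inequality $\|g\|_f^2 \le \|f\|_\infty\|g\|^2$ (Assumption \ref{assumption_f}) combined with the identity $\|(b')_m - b'\|^2 = \inf_{t\in\mathcal S_m}\|t-b'\|^2$ completes the argument. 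The only step requiring a little care is the cyclic-trace reduction in the variance analysis; everything else is a routine consequence of the cancellation $\mathbf A(b_m)_\bullet = (b_m')_\bullet$ and the independence of $\boldsymbol{\varepsilon}$ from $\mathbf X$.
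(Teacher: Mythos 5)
Your proposal is correct and follows essentially the same route as the paper: the same conditional bias--variance split given $\mathbf X$, the same key cancellation $\widehat\Phi_m'(\widehat\Phi_m^*\widehat\Phi_m)^{-1}\widehat\Phi_m^* b_m(\mathbf X) = b_m'(\mathbf X)$, the same cyclic-trace reduction of the variance, and the same three-term bias bound via $(u+v+w)^2\leqslant 3(u^2+v^2+w^2)$ with $\|\cdot\|_f^2\leqslant\|f\|_\infty\|\cdot\|^2$ for the projection error.
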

\noindent
Let us comment the four terms in the previous bound:
\begin{enumerate}
 \item The first term is the bias term we could expect. It can be evaluated on regularity spaces. Without Assumption \ref{assumption_f}, this terms can be replaced by $\inf_{t\in\mathcal S_m}\|t - b'\|_{f}^{2}$.
 \item The second term involves the bias related to $b$, which would be negligible compared to the previous one; but it is multiplied by a coefficient which has an order depending on $m$ and will at least compensate the improvement.
 \item The third term can be evaluated in the different bases: the procedure makes sense if the derivative of the projection and the projection of the derivative are close, for fixed $m$. Under Assumption \ref{assumption_f}, it is less than $3\|f\|_{\infty}\|b_m' - (b')_m\|^2$, null in trigonometric spaces with odd dimensions, and of order less or equal than the first term in Laguerre or Hermite bases (see Proposition \ref{additional_term_risk_bound_estimator_1}).
 \item The last term is the variance term, and it is established in Proposition \ref{increase_variance} that it increases with $m$ as expected.
\end{enumerate}
%


%
\begin{proposition}\label{increase_variance}
The map $m\mapsto {\mathbb E}\left[{\rm Tr}\left((\widehat\Phi_m^*\widehat \Phi_m)^{-1}(\widehat\Phi_m')^*\widehat\Phi_m'\right)\right]$ is increasing.
\end{proposition}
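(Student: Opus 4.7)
The plan is to prove the pointwise (almost sure) monotonicity $\textrm{Tr}(A_{m+1}^{-1}B_{m+1})\geqslant \textrm{Tr}(A_m^{-1}B_m)$, where I abbreviate $A_m:=\widehat\Phi_m^*\widehat\Phi_m$ and $B_m:=(\widehat\Phi_m')^*\widehat\Phi_m'$; the monotonicity of the expectation then follows by integrating. I would work on the (almost sure) event where $\widehat\Phi_{m+1}$ has full column rank, which is precisely the regime in which the expectation in the statement is finite.

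I would exploit the block decomposition induced by appending one column. With $v:=(\varphi_{m+1}(X_i))_{1\leqslant i\leqslant n}$ and $w:=(\varphi_{m+1}'(X_i))_{1\leqslant i\leqslant n}$, one has
\begin{equation*}
A_{m+1}=\begin{pmatrix}A_m & a\\ a^* &\alpha\end{pmatrix},\qquad B_{m+1}=\begin{pmatrix}B_m & b\\ b^* &\beta\end{pmatrix},
\end{equation*}
where $a=\widehat\Phi_m^*v$, $\alpha=\|v\|^2$, $b=(\widehat\Phi_m')^*w$, $\beta=\|w\|^2$. The Schur complement of $A_m$ in $A_{m+1}$ is $s:=\alpha-a^*A_m^{-1}a=\|v-P_m v\|^2$, where $P_m:=\widehat\Phi_m A_m^{-1}\widehat\Phi_m^*$ is the orthogonal projector onto the column space of $\widehat\Phi_m$; note $s>0$ a.s.\ by the full-rank assumption. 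The standard block-inverse formula then yields an explicit expression for $A_{m+1}^{-1}$ in terms of $A_m^{-1}$, $a$ and $s$.

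Plugging this into $A_{m+1}^{-1}B_{m+1}$, computing the trace block by block, and using the cyclic property of the trace together with the substitution $u:=A_m^{-1}a$, I expect to obtain the identity
\begin{equation*}
\textrm{Tr}(A_{m+1}^{-1}B_{m+1})-\textrm{Tr}(A_m^{-1}B_m)=\frac{1}{s}\bigl(u^*B_m u-2b^*u+\beta\bigr).
\end{equation*}
The decisive observation is then to recognise the bracket as a squared norm: since $u^*B_m u=\|\widehat\Phi_m'u\|^2$, $b^*u=w^*\widehat\Phi_m'u$ and $\beta=\|w\|^2$, it equals $\|\widehat\Phi_m'u-w\|^2\geqslant 0$. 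Hence the increment is non-negative almost surely, and taking expectation gives the claim.

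The main obstacle is simply the bookkeeping of the Schur-complement computation, which is routine; geometrically the identity is clean, since adding one coordinate only contributes a non-negative squared-distance term weighted by $1/s$. A minor subtlety is the handling of the null event where $A_m$ fails to be invertible, which can be neglected on the event of full column rank used to define the expectation (or handled by a Moore--Penrose pseudo-inverse convention).
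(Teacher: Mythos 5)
Your proof is correct, but it follows a genuinely different route from the paper. The paper does not touch block inverses at all: it identifies (conditionally on $\mathbf X$) the quantity $\frac{\sigma^2}{n}{\rm Tr}\left[(\widehat\Phi_m^*\widehat\Phi_m)^{-1}(\widehat\Phi_m')^*\widehat\Phi_m'\right]$ with $\mathbb E_{\mathbf X}\bigl(\sup_{t\in\mathcal S_m:\|t\|_n=1}\langle\varepsilon,t'\rangle_n^2\bigr)$, by expanding $t$ in the $\widehat\Psi_m^{-1/2}$-orthonormalised basis and computing the supremum of the squared linear form exactly; monotonicity in $m$ is then immediate from the nestedness of the spaces $\mathcal S_m\subset\mathcal S_{m+1}$, and the unconditional expectation inherits it. Your argument instead proves the pointwise inequality ${\rm Tr}(A_{m+1}^{-1}B_{m+1})\geqslant{\rm Tr}(A_m^{-1}B_m)$ by the Schur-complement block-inverse formula, and the computation checks out: the increment is exactly
\begin{equation*}
{\rm Tr}(A_{m+1}^{-1}B_{m+1})-{\rm Tr}(A_m^{-1}B_m)=\frac{\|\widehat\Phi_m'u-w\|^2}{\|v-P_mv\|^2}\geqslant 0,\qquad u=A_m^{-1}\widehat\Phi_m^*v,
\end{equation*}
so the conclusion follows by taking expectations. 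What each approach buys: yours is purely linear-algebraic and self-contained, and it gives an explicit, quantitative expression for the increment, at the price of the block bookkeeping and of making the full-column-rank event explicit (a condition the paper also needs implicitly, since $\widehat\Psi_m^{-1}$ must exist for the estimator to be defined); the paper's variational representation avoids any matrix-inversion algebra, makes the monotonicity a one-line consequence of nested models, and reuses the same device later (the same representation underlies the proof that $m\mapsto\widehat V(m)$ is increasing in Lemma \ref{increase_penalty} and the variance computation in the Talagrand argument), so it integrates more naturally with the rest of the paper.
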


To sum up, we will have to make a compromise between decreasing bias term (1) and increasing variance term (4), with the specific difficulty related to nuisance terms (2) and (3).

Now, we turn to the estimator $\widehat b_{m}^{\prime,2}$ and assume that there exists $p\in\mathbb N$ such that $\varphi_1,\dots,\varphi_m$ fulfill the following assumption:
%


%
\begin{assumption}[$m,p$]\label{assumption_basis_derivatives}
For every $j\in\{1,\dots,m\}$, $\varphi_j'\in\mathcal S_{j + p}$.
\end{assumption}
\noindent
Note that Assumption \ref{assumption_basis_derivatives}($m,p$) implies that there exists $\Delta_{m,m + p}\in\mathcal M_{m,m + p}(\mathbb R)$ such that
\begin{equation}\label{defDelta}
\widehat\Phi_m' =\widehat\Phi_{m + p}\Delta_{m,m + p}^{*}.
\end{equation}
Trigonometric, Laguerre, Hermite and Legendre bases satisfy Assumption \ref{assumption_basis_derivatives}($m,p$), see Section \ref{section_bases}. More precisely, we have $p = 0$ for the Laguerre and Legendre bases and $\Delta_{m,m}$ is a lower triangular square matrix. We have $p = 1$ for Hermite and trigonometric bases with $\Delta_{m,m + 1}(j,k) = 0$ for $k\geqslant j + p$. For the trigonometric basis with an odd dimension, we can keep a square link $\Delta_{m,m}$ with a null first line followed by diagonal $2\times 2$ blocks of type
\begin{displaymath}
\begin{pmatrix}
 0 & -2\pi j\\
 2\pi j & 0
\end{pmatrix}.
\end{displaymath}
Assume that $b$ and $\varphi_1,\dots,\varphi_m$ fulfill also the following assumption.
%


%
\begin{assumption}[$m$]\label{assumption_bounds_I}
For every $j\in\{1,\dots,m\}$,
\begin{displaymath}
b(\underline{{\tt a}})\varphi_j(\underline{{\tt a}}) = b(\overline{{\tt a}})\varphi_j(\overline{{\tt a}}),
\end{displaymath}
where $\underline{{\tt a}} :=\inf(I)$ and $\overline{{\tt a}} :=\sup(I)$.
\end{assumption}
\noindent
Note that, for instance, Assumption \ref{assumption_bounds_I}($m$) holds for every $m\in\mathbb N$ when $b(\underline{{\tt a}}) = b(\overline{{\tt a}}) = 0$. Under this additional condition, by the integration by parts formula,
\begin{equation}\label{IPP}
\langle b',\varphi_j\rangle = -\langle b,\varphi_j'\rangle
\textrm{ $;$ }
\forall j\in\{1,\dots,m\}.
\end{equation}
So,
\begin{equation}\label{projection_derivative}
(b')_m(\mathbf X) =
-\sum_{j = 1}^{m}
\langle b,\varphi_j'\rangle\varphi_j(\mathbf X) =
-\widehat\Phi_m\Delta_{m,m+p}\left(\langle b,\varphi_j\rangle\right)_{1\leqslant j\leqslant m + p},
\end{equation}
which legitimates the definition (\ref{second_estimator}) of the alternative estimator $\widehat b_{m}^{\prime,2}$ of $b'$. Let us establish a risk bound for this estimator.
%


%
\begin{proposition}\label{rough_risk_bound_estimator_2}
Under Assumptions \ref{assumption_f}, \ref{assumption_basis_derivatives}($m,p$) and \ref{assumption_bounds_I}($m$),
\begin{eqnarray*} 
 \mathbb E\left[\|\widehat b_{m}^{\prime,2} - b'\|_{n}^{2}\right]
 & \leqslant &
 2\|f\|_{\infty}\inf_{t\in\mathcal S_m}\|t - b'\|^2
 + 2\mathbb E\left[
 \|\widehat\Phi_m\Delta_{m,m + p}(\widehat\Phi_{m + p}^{*}\widehat\Phi_{m + p})^{-1}
 \widehat\Phi_{m + p}^{*}\|_{{\rm op}}^2
 \|b - b_{m + p}\|_{n}^{2}\right]\\
 & &
 \hspace{3cm} +\frac{\sigma^2}{n}\mathbb E\left[{\rm Tr}\left((\widehat\Phi_{m + p}^{*}
 \widehat\Phi_{m + p})^{-1}\Delta_{m,m + p}^{*}
 \widehat\Phi_{m}^{*}\widehat\Phi_{m}\Delta_{m,m + p}\right)\right].
\end{eqnarray*}
\end{proposition}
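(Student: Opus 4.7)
My plan is to decompose $\widehat{b}_m^{\prime,2}(\mathbf{X}) - b'(\mathbf{X})$ into three pieces — the projection error of $b'$ on $\mathcal{S}_m$, a ``tail'' bias coming from $b - b_{m+p}$ passed through the linear estimator, and the pure noise term — and then bound each using conditional expectation on $\mathbf{X}$ together with standard operator-norm / trace manipulations. The algebraic engine is the identity that the noise-free part of the estimator, when applied to $b_{m+p}$, produces exactly $(b')_m(\mathbf{X})$, so the residual bias in $b$ only sees $b-b_{m+p}$ rather than $b-b_m$.

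Concretely, write $T(v):=-\tfrac{1}{n}\widehat{\Phi}_m\Delta_{m,m+p}\widehat{\Psi}_{m+p}^{-1}\widehat{\Phi}_{m+p}^{*}v$ for $v\in\mathbb{R}^n$. From $\mathbf{Y}=b(\mathbf{X})+\varepsilon$ one gets $\widehat{b}_m^{\prime,2}(\mathbf{X})=T(b(\mathbf{X}))+T(\varepsilon)$. Since $b_{m+p}=\sum_{k=1}^{m+p}\langle b,\varphi_k\rangle\varphi_k$, we have $b_{m+p}(\mathbf{X})=\widehat{\Phi}_{m+p}(\langle b,\varphi_k\rangle)_{k}$, and a one-line matrix computation (cancelling $\widehat{\Psi}_{m+p}^{-1}\widehat{\Phi}_{m+p}^{*}\widehat{\Phi}_{m+p}/n$) gives
$$T(b_{m+p}(\mathbf{X}))=-\widehat{\Phi}_m\Delta_{m,m+p}(\langle b,\varphi_k\rangle)_{1\leqslant k\leqslant m+p}=(b')_m(\mathbf{X})$$
by formula (\ref{projection_derivative}). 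Splitting $b(\mathbf{X})=b_{m+p}(\mathbf{X})+(b-b_{m+p})(\mathbf{X})$ then yields the decomposition
$$\widehat{b}_m^{\prime,2}(\mathbf{X})-b'(\mathbf{X})=\bigl[(b')_m(\mathbf{X})-b'(\mathbf{X})\bigr]+T((b-b_{m+p})(\mathbf{X}))+T(\varepsilon).$$

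Conditioning on $\mathbf{X}$ and using that $\varepsilon$ is centered and independent of $\mathbf{X}$, the cross-terms involving $T(\varepsilon)$ vanish, so
$$\mathbb{E}\bigl[\|\widehat{b}_m^{\prime,2}-b'\|_n^2\mid\mathbf{X}\bigr]=\|(b')_m-b'+T((b-b_{m+p})(\mathbf{X}))\|_n^2+\mathbb{E}\bigl[\|T(\varepsilon)\|_n^2\mid\mathbf{X}\bigr].$$
Applying $(u+v)^2\leqslant 2u^2+2v^2$ to the first term splits it into two pieces: $2\|(b')_m-b'\|_n^2$, whose expectation is $2\|(b')_m-b'\|_f^2\leqslant 2\|f\|_\infty\inf_{t\in\mathcal{S}_m}\|t-b'\|^2$ under Assumption \ref{assumption_f}; and $2\|T((b-b_{m+p})(\mathbf{X}))\|_n^2$, which the operator-norm inequality together with $\|v\|_n^2=\|v\|_2^2/n$ bounds by $2\|\widehat{\Phi}_m\Delta_{m,m+p}(\widehat{\Phi}_{m+p}^{*}\widehat{\Phi}_{m+p})^{-1}\widehat{\Phi}_{m+p}^{*}\|_{{\rm op}}^2\,\|b-b_{m+p}\|_n^2$. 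For the noise piece, viewing $T$ as a matrix $A$ gives $\mathbb{E}[\|T(\varepsilon)\|_n^2\mid\mathbf{X}]=\tfrac{\sigma^2}{n}{\rm Tr}(A^{*}A)$; applying the cyclic trace property and the identity $\widehat{\Phi}_{m+p}^{*}\widehat{\Phi}_{m+p}=n\widehat{\Psi}_{m+p}$ collapses this trace to $\tfrac{\sigma^2}{n}{\rm Tr}((\widehat{\Phi}_{m+p}^{*}\widehat{\Phi}_{m+p})^{-1}\Delta_{m,m+p}^{*}\widehat{\Phi}_m^{*}\widehat{\Phi}_m\Delta_{m,m+p})$, matching the stated variance term. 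Taking expectation in $\mathbf{X}$ assembles the three bounds.

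The only step that needs care is the identification $T(b_{m+p}(\mathbf{X}))=(b')_m(\mathbf{X})$: this is where both Assumption \ref{assumption_basis_derivatives}($m,p$) (to express $\varphi_j'$ via $\Delta_{m,m+p}$, which forces the reference space to be $\mathcal{S}_{m+p}$ rather than $\mathcal{S}_m$) and Assumption \ref{assumption_bounds_I}($m$) (via the integration by parts (\ref{IPP})) are essential. Everything else — the decomposition, the conditioning, and the variance/trace calculation — is routine and parallels the analogous steps in the proof of Proposition \ref{rough_risk_bound_estimator_1}.
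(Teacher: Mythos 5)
Your proposal is correct and follows essentially the same route as the paper: a conditional bias–variance decomposition given $\mathbf X$, the key identification (via (\ref{projection_derivative}) and the cancellation $\widehat\Psi_{m+p}^{-1}\widehat\Phi_{m+p}^{*}\widehat\Phi_{m+p}/n=\mathbf I$, which is the paper's identity (\ref{rough_risk_bound_estimator_1_1})) that the noise-free estimator applied to $b_{m+p}(\mathbf X)$ equals $(b')_m(\mathbf X)$, the factor-$2$ split bounding $\mathbb E\|(b')_m-b'\|_n^2=\|(b')_m-b'\|_f^2\leqslant\|f\|_\infty\inf_{t\in\mathcal S_m}\|t-b'\|^2$ and the residual by the operator norm times $\|b-b_{m+p}\|_n^2$, and the cyclic-trace computation for the variance. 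No gaps.
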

\noindent Now we have elements to compare the two estimators.

\noindent {\bf Comparison of the two estimators.} Note that for $p = 0$, this bound is almost the same as in Proposition \ref{rough_risk_bound_estimator_1}, except that the undesirable term $\|b_m' - (b')_m\|_{f}^{2}$ no longer appears. The counterpart is that the result of Proposition \ref{rough_risk_bound_estimator_2} requires the additional Assumptions \ref{assumption_basis_derivatives} and \ref{assumption_bounds_I}. Thanks to Proposition \ref{additional_term_risk_bound_estimator_1} (see Section \ref{section_bases} for details):
\begin{itemize}
 \item In the specific case of the trigonometric basis, the additional term $\|b_m' - (b')_m\|_{f}^{2}$ in the bound of Proposition \ref{rough_risk_bound_estimator_1} is null, and the first estimator requires less assumptions, so the first strategy is better.
 \item In the case of the Hermite basis, Assumption \ref{assumption_basis_derivatives}($m,p$), $p=1$ and Assumption \ref{assumption_bounds_I}($m$) are automatically fulfilled. However, it is difficult to determine which strategy is better.
 \item In the Laguerre basis, Assumption \ref{assumption_bounds_I}($m$) is satisfied for all $m$ if $b(0)=0$. If this holds, it follows from Proposition \ref{additional_term_risk_bound_estimator_1} (iii) that both strategies give the same rate.
 \item In the case of the Legendre basis, the additional term $\|b_m' - (b')_m\|_{f}^{2}$ is likely to be large, so the second strategy should be preferred.
\end{itemize}
%


%
\subsection{Elaborate risk bounds on $\widehat b_{m}^{\prime,1}$ and $\widehat b_{m}^{\prime,2}$}
First of all, under Assumption \ref{assumption_f}, let us consider
\begin{displaymath}
\Psi_m :=\mathbb E(\widehat\Psi_m) = (\langle\varphi_j,\varphi_k\rangle_f)_{j,k}.
\end{displaymath}
Assume also that it fulfills the following assumption called "stability assumption" by Cohen {\it et al.} \cite{CDL13}.
%


%
\begin{assumption}[\textit m]\label{assumption_Psi}
The matrix $\Psi_m$ satisfies
\begin{displaymath}
\mathfrak L(m) :=
\sup_{x\in I}\sum_{j = 1}^{m}\varphi_j(x)^2 <\infty
\quad
\textrm{and}\quad
\mathfrak L(m)(\|\Psi_{m}^{-1}\|_{{\rm op}}\vee 1)\leqslant\frac{\mathfrak c}{2}\cdot\frac{n}{\log(n)},
\end{displaymath}
where $\mathfrak c = (3\log(3/2) - 1)/9$.
\end{assumption}
\noindent
Since the $\varphi_j$'s do not depend on $m$, the $\mathcal S_m$'s are nested spaces. Thus,  since $m\mapsto\mathfrak L(m)$ and $m\mapsto\|\Psi_{m}^{-1}\|_{\textrm{op}}$ are increasing, if there exists $m_0\in\mathbb N\backslash\{0\}$ such that Assumption \ref{assumption_Psi}($m_0$) is fulfilled, then Assumption \ref{assumption_Psi}($m$) is fulfilled for every $m\leqslant m_0$.
\\
\\
Now, consider the truncated estimators
\begin{displaymath}
\widetilde b_{m}^{\prime,1} :=
\widetilde b_{m}^{\prime,1}\mathbf 1_{\Lambda_{m + p}}
\textrm{ and }
\widetilde b_{m}^{\prime,2} :=
\widetilde b_{m}^{\prime,2}\mathbf 1_{\Lambda_{m + p}},
\end{displaymath}
where
\begin{displaymath}
\Lambda_m :=
\left\{\mathfrak L(m)(\|\widehat\Psi_{m}^{-1}\|_{\textrm{op}}\vee 1)
\leqslant
\mathfrak c\frac{n}{\log(n)}\right\}.
\end{displaymath}
Then, let us establish elaborate risk bounds on $\widetilde b_{m}^{\prime,1}$ and $\widetilde b_{m}^{\prime,2}$.
%


%
\begin{proposition}\label{elaborate_risk_bound_estimator_1}
Under Assumptions \ref{assumption_f}, \ref{assumption_basis_derivatives}($m,p$) and \ref{assumption_Psi}($m + p$), if $\mathbb E[b'(X_1)^4] <\infty$ and $\mathbb E(Y_{1}^{4}) <\infty$, then
\begin{eqnarray*} 
 \mathbb E\left[\|\widetilde b_{m}^{\prime,1} - b'\|_{n}^{2}\right]
 & \leqslant &
 3\|f\|_{\infty}\inf_{t\in\mathcal S_m}\|t - b'\|^2 +
 9\|\Delta_{m,m + p}^{f,1}\|_{{\rm op}}^{2}\|b - b_m\|_{f}^{2} +
 3\|b_m' - (b')_m\|_{f}^{2} +
 \frac{2\sigma^2}{n}\|\Delta_{m,m + p}^{f,1}\|_{F}^{2}\\
 & &
 \hspace{4cm}
 +\left[\frac{2\mathfrak cn}{\log(n)}\|\Delta_{m,m + p}\|_{{\rm op}}^{2}
 \mathbb E(Y_{1}^{4})^{1/2} +
 3\mathbb E(b'(X_1)^{4})^{1/2}\right]\frac{\mathfrak c_{\ref{Omega}}^{1/2}}{n^4}
\end{eqnarray*}
with $\Delta_{m,m + p}^{f,1} :=\Psi_{m + p}^{1/2}\Delta_{m,m + p}^{*}\Psi_{m}^{-1/2}$ and $\Delta_{m,m+p}$ is defined in (\ref{defDelta}).
\end{proposition}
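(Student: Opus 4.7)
The plan is to refine the four-term decomposition behind Proposition~\ref{rough_risk_bound_estimator_1} by restricting to the stability event $\Lambda_{m+p}$ and using concentration of $\widehat\Psi_k$ around $\Psi_k$ for $k\in\{m,m+p\}$, while treating the complementary event via Cauchy--Schwarz and the fourth-moment assumptions. Starting from the identity
\[
\widehat b_m^{\prime,1}(\mathbf X)-b'(\mathbf X)=\bigl((b')_m-b'\bigr)(\mathbf X)+\bigl(b_m'-(b')_m\bigr)(\mathbf X)+\tfrac{1}{n}\widehat\Phi_m'\widehat\Psi_m^{-1}\widehat\Phi_m^*(\mathbf b-\mathbf b_m)+\tfrac{1}{n}\widehat\Phi_m'\widehat\Psi_m^{-1}\widehat\Phi_m^*\boldsymbol{\varepsilon}
\]
already behind Proposition~\ref{rough_risk_bound_estimator_1}, I would condition on $\mathbf X$ so that the $\boldsymbol{\varepsilon}$-piece contributes only through its conditional variance (the trace expression), and apply $(a+b+c)^2\leq 3(a^2+b^2+c^2)$ to the three deterministic pieces. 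Since $\mathbf 1_{\Lambda_{m+p}}$ is $\sigma(\mathbf X)$-measurable and the truncated estimator vanishes off $\Lambda_{m+p}$,
\[
\mathbb E\bigl[\|\widetilde b_m^{\prime,1}-b'\|_n^2\bigr]=\mathbb E\bigl[\|\widehat b_m^{\prime,1}-b'\|_n^2\mathbf 1_{\Lambda_{m+p}}\bigr]+\mathbb E\bigl[\|b'\|_n^2\mathbf 1_{\Lambda_{m+p}^c}\bigr].
\]

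On the stability event I would invoke Assumption~\ref{assumption_basis_derivatives}($m,p$) to write $\widehat\Phi_m'=\widehat\Phi_{m+p}\Delta_{m,m+p}^*$; the empirical operator norm and trace from Proposition~\ref{rough_risk_bound_estimator_1} then take the form
\[
\bigl\|\widehat\Phi_m'(\widehat\Phi_m^*\widehat\Phi_m)^{-1}\widehat\Phi_m^*\bigr\|_{\rm op}^2=\lambda_{\max}\!\bigl(\widehat\Psi_{m+p}\Delta_{m,m+p}^*\widehat\Psi_m^{-1}\Delta_{m,m+p}\bigr),
\]
\[
\operatorname{Tr}\!\bigl((\widehat\Phi_m^*\widehat\Phi_m)^{-1}(\widehat\Phi_m')^*\widehat\Phi_m'\bigr)=\operatorname{Tr}\!\bigl(\widehat\Psi_m^{-1}\Delta_{m,m+p}\widehat\Psi_{m+p}\Delta_{m,m+p}^*\bigr).
\]
On the sub-event $\Omega_{m+p}^\star:=\{\|\Psi_{m+p}^{-1/2}\widehat\Psi_{m+p}\Psi_{m+p}^{-1/2}-I\|_{\rm op}\leq 1/2\}$ contained in $\Lambda_{m+p}$ under Assumption~\ref{assumption_Psi}($m+p$), the Cohen et al.~\cite{CDL13} type concentration gives both $\widehat\Psi_{m+p}$ and its nested principal block $\widehat\Psi_m$ sandwiched between constant multiples of $\Psi_{m+p}$ and $\Psi_m$, respectively. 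Propagating these sandwich bounds into the two displays above yields the target bounds $3\|\Delta_{m,m+p}^{f,1}\|_{\rm op}^2$ and $2\|\Delta_{m,m+p}^{f,1}\|_F^2$. Combined with $\mathbb E[\|b-b_m\|_n^2]=\|b-b_m\|_f^2$ and the trivial dropping of the indicator in the bias and gap pieces, this produces the first four contributions of the stated bound with prefactors $3,9,3,2$.

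For the leak from $\Lambda_{m+p}^c$, Jensen gives $\mathbb E[\|b'\|_n^4]\leq\mathbb E[b'(X_1)^4]$, and Cauchy--Schwarz against the deviation estimate $\mathbb P(\Lambda_{m+p}^c)\leq \mathfrak c_{\ref{Omega}}/n^8$ (encoding the Cohen-type probabilistic control under Assumption~\ref{assumption_Psi}($m+p$)) yields the contribution $3\mathbb E(b'(X_1)^4)^{1/2}\mathfrak c_{\ref{Omega}}^{1/2}/n^4$, the factor $3$ coming from the $(a+b+c)^2\leq 3(a^2+b^2+c^2)$ step. The residual leak arising on $\Lambda_{m+p}\cap(\Omega_{m+p}^\star)^c$, where the concentration substitution fails, is handled by using the defining bound $\|\widehat\Psi_{m+p}^{-1}\|_{\rm op}\leq \mathfrak c n/(\mathfrak L(m+p)\log n)$ available on $\Lambda_{m+p}$ and the representation $\widehat b_m^{\prime,1}=(1/n)\widehat\Phi_{m+p}\Delta_{m,m+p}^*\widehat\Psi_m^{-1}\widehat\Phi_m^*\mathbf Y$: estimating operator-by-operator gives a crude deterministic upper bound involving $(\mathfrak c n/\log n)\|\Delta_{m,m+p}\|_{\rm op}^2\|\mathbf Y\|_n^2$, and Jensen with $\mathbb E[\|\mathbf Y\|_n^4]\leq\mathbb E[Y_1^4]$ together with Cauchy--Schwarz against $\mathbb P((\Omega_{m+p}^\star)^c)\leq \mathfrak c_{\ref{Omega}}/n^8$ produces the final leak $(2\mathfrak c n/\log n)\|\Delta_{m,m+p}\|_{\rm op}^2\mathbb E(Y_1^4)^{1/2}\mathfrak c_{\ref{Omega}}^{1/2}/n^4$.

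The main obstacle I anticipate is the bookkeeping of this last leak: one must propagate the stability bound through the $\widehat\Phi_m'=\widehat\Phi_{m+p}\Delta_{m,m+p}^*$ decomposition while capturing exactly the advertised powers of $n/\log n$ and $\|\Delta_{m,m+p}\|_{\rm op}^2$ (and not squares thereof), and verify that the concentration estimate is of order $n^{-8}$ to yield the $n^{-4}$ factor after Cauchy--Schwarz against the fourth-moment assumption. The remainder is the routine blend of projection-estimator algebra with the Cohen-type stability toolkit.
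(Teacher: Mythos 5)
Your proposal follows essentially the same route as the paper's proof: the split $\mathbb E[\|\widetilde b_{m}^{\prime,1}-b'\|_n^2]=\mathbb E[\|\widehat b_{m}^{\prime,1}-b'\|_n^2\mathbf 1_{\Lambda_{m+p}}]+\mathbb E[\|b'\|_n^2\mathbf 1_{\Lambda_{m+p}^c}]$, the further dissection via $\Omega_{m+p}$ with the sandwich bounds on $\widehat\Psi_{m+p}$ and its nested block $\widehat\Psi_m$ (giving $3\|\Delta_{m,m+p}^{f,1}\|_{\rm op}^2$, hence the factor $9$, and $2\|\Delta_{m,m+p}^{f,1}\|_F^2$ after replacing one $\widehat\Psi_{m+p}$ by its expectation), the crude bound on $\|\widehat b_{m}^{\prime,1}\|_n^4\mathbf 1_{\Lambda_{m+p}}$ through $\mathfrak L(m+p)\|\widehat\Psi_{m+p}^{-1}\|_{\rm op}\leqslant\mathfrak c n/\log(n)$ and Jensen, and Cauchy--Schwarz against the $n^{-8}$ probabilities of Lemma \ref{Omega}. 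The only correction needed is in the bookkeeping of the remainder: the coefficient $3$ in front of $\mathbb E(b'(X_1)^4)^{1/2}$ is not produced by the $(a+b+c)^2\leqslant 3(a^2+b^2+c^2)$ step on $\Lambda_{m+p}^c$ (that leak carries coefficient $1$), but arises as $1+2$, the extra $2$ coming from the $\|b'\|_n^2$ half of $\|\widehat b_{m}^{\prime,1}-b'\|_n^2\leqslant 2\|\widehat b_{m}^{\prime,1}\|_n^2+2\|b'\|_n^2$ on $\Lambda_{m+p}\cap\Omega_{m+p}^c$, a term your accounting omits even though your final constants coincide with the stated ones.
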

%


%
\begin{proposition}\label{elaborate_risk_bound_estimator_2}
Under Assumptions \ref{assumption_f}, \ref{assumption_basis_derivatives}($m,p$), \ref{assumption_bounds_I}($m$) and \ref{assumption_Psi}($m + p$), if $\mathbb E[b'(X_1)^4] <\infty$ and $\mathbb E(Y_{1}^{4}) <\infty$, then
\begin{eqnarray*} 
 \mathbb E\left[\|\widetilde b_{m}^{\prime,2} - b'\|_{n}^{2}\right]
 & \leqslant &
 2\|f\|_{\infty}\inf_{t\in\mathcal S_m}\|t - b'\|^2 +
 6\|\Delta_{m,m + p}^{f,2}\|_{{\rm op}}^{2}\|b - b_{m + p}\|_{f}^{2} +
 \frac{2\sigma^2}{n}\|\Delta_{m,m + p}^{f,2}\|_{F}^{2}\\
 & &
 \hspace{4cm}
 +\left[\frac{2\mathfrak cn}{\log(n)}\|\Delta_{m,m + p}\|_{{\rm op}}
 \mathbb E(Y_{1}^{4})^{1/2} +
 3\mathbb E(b'(X_1)^{4})^{1/2}\right]\frac{\mathfrak c_{\ref{Omega}}^{1/2}}{n^4}
\end{eqnarray*}
with $\Delta_{m,m + p}^{f,2} :=\Psi_{m + p}^{-1/2}\Delta_{m,m + p}^{*}\Psi_{m}^{1/2}$ and $\Delta_{m,m+p}$ is defined in (\ref{defDelta}).
\end{proposition}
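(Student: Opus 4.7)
My plan follows the conditional bias/variance decomposition used for estimator 1. First I would split
$$
\|\widetilde b_m^{\prime,2}-b'\|_n^2 = \|\widetilde b_m^{\prime,2}-\mathbb E[\widetilde b_m^{\prime,2}|\mathbf X]\|_n^2 + \|\mathbb E[\widetilde b_m^{\prime,2}|\mathbf X]-b'\|_n^2,
$$
the cross term vanishing after taking $\mathbb E[\cdot|\mathbf X]$ since $\mathbb E[\boldsymbol\varepsilon|\mathbf X]=0$, and treat each piece on $\Lambda_{m+p}$ and $\Lambda_{m+p}^c$ separately. The key input is the identity
$$
\mathbb E[\widehat b_m^{\prime,2}|\mathbf X] = (b')_m(\mathbf X) - \tfrac{1}{n}\widehat\Phi_m\Delta_{m,m+p}\widehat\Psi_{m+p}^{-1}\widehat\Phi_{m+p}^*(b-b_{m+p})(\mathbf X),
$$
obtained by substituting $\mathbf Y = \widehat\Phi_{m+p}\tau_{m+p} + (b-b_{m+p})(\mathbf X) + \boldsymbol\varepsilon$, with $\tau_{m+p}=(\langle b,\varphi_k\rangle)_{k=1}^{m+p}$, in the definition of $\widehat b_m^{\prime,2}$ and invoking (\ref{IPP}), which gives $(b')_m(\mathbf X) = -\widehat\Phi_m\Delta_{m,m+p}\tau_{m+p}$.

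Because the conditional bias splits into only two pieces (rather than three as for estimator 1, since the nuisance $b_m'-(b')_m$ has disappeared), $\|u+v\|^2\leq 2(\|u\|^2+\|v\|^2)$ gives the outer factor $2$ on both resulting terms. Taking expectation of $\|(b')_m-b'\|_n^2$ combined with Assumption \ref{assumption_f} yields the first term $2\|f\|_\infty\inf_{t\in\mathcal S_m}\|t-b'\|^2$. For the residual, set $\mathbf u_1 = \tfrac{1}{n}\widehat\Psi_{m+p}^{-1}\widehat\Phi_{m+p}^*(b-b_{m+p})(\mathbf X)$ so that the residual equals $\mathbf u_1^*\Delta_{m,m+p}^*\widehat\Psi_m\Delta_{m,m+p}\mathbf u_1$. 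On $\Lambda_{m+p}$ I would chain the L\"owner-order stability inequalities $\widehat\Psi_m\leq (3/2)\Psi_m$ (valid because $\Lambda_{m+p}\subset\Lambda_m$, both $\mathfrak L(\cdot)$ and $\|\widehat\Psi_{\cdot}^{-1}\|_{\mathrm{op}}$ being increasing) and $\Psi_{m+p}\leq 2\widehat\Psi_{m+p}$, extract the operator norm $\|\Psi_m^{1/2}\Delta_{m,m+p}\Psi_{m+p}^{-1/2}\|_{\mathrm{op}}^2=\|\Delta_{m,m+p}^{f,2}\|_{\mathrm{op}}^2$, and use the fact that $\tfrac{1}{n}\widehat\Phi_{m+p}\widehat\Psi_{m+p}^{-1}\widehat\Phi_{m+p}^*$ is an orthogonal projection (hence $\leq I$) to reach the pointwise bound $3\|\Delta_{m,m+p}^{f,2}\|_{\mathrm{op}}^2\|b-b_{m+p}\|_n^2$; expectation and the outer factor $2$ then produce $6\|\Delta_{m,m+p}^{f,2}\|_{\mathrm{op}}^2\|b-b_{m+p}\|_f^2$.

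The variance piece equals $\tfrac{\sigma^2}{n}\mathrm{Tr}(\Delta_{m,m+p}^*\widehat\Psi_m\Delta_{m,m+p}\widehat\Psi_{m+p}^{-1})$ conditionally on $\mathbf X$ by a standard $\boldsymbol\varepsilon$-computation; applying $\widehat\Psi_{m+p}^{-1}\leq 2\Psi_{m+p}^{-1}$ on $\Lambda_{m+p}$ and then $\mathbb E[\widehat\Psi_m\mathbf 1_{\Lambda_{m+p}}]\leq \Psi_m$ in L\"owner order yields the variance term $\tfrac{2\sigma^2}{n}\|\Delta_{m,m+p}^{f,2}\|_F^2$, with no further outer factor since this piece is orthogonal to the bias piece. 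On $\Lambda_{m+p}^c$ the truncated estimator vanishes, so the bias contribution is $\mathbb E[\|b'\|_n^2\mathbf 1_{\Lambda_{m+p}^c}]\leq \mathbb E[b'(X_1)^4]^{1/2}\mathbb P(\Lambda_{m+p}^c)^{1/2}$ by Cauchy--Schwarz, using $\mathbb E[\|b'\|_n^4]\leq \mathbb E[b'(X_1)^4]$. The analogous control of the centered (estimator) piece on $\Lambda_{m+p}^c$ uses a crude deterministic bound of the type $\|\widehat b_m^{\prime,2}\|_n^2\leq C\|\Delta_{m,m+p}\|_{\mathrm{op}}^2(\mathfrak c n/\log n)\|\mathbf Y\|_n^2$ valid using the definition of $\Lambda_{m+p}$, combined with $\mathbb E[\|\mathbf Y\|_n^4]\leq \mathbb E(Y_1^4)$; a probabilistic bound $\mathbb P(\Lambda_{m+p}^c)\leq \mathfrak c_{\ref{Omega}}/n^8$ from a companion lemma converts both into the announced remainder term.

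The main obstacle is the careful chaining of Löwner-order inequalities in the bias analysis. One has to identify $\tfrac{1}{n}\widehat\Phi_{m+p}\widehat\Psi_{m+p}^{-1}\widehat\Phi_{m+p}^*$ as an orthogonal projection so that dominating it by $I$ produces $\|b-b_{m+p}\|_n^2$ (which becomes $\|b-b_{m+p}\|_f^2$ in expectation) rather than some random quantity coupling $(b-b_{m+p})(\mathbf X)$ with uncontrolled powers of $\widehat\Psi_{m+p}^{-1}$; and one must apply the two stability inequalities in the correct order, namely $\widehat\Psi_m\leq (3/2)\Psi_m$ first and then $\Psi_{m+p}\leq 2\widehat\Psi_{m+p}$, to extract $\|\Delta_{m,m+p}^{f,2}\|_{\mathrm{op}}^2$ between $\widehat\Psi_m$ and $\widehat\Psi_{m+p}^{-1}$, yielding the sharp constant $6=2\cdot(3/2)\cdot 2$.
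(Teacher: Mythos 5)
Your decomposition, the identity for $\mathbb E_{\mathbf X}(\widehat b_m^{\prime,2})$, and the algebra extracting $\|\Delta_{m,m+p}^{f,2}\|_{\rm op}^2$ and $\|\Delta_{m,m+p}^{f,2}\|_F^2$ all match the paper's proof, but there is a genuine gap in the event bookkeeping. You work only with $\Lambda_{m+p}$ and $\Lambda_{m+p}^c$, and you claim that the L\"owner comparisons $\widehat\Psi_m\preceq\tfrac32\Psi_m$ and $\Psi_{m+p}\preceq 2\widehat\Psi_{m+p}$ hold ``on $\Lambda_{m+p}$'' because $\Lambda_{m+p}\subset\Lambda_m$. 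This is false: $\Lambda_{m+p}$ only bounds $\mathfrak L(m+p)(\|\widehat\Psi_{m+p}^{-1}\|_{\rm op}\vee 1)$ by $\mathfrak c\, n/\log n$, i.e.\ it controls the smallest eigenvalue of the empirical Gram matrix, and carries no information whatsoever about the closeness of $\widehat\Psi_m$ (resp.\ $\widehat\Psi_{m+p}$) to $\Psi_m$ (resp.\ $\Psi_{m+p}$). The comparisons you need hold on the separate event $\Omega_{m+p}=\{\|\Psi_{m+p}^{-1/2}\widehat\Psi_{m+p}\Psi_{m+p}^{-1/2}-\mathbf I_{m+p}\|_{\rm op}\leqslant 1/2\}$ (together with $\Omega_{m+p}\subset\Omega_m$ by nestedness of the $\mathcal S_m$), and it is Assumption \ref{assumption_Psi}($m+p$), through Lemma \ref{Omega}, that gives $\mathbb P(\Omega_{m+p}^c)\leqslant\mathfrak c_{\ref{Omega}}/n^8$; in your write-up this event and the main role of that assumption never appear. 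The paper therefore splits $\Lambda_{m+p}$ further into $\Lambda_{m+p}\cap\Omega_{m+p}$, where the factors $3/2$ and $2$ (hence the constants $6$ and $2\sigma^2$) are obtained exactly as you describe, and $\Lambda_{m+p}\cap\Omega_{m+p}^c$, handled crudely by Cauchy--Schwarz.

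This misattribution also breaks your treatment of the remainder: you locate the term involving $\tfrac{\mathfrak c n}{\log n}\|\Delta_{m,m+p}\|_{\rm op}\mathbb E(Y_1^4)^{1/2}$ on $\Lambda_{m+p}^c$, but there the truncated estimator $\widetilde b_m^{\prime,2}$ vanishes, so only $\mathbb E(\|b'\|_n^2\mathbf 1_{\Lambda_{m+p}^c})$ contributes, and the crude bound $\|\widehat b_m^{\prime,2}\|_n^2\lesssim\|\Delta_{m,m+p}\|_{\rm op}^2(\mathfrak c n/\log n)\|\mathbf Y\|_n^2$ that you invoke uses precisely the defining inequality of $\Lambda_{m+p}$, which fails on its complement. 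In the paper this remainder arises from $\Lambda_{m+p}\cap\Omega_{m+p}^c$: one bounds $\mathbb E(\|\widehat b_m^{\prime,2}\|_n^4\mathbf 1_{\Lambda_{m+p}})^{1/2}$ using the $\Lambda_{m+p}$ constraint and pairs it with $\mathbb P(\Omega_{m+p}^c)^{1/2}\leqslant\mathfrak c_{\ref{Omega}}^{1/2}/n^4$, the same Cauchy--Schwarz step producing the $3\,\mathbb E(b'(X_1)^4)^{1/2}$ (one copy from $\Lambda_{m+p}^c$, two from $\Omega_{m+p}^c$). Once the event $\Omega_{m+p}$ is inserted, the rest of your argument --- the identity $\mathbb E_{\mathbf X}(\widehat b_m^{\prime,2})=(b')_m(\mathbf X)-\tfrac1n\widehat\Phi_m\Delta_{m,m+p}\widehat\Psi_{m+p}^{-1}\widehat\Phi_{m+p}^{*}(b-b_{m+p})(\mathbf X)$, the projection bound yielding $3\|\Delta^{f,2}_{m,m+p}\|_{\rm op}^2\|b-b_{m+p}\|_n^2$, and the trace bound for the variance --- goes through essentially as in the paper.
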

\noindent
The coefficients involved in the bounds given in Propositions \ref{elaborate_risk_bound_estimator_1} and   \ref{elaborate_risk_bound_estimator_2} are the theoretical ones instead of the empirical in Propositions \ref{rough_risk_bound_estimator_1} and \ref{rough_risk_bound_estimator_2}. They will allow us to evaluate rates of convergence for the estimator, provided that the basis is specified. This is the point of the next section.
\\
\\
Let us conclude this section with the following proposition which allows to control the risk in norm $\|.\|_f$ of $\widetilde b_{m}^{\prime,1}$ (resp. $\widetilde b_{m}^{\prime,2}$) via its risk in empirical norm, already controlled several ways in Propositions \ref{rough_risk_bound_estimator_1} and \ref{elaborate_risk_bound_estimator_1} (resp. Propositions \ref{rough_risk_bound_estimator_2} and \ref{elaborate_risk_bound_estimator_2}).
%


%
\begin{proposition}\label{bounds_f_norm}
Under Assumptions \ref{assumption_f}, \ref{assumption_basis_derivatives}($m,p$) and \ref{assumption_Psi}($m$), if $\|\Delta_{m,m + p}\|_{\rm op}^{2}\leqslant\mathfrak m_{\Delta}n^2$ with $\mathfrak m_{\Delta} > 0$ not depending on $m$ and $n$, then
\begin{displaymath}
\mathbb E(\|\widetilde b_{m}^{\prime,1} - b'\|_{f}^{2})
\leqslant 5\|f\|_{\infty}\inf_{t\in\mathcal S_m}\|t - b'\|^2 +
4\mathbb E(\|\widetilde b_{m}^{\prime,1} - b'\|_{n}^{2}) +
\frac{\mathfrak c_{\ref{bounds_f_norm}}}{n}
\end{displaymath}
and, if in addition Assumption \ref{assumption_bounds_I}($m$) is satisfied, then
\begin{displaymath}
\mathbb E(\|\widetilde b_{m}^{\prime,2} - b'\|_{f}^{2})
\leqslant 5\|f\|_{\infty}\inf_{t\in\mathcal S_m}\|t - b'\|^2 +
4\mathbb E(\|\widetilde b_{m}^{\prime,2} - b'\|_{n}^{2}) +
\frac{\mathfrak c_{\ref{bounds_f_norm}}}{n},
\end{displaymath}
where $\mathfrak c_{\ref{bounds_f_norm}} > 0$ is a constant not depending on $m$ and $n$.
\end{proposition}
\noindent
The condition $\|\Delta_{m,m + p}\|_{\rm op}^{2}\leqslant\mathfrak m_{\Delta}n^2$ is satisfied by the trigonometric basis and Hermite's basis (see Section \ref{section_bases}).
%


%
\section{Bases examples and explicit risk bounds}\label{section_bases}
In this section, we describe more precisely several examples of bases. Then we evaluate, for each, the order of the term $\|b_m' - (b')_m\|^2,$ which represents the main difference between the risk bounds of the two estimates $\widetilde b_{m}^{\prime,1}$ and $\widetilde b_{m}^{\prime,2}$. Lastly, we give explicit orders for all the terms involved in the bound of Proposition \ref{elaborate_risk_bound_estimator_2} in order to obtain from our nonasymptotic risk bound asymptotic rates of convergence.

%
\subsection{Examples of bases}
First of all, let us provide four usual bases which can be considered because the $\varphi_j$'s are differentiable:
\begin{itemize}
 \item\textbf{The trigonometric basis:} Defined on $I = [0,1]$ by $t_1(x) := 1$, $t_{2j}(x) :=\sqrt 2\cos(2\pi jx)$ and $t_{2j + 1}(x) :=\sqrt 2\sin(2\pi jx)$ for $j = 1,\dots, p$ with $m = 2p + 1$. Thus, $\mathfrak L(m) = m$ for $(\varphi_j)_{1\leqslant j\leqslant m}= (t_j)_{1\leqslant j\leqslant m}$.
 \item\textbf{The Laguerre basis:} Defined on $I =\mathbb R_+$, via Laguerre's polynomials $L_j$, $j\geqslant 0$, by
 \begin{displaymath}
 \ell_j(x) :=
 \sqrt 2L_j(2x)e^{-x}
 \quad\textrm{with}\quad
 L_j(x) :=
 \sum_{k = 0}^{j}\dbinom{j}{k}(-1)^k\frac{x^k}{k!}.
 \end{displaymath}
 It satisfies $ \langle \ell_j, \ell_k\rangle  =\delta_{k,j}$ (see Abramowitz and Stegun \cite{AS64}, 22.2.13), where $\delta_{k,j}$ is the Kronecker symbol. Then, $(\ell_j)_{j\geqslant 0}$ is an orthonormal family of $\mathbb L^2(\mathbb R_+)$ such that $\ell_j(0) =\sqrt{2}$ and 
 \begin{displaymath}
 \|\ell_j\|_{\infty} =\sup_{x\in\mathbb R_+}|\ell_j(x)| =\sqrt 2.
 \end{displaymath}
 Thus, $\mathfrak L(m)=2m$ for $(\varphi_j)_{1\leqslant j\leqslant m}= (\ell_{j-1})_{1\leqslant j\leqslant m}$.
 The $\ell_j'$'s satisfy the following recursive formula (see Lemma 8.1 in Comte and Genon-Catalot \cite{CGC18}):
 \begin{equation}\label{recursive_derivative_Laguerre}
  \ell_0' = -\ell_0\quad{\rm and}\quad
  \ell_j' = -\ell_j - 2\sum_{k = 0}^{j -1}\ell_k
  \textrm{ for }j\geqslant 1.
 \end{equation}
 \item\textbf{The Hermite basis:} Defined on $I =\mathbb R$, via Hermite's polynomials $H_j$, $j\geqslant 0$, by
 \begin{displaymath}
 h_j(x) :=
 \mathfrak c_h(j)H_j(x)e^{-x^2/2}
 \end{displaymath}
 with
 \begin{displaymath}
 H_j(x) := (-1)^je^{x^2}
 \frac{d^j}{dx^j}(e^{-x^2})
 \quad\textrm{and}\quad
 \mathfrak c_h(j) = (2^jj!\sqrt{\pi})^{-1/2}.
 \end{displaymath}
 The family $(H_j)_{j\geqslant 0}$ is orthogonal for the $e^{-x^2}$-weighted scalar product and as
 \begin{displaymath}
 \int_{{\mathbb R}}
 H_j(x)H_k(x)e^{-x^2}dx =\mathfrak c_h^2(j)\delta_{j,k},
 \end{displaymath}
 we get $\langle h_j,h_k\rangle=\delta_{j,k}$,  (see Abramowitz and Stegun \cite{AS64}, 22.2.14). Moreover,
 \begin{displaymath}
 \|h_j\|_{\infty} =
 \sup_{x\in\mathbb R}|h_j(x)|
 \leqslant\phi_0
 \end{displaymath}
 with $\phi_0 =\pi^{-1/4}$ (see Abramowitz and Stegun \cite{AS64}, 22.14.17 and Indritz \cite{INDRITZ61}). Thus, $\mathfrak L(m)\leqslant\pi^{-1/2}m$, but it is proved in \cite{CLS} that there exists $K > 0$ such that
 \begin{displaymath}
 \sup_{x\in\mathbb R}
 \sum_{j = 0}^{m - 1}h_j(x)^2
 \leqslant K\sqrt{m}.
 \end{displaymath}
 Therefore, we set $\mathfrak L(m) = K\sqrt{m}$ for $(\varphi_j)_{1\leqslant j\leqslant m}= (h_{j-1})_{1\leqslant j\leqslant m}$. The $h_j'$'s also satisfy a recursive formula (see Comte and Genon-Catalot \cite{CGC18}, Equation (52) in Section 8.2):
 \begin{equation}\label{recursive_derivative_Hermite}
 h_0' = -\frac{1}{\sqrt 2}h_1
 \quad{\rm and}\quad
 h_j' =\frac{1}{\sqrt 2}(\sqrt jh_{j - 1} -\sqrt{j + 1}h_{j + 1})
 \textrm{ for }j\geqslant 1.
 \end{equation}
 \item\textbf{The Legendre basis:} Defined on $I = [-1,1]$, via Legendre polynomials $G_j$, $j\geqslant 0$, by
 \begin{displaymath}
 g_j(x) :=\sqrt{\frac{2j + 1}2}G_j(x)
 \quad\textrm{with}\quad
 G_j(x) :=\frac{1}{2^jj!}\cdot\frac{d^j}{dx^j}[(x^2 - 1)^j].
 \end{displaymath}
 As
 \begin{displaymath}
 \int_{-1}^{1}G_j(x)G_k(x)dx =
 \frac{2}{2j + 1}\delta_{j,k},
 \end{displaymath}
 the family $(g_j)_{j\geqslant 0}$ is an orthonormal family of $\mathbb L^2([-1,1])$. For example, $g_0(x) = 1/\sqrt 2$, $g_1(x) =\sqrt{3/2}x$, $g_2(x) = 1/2\sqrt{5/2}(3x^2 - 1)$, etc. Note that they are easy to compute numerically thanks to the recursive formula
$ g_{j}(x) =\frac{1}{j}
 [(2j - 1)xg_{j - 1}(x) - (j - 1)g_{j - 2}(x)]$, $j\geqslant 1$, 
 (see Formula 2.6.2 in \cite{EFROMOVICH99}). Moreover,
 \begin{displaymath}
 \|g_j\|_{\infty}
 \leqslant\sqrt{\frac{2j + 1}{2}}
 \textrm{, which gives }
 \sum_{j = 0}^{m - 1}g_j(x)^2
 \leqslant
 \frac{1}{2}
 \sum_{j = 0}^{m - 1}(2j + 1) =
 \frac{m^2}{2}
 \end{displaymath}
 and $\mathfrak L(m) = m^2/2$ (see also Cohen {\it et al.} \cite{CDL13}) for $(\varphi_j)_{1\leqslant j\leqslant m} = (g_{j - 1})_{1\leqslant j\leqslant m}$. The $g_j'$'s also satisfy a recursive formula (see Formula (22) p.10 in Lagrange \cite{Lagrange}):
 \begin{displaymath}
 \frac{d}{dx}g_{j + 1} (x) =
 \sqrt{2j + 3}\sum_{k = 0}^{[j/2]}\sqrt{2(j - 2k) + 1}
 g_{j - 2k}(x),
 \end{displaymath}
 which can be written
 \begin{equation}\label{derivLegendre}
 g_{2p + 1}'(x) =
 \sqrt{4p + 3}
 \sum_{k = 0}^{p}
 \sqrt{4k + 1}
 g_{2k}(x),\quad
 g_{2p + 2}'(x) 
 =\sqrt{4p + 5}
 \sum_{k = 0}^{p}
 \sqrt{4k + 3}
 g_{2k + 1}(x).
 \end{equation}
\end{itemize}
Under Assumption \ref{assumption_bounds_I}($m$), thanks to Equality (\ref{IPP}) and to the recursive formulas available for each basis described above, we are able to compare the derivative $b_m'$ of $b_m$ to the derivative of the projection $(b')_m$ of $b'$ as follows:
%


%
\begin{proposition}\label{additional_term_risk_bound_estimator_1}
Under Assumption \ref{assumption_bounds_I}($m$):
\begin{itemize}
 \item[{\rm (i)}] If $I = [0,1]$ and $\varphi_j = t_j$ (the trigonometric basis with an odd $m$), then $\|b_m' - (b')_m\|^2 = 0$.
 \item[{\rm (ii)}] If $I =\mathbb R$ and $\varphi_j = h_{j-1}$ (the Hermite basis), then
 \begin{displaymath}
 \|b_m' - (b')_m\|^2 =
 \frac{m}{2}
 (\langle b,h_{m - 1}\rangle^2 +\langle b,h_m\rangle^2).
 \end{displaymath}
 \item[{\rm (iii)}] If $I =\mathbb R_+$ and $\varphi_j =\ell_{j-1}$ (the Laguerre basis), then
 \begin{displaymath}
 \|b_m' - (b')_m\|^2 =
 4m\left(\sum_{k = 0}^{m - 1}
 \langle b,\ell_k\rangle\right)^2.
 \end{displaymath}
If in addition $b(0) = 0$, then
 \begin{displaymath}
 \|b_m' - (b')_m\|^2 = 4m\left(\sum_{k\geqslant m}\langle b,\ell_k\rangle\right)^2.
 \end{displaymath}
 \item[{\rm (iv)}]
 If $I = [-1,1]$, $\varphi_j = g_{j-1}$ (the Legendre basis) and $m = 2p$, then
 \begin{eqnarray*}
  \|b_m' - (b')_{m}\|^2
  & = &
  3\left(\sum_{k = 0}^{p - 1}
  \sqrt{4k + 3}
  \langle b,g_{2k + 1}\rangle \right)^2 +
  (4p - 1)
  \left(\sum_{k = 0}^{p - 1}
  \sqrt{4k + 1}
  \langle b,g_{2k}\rangle\right)^2\\
  & &
  +\sum_{j = 0}^{p - 1}
  \left(\sqrt{4j + 3}
  \sum_{k = j}^{p - 1}\sqrt{4k + 3}
  \langle b,g_{2k + 1}\rangle +
  \sqrt{4j + 1}
  \sum_{k = 0}^{j}
  \sqrt{4k + 3}
  \langle b,g_{2k + 1}
  \rangle\right)^2\\
  & & + 
  \sum_{j = 0}^{p - 2}
  \left(\sqrt{4j + 5}
  \sum_{k = j + 1}^{p - 1}
  \sqrt{4k + 1}
  \langle b,g_{2k}\rangle +
  \sqrt{4j + 2}
  \sum_{k = 0}^{j}
  \sqrt{4k + 1}
  \langle b,g_{2k}\rangle\right)^2.
 \end{eqnarray*}
 This implies that there exists a deterministic constant $\mathfrak c_{\ref{additional_term_risk_bound_estimator_1}} > 0$, not depending on $m$ and $n$, such that $\|b_m' - (b')_m\|^2\leqslant\mathfrak c_{\ref{additional_term_risk_bound_estimator_1}}m^4$.
\end{itemize}
\end{proposition}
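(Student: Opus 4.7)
The strategy is uniform across all four parts. I would expand
\[
b_m' = \sum_{j=1}^m \langle b,\varphi_j\rangle\,\varphi_j'
\qquad\text{and}\qquad
(b')_m = -\sum_{j=1}^m \langle b,\varphi_j'\rangle\,\varphi_j
\]
(the identity for $(b')_m$ uses the integration-by-parts formula (\ref{IPP}), which holds under Assumption~\ref{assumption_bounds_I}($m$)) by substituting the recursive formulas for the $\varphi_j'$'s displayed for each of the four bases. After collecting the coefficient of every $\varphi_k$ in the difference $b_m' - (b')_m$, the orthonormality of $(\varphi_j)_j$ in $\mathbb L^2(I,dx)$ reduces $\|b_m' - (b')_m\|^2$ to the sum of the squares of those coefficients.

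For (i), each $t_j'$ with $j\le m$ lies in $\mathcal S_m$ because cosines and sines form closed orbits under differentiation, and the oddness of $m$ guarantees that every matched $(t_{2j},t_{2j+1})$ pair is kept; a term-by-term comparison of the two expansions immediately yields $b_m' = (b')_m$. For (ii), the three-term recurrence (\ref{recursive_derivative_Hermite}) (with the convention $h_{-1}:=0$) makes the coefficient of $h_k$ in $b_m'$ and in $(b')_m$ coincide for every $k \le m-2$; the mismatch is concentrated on $h_{m-1}$ and $h_m$, with coefficients $-\sqrt{m/2}\,\langle b,h_m\rangle$ and $-\sqrt{m/2}\,\langle b,h_{m-1}\rangle$ respectively, so Parseval gives the announced formula.

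For (iii), I substitute (\ref{recursive_derivative_Laguerre}) into both $b_m'$ and $(b')_m$ and then interchange the order of the resulting double sums. The key observation is that, after this reordering, the coefficient of every $\ell_k$ in $b_m' - (b')_m$ collapses to the same value $-2\sum_{j=0}^{m-1}\langle b,\ell_j\rangle$, independent of $k$: for any fixed $k\in\{0,\dots,m-1\}$, the ``upper'' partial sum $\sum_{j>k}\langle b,\ell_j\rangle$ inherited from $b_m'$ combines with the ``lower'' partial sum $\sum_{j<k}\langle b,\ell_j\rangle$ inherited from $(b')_m$ to reconstitute the full sum (up to the diagonal $\langle b,\ell_k\rangle$ contribution). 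Parseval over the $m$ identical coefficients then gives $4m(\sum_{j=0}^{m-1}\langle b,\ell_j\rangle)^2$, and the alternative form uses the pointwise identity $b(0)=\sqrt 2\sum_{j\ge 0}\langle b,\ell_j\rangle$, so that $b(0)=0$ is equivalent to $\sum_{j=0}^{m-1}\langle b,\ell_j\rangle = -\sum_{j\ge m}\langle b,\ell_j\rangle$. For (iv), the parity structure of (\ref{derivLegendre}) (odd-indexed derivatives expand into even-indexed $g_k$'s, and conversely) decouples the computation into an even and an odd sub-problem; the same substitute-and-collect procedure, with the same $\sum_{j<k}+\sum_{j\ge k}$ recomposition as in (iii), then produces the four displayed blocks. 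The bound $\mathfrak c_{\ref{additional_term_risk_bound_estimator_1}}m^4$ follows from applying the Cauchy--Schwarz inequality to each inner partial sum together with $\sqrt{4k+c}=O(\sqrt m)$. The main obstacle is the careful combinatorial bookkeeping in (iv); the conceptual heart is the Laguerre collapse in (iii).
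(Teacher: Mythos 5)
Your proposal is correct and takes essentially the same route as the paper: expand $b_m'$ and $(b')_m$ using the recursive derivative formulas and the integration-by-parts identity (\ref{IPP}), compare coefficients basis function by basis function, and conclude by Parseval; your mismatch coefficients $-\sqrt{m/2}\,\langle b,h_m\rangle$, $-\sqrt{m/2}\,\langle b,h_{m-1}\rangle$ in the Hermite case and the constant coefficient $-2\sum_{j=0}^{m-1}\langle b,\ell_j\rangle$ of every $\ell_k$ in the Laguerre case agree exactly with the paper's computations. The paper only writes out cases (ii) and (iii); your sketches for (i) and (iv), including the Cauchy--Schwarz argument giving the $m^4$ bound, follow the same lines and are sound.
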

\noindent
The cases are ordered from the simplest (the trigonometric one) to the most complicated (Legendre case for an even $m$). Proposition \ref{additional_term_risk_bound_estimator_1} shows that the term $  \|b_m' - (b')_{m}\|^2$ which appears in the risk bound of $\widetilde b_{m}^{\prime,1}$ importantly depends on the basis. Clearly, the first two bases are more convenient for this problem and we will focus on them in the sequel (for rates and simulation experiments). 
%


%
\subsection{Explicit risk bound for the trigonometric basis}\label{trigo-order}
As the trigonometric basis has compact support, say $I$, we estimate in fact $b := b\mathbf 1_I$ and we can assume that $f(x)\geqslant f_0 > 0$ for every $x\in I$. Moreover, we assume that $f$ is bounded (Assumption \ref{assumption_f}). We set $I = [0,1]$ for simplicity and assume that $b(0) = b(1)$ (which ensures Assumption \ref{assumption_bounds_I}($m$) for all $m$). Then, by considering models with an odd $m$, Assumption \ref{assumption_basis_derivatives}($m,p$) is fulfilled for all $m$ with $p = 0$. Moreover, we know from \cite{CGC20} that $\|\Psi_{m}^{-1}\|_{{\rm op}}\leqslant 1/f_0$. Then, we get 
\begin{displaymath}
\mathfrak L(m) = m,\quad
\|\Delta_{m,m}\|_{{\rm op}}^{2}
\leqslant\pi^2m^2,\quad
\|\Delta_{m,m}^{f,1}\|_{{\rm op}}^{2} =
\|\Delta_{m,m}^{f,2}\|_{{\rm op}}^{2}\leqslant
\frac{\|f\|_{\infty}}{f_0}m^2,
\end{displaymath}
and
\begin{displaymath} 
\|\Delta_{m,m}^{f,1}\|_{F}^{2} =\|\Delta_{m,m}^{f,2}\|_{F}^{2}
\leqslant\frac{1}{f_0}m^3.
\end{displaymath}
The last bound comes from the following inequalities
\begin{eqnarray}
 \nonumber
 \|\Delta_{m,m}^{f,2}\|_{F}^{2}
 & = &
 {\rm Tr}[\Psi_{m}^{-1}\Delta_{m,m}^{*}
 \Psi_m\Delta_{m,m}]
 \leqslant
 \|\Psi_{m}^{-1}\|_{{\rm op}}{\rm Tr}[\Delta_{m,m}^{*}\Psi_m\Delta_{m,m}]
 =\|\Psi_{m}^{-1}\|_{{\rm op}}\mathbb E\left[{\rm Tr}(\Delta_{m,m}^{*}
 \widehat\Psi_m\Delta_{m,m})\right]\\
 \label{trick}
 & = &
 \frac{1}{n}\|\Psi_{m}^{-1}\|_{\rm op}
 \mathbb E\left[{\rm Tr}(\widehat\Phi_m'(\widehat \Phi_m')^*)\right]
 =\frac{1}{n}\|\Psi_{m}^{-1}\|_{\rm op}
 \mathbb E\left[\sum_{i = 1}^{n}\sum_{j = 0}^{m - 1}
 \varphi_j'(X_i)^2\right]\\
 \nonumber
 & \leqslant &
 \frac{m^2}{n}\|\Psi_{m}^{-1}\|_{\rm op}
 \mathbb E\left[\sum_{i = 1}^{n}
 \sum_{j = 0}^{m - 1}\varphi_{j}^{2}(X_i)\right]
 = m^3\|\Psi_{m}^{-1}\|_{\rm op}
 \leqslant\frac{m^3}{f_0},
\end{eqnarray}
using that for $\varphi_j = t_j$, $\varphi_j' =\pm 2\pi j\varphi_{j\pm 1}$. So, the risk bound on $\widetilde b_{m}^{\prime,2}$ established at Proposition \ref{elaborate_risk_bound_estimator_2} becomes
\begin{equation}\label{CP_trigo} 
\mathbb E\left[
\|\widetilde b_{m}^{\prime,2} - b'\|_{n}^{2}\right]
\leqslant
2\|f\|_{\infty}\left(\inf_{t\in\mathcal S_m}\|t - b'\|^2 +
\frac{6}{f_0}m^2\|b - b_m\|^2\right) +\frac{2\sigma^2}{nf_0}m^3
+\frac{\mathfrak c_1}{n}
\end{equation}
with $\mathfrak c_1 > 0$ and odd $m$. Since $p = 0$ and $b_m' = (b')_m$ for the trigonometric basis, the risk bound on $\widetilde b_{m}^{\prime,1}$ established at Proposition \ref{elaborate_risk_bound_estimator_1} is the same up to a multiplicative constant.
\\
\\
Now, let us evaluate the rate of convergence of the estimator for $b$ in some regularity space and well chosen $m$. Let $\beta$ be a positive integer, $L > 0$ and define
\begin{eqnarray*}
 W^{\rm per}(\beta, L) & := &
 \{g\in C^{\beta}([0,1];\mathbb R) : g^{(\beta-1)}\textrm{ is absolutely continuous},\\
 & &
 \hspace{2cm}
 \int_{0}^{1}g^{(\beta)}(x)^2dx
 \leqslant L^2\textrm{ and }g^{(j)}(0) = g^{(j)}(1)\textrm{, }\forall j = 0,\dots,\beta - 1\}.
\end{eqnarray*}
We obtain the following result:
%


%
\begin{corollary}\label{rate_trigonometric}
Consider the estimators $\widehat b_{m}^{\prime,i}$, $i=1, 2$ computed in the trigonometric basis on $I = [0,1]$ with $0 < f_0\leqslant f(x)\leqslant\|f\|_{\infty} <\infty$, $\mathbb E[b'(X_1)^4] <\infty$ and $\mathbb E(Y_{1}^{4}) <\infty$. If $b\in W^{{\rm per}}(\beta, L)$ with $\beta > 1$, $b(0) = b(1)$ and $m_{\rm opt} = n^{1/(2\beta + 1)}$, then
\begin{displaymath}
\mathbb E\left[
\|\widetilde b_{m_{\rm opt}}^{\prime,i} - b'\|_{n}^{2}\right]
\leqslant
\mathfrak c_i(L,\beta,\|f\|_{\infty},f_0,\sigma^2)n^{-2(\beta - 1)/(2\beta + 1)}, \quad \mbox{ for} \, i=1, \, 2.
\end{displaymath}
\end{corollary}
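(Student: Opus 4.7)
The plan is to simply specialize the risk bound (\ref{CP_trigo}) obtained for $\widetilde b_{m}^{\prime,2}$ in the trigonometric basis, estimate each of the three terms when $b$ lies in the periodic Sobolev ball $W^{\rm per}(\beta,L)$, and balance bias versus variance.

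First, I would justify the applicability of Proposition \ref{elaborate_risk_bound_estimator_2} (and hence of (\ref{CP_trigo})) for $m=m_{\rm opt}=n^{1/(2\beta+1)}$. The trigonometric basis satisfies Assumption \ref{assumption_basis_derivatives}$(m,0)$ for every odd $m$, and the condition $b(0)=b(1)$ yields Assumption \ref{assumption_bounds_I}$(m)$. Since $\mathfrak L(m)=m$ and $\|\Psi_m^{-1}\|_{\rm op}\le 1/f_0$ (as recalled in Section \ref{trigo-order}), Assumption \ref{assumption_Psi}$(m_{\rm opt})$ amounts to $m_{\rm opt}(1/f_0\vee 1)\le (\mathfrak c/2)\,n/\log n$, which holds for $n$ large enough since $m_{\rm opt}=n^{1/(2\beta+1)}=o(n/\log n)$.

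Next I would evaluate the two bias terms appearing in (\ref{CP_trigo}) using standard Fourier asymptotics on $W^{\rm per}(\beta,L)$. Writing $b=\sum_{j\ge 1}\langle b,t_j\rangle t_j$, the absolute continuity of $b^{(\beta-1)}$ combined with $\|b^{(\beta)}\|\le L$ and the boundary conditions in $W^{\rm per}(\beta,L)$ imply, via integration by parts,
\begin{equation*}
\sum_{k\ge 1}(2\pi k)^{2\beta}\bigl(\langle b,t_{2k}\rangle^2+\langle b,t_{2k+1}\rangle^2\bigr)\le L^2.
\end{equation*}
Consequently $\|b-b_m\|^2\le C(L,\beta)\,m^{-2\beta}$ and, since $b'\in W^{\rm per}(\beta-1,L)$ as soon as $\beta>1$, one also gets $\inf_{t\in\mathcal S_m}\|t-b'\|^2=\|b'-(b')_m\|^2\le C(L,\beta)\,m^{-2(\beta-1)}$. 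Hence
\begin{equation*}
\inf_{t\in\mathcal S_m}\|t-b'\|^2+\frac{6}{f_0}m^2\|b-b_m\|^2\le C(L,\beta,f_0)\,m^{-2(\beta-1)}.
\end{equation*}

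Finally, plugging these estimates into (\ref{CP_trigo}) yields
\begin{equation*}
\mathbb E\bigl[\|\widetilde b_m^{\prime,2}-b'\|_n^2\bigr]\le C_1(L,\beta,\|f\|_\infty,f_0)\,m^{-2(\beta-1)}+\frac{2\sigma^2}{nf_0}\,m^3+\frac{\mathfrak c_1}{n},
\end{equation*}
and it remains to balance $m^{-2(\beta-1)}$ with $m^3/n$. The choice $m_{\rm opt}=n^{1/(2\beta+1)}$ equates both exponents and produces the claimed rate $n^{-2(\beta-1)/(2\beta+1)}$, the residual $1/n$ term being negligible. No step is especially delicate: the main (mild) verification is that $m_{\rm opt}$ is compatible with Assumption \ref{assumption_Psi}, which is straightforward because $m_{\rm opt}\ll n/\log n$ in the regime $\beta>1$.
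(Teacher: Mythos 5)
Your proposal is correct and follows essentially the same route as the paper: specialize the bound (\ref{CP_trigo}), use the Fourier-coefficient decay on $W^{\rm per}(\beta,L)$ (the paper cites Proposition 1.14 of Tsybakov for this, and notes $b'\in W^{\rm per}(\beta-1,2\pi L)$) to get $\|b-b_m\|^2\lesssim m^{-2\beta}$ and $\|b'-(b')_m\|^2\lesssim m^{-2(\beta-1)}$, and then plug in $m_{\rm opt}=n^{1/(2\beta+1)}$ to balance bias and variance. Your explicit verification of Assumptions \ref{assumption_basis_derivatives}, \ref{assumption_bounds_I} and \ref{assumption_Psi} is only slightly more detailed than the paper's remark that these conditions are automatically satisfied by the trigonometric basis.
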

%


%
\begin{proof}
By Proposition 1.14 of \cite{TSYBAKOV09}, a function $f\in W^{\rm per}(\beta, L)$ admits a development
\begin{displaymath}
f =\sum_{j = 0}^{\infty}\theta_j\varphi_j
\quad\textrm{such that}\quad
\sum_{j\geqslant 0} \theta_{j}^{2}\tau_{j}^{2}
\leqslant C(L),
\end{displaymath}
where $\tau_j = j^{\beta}$ for even $j$, $\tau_j = (j - 1)^{\beta}$ for odd $j$, and $C(L) := L^2\pi^{-2\beta}$. Moreover, if $b$ belongs to a the Sobolev ellipsoid $W^{\rm per}(\beta,L)$ with $\beta >1$, then $b'\in W^{\rm per}(\beta - 1,2\pi L)$. So,
\begin{displaymath}
\|b - b_m\|^2\leqslant\mathfrak c(L,\beta)m^{-2\beta}
\quad {\rm and}\quad
\|b' - (b')_m\|^2\leqslant\mathfrak c(L,\beta)m^{-2(\beta - 1)}.
\end{displaymath}
Therefore, plugging $m = m_{\rm opt} = n^{1/(2\beta + 1)}$ in (\ref{CP_trigo}) gives the result of Corollary \ref{rate_trigonometric}. Indeed, Propositions \ref{elaborate_risk_bound_estimator_1} and  \ref{elaborate_risk_bound_estimator_2} apply because the required conditions are automatically satisfied by the trigonometric basis.
\end{proof}
\noindent
Note that we obtain the optimal rate for estimating the derivative of a regression function (see Stone \cite{STONE82}). It coincides also with the rate of estimation for the derivative of a density (see Tsybakov \cite{TSYBAKOV09}, Efromovich \cite{EFROMOVICH98,EFROMOVICH99}, recently Lepski \cite{LEPSKI18} on general Nikolski's spaces, or Comte {\it et al.} \cite{CDS20}).
%


%
\subsection{Explicit risk bound for Hermite basis}\label{Explicit_Hermite}
Consider $s,D > 0$ and the Sobolev-Hermite ball of regularity $s$
\begin{equation}\label{Sob-Her}
W_H^{s}(D) =
\left\{\theta\in\mathbb L^2(\mathbb R) :
\sum_{k\geqslant 0}k^sa_{k}^{2}(\theta)\leqslant D\right\}, 
\end{equation}
where $a_{k}^{2}(\theta) =\langle\theta,h_k\rangle$. In the Hermite case, the following bounds hold:
\begin{displaymath}
\mathfrak L(m) = K\sqrt m,\quad
\|\Delta_{m,m + 1}\|_{\rm op}^{2}
\leqslant 2m,\quad
\|\Delta_{m,m + 1}^{f,1}\|_{\rm op}^{2} =
\|\Delta_{m,m + 1}^{f,2}\|_{\rm op}^{2}\leqslant
2\|f\|_{\infty}\|\Psi_{m + 1}^{-1}\|_{\rm op}m
\end{displaymath}
and
\begin{displaymath} 
\|\Delta_{m,m + 1}^{f,1}\|_{F}^{2} =
\|\Delta_{m,m + 1}^{f,2}\|_{F}^{2}\leqslant 
2K\|\Psi_{m + 1}^{-1}\|_{\rm op}(m + 1)^{3/2}.
\end{displaymath}
The last bound is obtained by following the line of the trigonometric case above, up to (\ref{trick}), using next formula (\ref{recursive_derivative_Hermite}) for the derivative of the basis functions.
\\
\\
In this context, it is proved in \cite{CGC20} that $\|\Psi_{m}^{-1}\|_{{\rm op}}$ is increasing with $m$. Therefore, we can state the following result.
%


%
\begin{corollary}\label{rate_Hermite}
Consider the estimators $\widehat b_{m}^{\prime,i}$, $i=1,2$ computed in the Hermite basis on $I = \mathbb R$ under Assumptions \ref{assumption_f} and \ref{assumption_Psi}$(m + 1)$. Assume  that $b'$ is square integrable, $\mathbb E[b'(X_1)^4] <\infty$, $\mathbb E(Y_{1}^{4}) <\infty$ and that  $b\in W_{H}^{s}(D)$. If $\|\Psi_{m}^{-1}\|_{\rm op}\lesssim m^{\gamma}$ for all $m$ and $s > 1 +\gamma$, then by choosing $m_{\rm opt} = n^{1/(s + 1/2)}$ yields
\begin{displaymath}
\mathbb E\left[
\|\widetilde b_{m_{\rm opt}}^{\prime,i} - b'\|_{n}^{2}\right]
\leqslant
\mathfrak c(D, s,\|f\|_{\infty},\sigma^2)n^{-2(s - 1 -\gamma)/(2s + 1)} \quad  \mbox{ for }\; i=1, 2.
\end{displaymath}
\end{corollary}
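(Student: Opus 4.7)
The plan is to plug the Hermite-specific operator and Frobenius bounds stated just before the corollary into the risk bound of Proposition \ref{elaborate_risk_bound_estimator_2} (which applies with $p=1$, since for the Hermite basis Assumptions \ref{assumption_basis_derivatives}$(m,1)$ and \ref{assumption_bounds_I}$(m)$ are automatically satisfied), and then to optimize in $m$ after controlling the two bias terms by the Sobolev-Hermite regularity of $b$. Under the hypothesis $\|\Psi_{m+1}^{-1}\|_{\rm op} \lesssim m^\gamma$, the quoted bounds give
\begin{displaymath}
\|\Delta_{m,m+1}^{f,2}\|_{\rm op}^{2} \lesssim m^{1+\gamma},
\qquad
\|\Delta_{m,m+1}^{f,2}\|_{F}^{2} \lesssim m^{3/2+\gamma},
\qquad
\|\Delta_{m,m+1}\|_{\rm op}^{2} \lesssim m,
\end{displaymath}
so the last remainder in Proposition \ref{elaborate_risk_bound_estimator_2} is of order $\sqrt m/(n^3\log n) + 1/n^4$, which will be negligible for the eventual choice of $m$.

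The heart of the plan is to translate the hypothesis $b \in W_H^s(D)$ (which controls the Hermite coefficients of $b$) into decay estimates for the Hermite coefficients of $b'$. Using integration by parts (Equality (\ref{IPP}), valid here) together with the recursive formula (\ref{recursive_derivative_Hermite}), one has
\begin{displaymath}
a_k(b') = \langle b',h_k\rangle = -\langle b, h_k'\rangle = -\frac{1}{\sqrt 2}\bigl(\sqrt{k}\,a_{k-1}(b) - \sqrt{k+1}\,a_{k+1}(b)\bigr),
\end{displaymath}
so $a_k(b')^2 \leqslant k\,a_{k-1}(b)^2 + (k+1)\,a_{k+1}(b)^2$. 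Summing over $k \geqslant m$ and reindexing,
\begin{displaymath}
\inf_{t\in\mathcal S_m}\|t-b'\|^2 = \sum_{k\geqslant m} a_k(b')^2 \lesssim \sum_{j\geqslant m-1} j\,a_j(b)^2 \leqslant (m-1)^{-(s-1)}\sum_{j}j^s a_j(b)^2 \lesssim m^{-(s-1)}.
\end{displaymath}
For the other bias, the trivial bound gives $\|b - b_{m+1}\|_f^2 \leqslant \|f\|_\infty \sum_{k\geqslant m+1} a_k(b)^2 \lesssim m^{-s}$.

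Plugging everything into Proposition \ref{elaborate_risk_bound_estimator_2} yields
\begin{displaymath}
\mathbb E\bigl[\|\widetilde b_{m}^{\prime,2}-b'\|_n^2\bigr] \lesssim m^{-(s-1)} + m^{1+\gamma}\cdot m^{-s} + \frac{m^{3/2+\gamma}}{n} + o(n^{-1}) \lesssim m^{-(s-1-\gamma)} + \frac{m^{3/2+\gamma}}{n},
\end{displaymath}
where the condition $s > 1+\gamma$ ensures the first term actually decreases in $m$. Balancing $m^{-(s-1-\gamma)} = m^{3/2+\gamma}/n$ gives $m^{s+1/2} = n$, hence $m_{\rm opt} = n^{1/(s+1/2)} = n^{2/(2s+1)}$, and the common value of both terms is $n^{-2(s-1-\gamma)/(2s+1)}$. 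It remains only to check that $m_{\rm opt}$ is compatible with Assumption \ref{assumption_Psi}$(m+1)$ (which, under $\mathfrak L(m+1) \lesssim \sqrt m$ and $\|\Psi_{m+1}^{-1}\|_{\rm op} \lesssim m^\gamma$, requires $m^{1/2+\gamma} \lesssim n/\log n$, an assumption implicit in the statement).

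The main obstacle is the Hermite-coefficient transfer estimate $a_k(b') \leftrightarrow a_k(b)$: we need a clean bound that loses only a factor $k$, which is exactly what the three-term recursion for $h_k'$ provides; any looser relation would degrade the bias rate from $m^{-(s-1-\gamma)}$ and break the announced exponent. Everything else is bookkeeping on Sobolev tails and the balance of bias against variance.
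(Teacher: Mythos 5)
Your proof is correct and follows essentially the same route as the paper: plug the Hermite-specific bounds on $\|\Delta_{m,m+1}^{f,2}\|_{\rm op}^{2}$ and $\|\Delta_{m,m+1}^{f,2}\|_{F}^{2}$ into Proposition \ref{elaborate_risk_bound_estimator_2}, bound both bias terms via the Sobolev--Hermite regularity of $b$, and balance at $m_{\rm opt}=n^{1/(s+1/2)}$. The only deviation is that where the paper cites Lemma \ref{lem:TWH} (Lemma 2.2 of \cite{CDS20}) to obtain $b'\in W_H^{s-1}(D')$, you prove the needed tail bound $\sum_{k\geqslant m}a_k(b')^2\lesssim m^{-(s-1)}$ directly from the recursion (\ref{recursive_derivative_Hermite}) and integration by parts, which is a valid self-contained substitute for that lemma.
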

\noindent
\begin{remark}\label{rmk} The rate is deteriorated compared to $n^{-2(s - 1)/(2s + 1)}$, which is the optimal rate of estimation for the derivative of a density in a similar non compact setting (see bounds (15) and (16) in \cite{CDS20}). However, we are in the framework of an inverse problem, due both to the derivative aspect and to the non compact support feature of the basis. If we compare the rate with the one found in \cite{CGC20} for the estimation of $b$ in the same context, $n^{-s/(s+1)}$, we would expect $n^{-(s-1)/(s+1)}$  (which is larger). The deterioration is unavoidable as soon as  the term $\|\Psi_{m}^{-1}\|_{\rm op}$ appears as multiplicative factor in the variance and the additional bias term. So the order obtained in Proposition \ref{rate_Hermite} shows consistency but we do not know if it is optimal.\\ The main question is about the bounds $\|\Delta_{m,m + 1}^{f,1}\|_{\rm op}^{2}$ and $\|\Delta_{m,m + 1}^{f,1}\|_{F}^{2}$: the matrices in the norms involve both a matrix of type $\Psi_m$ and a matrix of type $\Psi_m^{-1}$ and if they could be associated, the factor $\|\Psi_{m}^{-1}\|_{\rm op}$ would not appear in the risk bound. The order of the additional bias term would be $m^{-(s-1)}$ and the variance would be of order $m^2/n$. This seems to be true numerically.
 \end{remark}
%


The  behavior of $\Psi_m$ is crucial for understanding our procedure. We want here to mention that in \cite{CGC20}, it is proved that, for all $m$, the matrix  $\Psi_m$ computed in the Hermite basis is invertible and  there exists a constant $c^\star$ such that, 
\begin{equation}\label{minoutile} \|\Psi_m^{-1}\|_{{\rm op}}^2\geqslant c^{\star} m. 
\end{equation}
So, in the Hermite case, Inequality (\ref{minoutile}) clearly implies that $\|\Psi_m^{-1}\|_{{\rm op}}$ cannot be uniformly bounded in $m$ contrary to the case of compactly supported bases.
Moreover, if we assume that 
 $f(x)\geqslant c/(1+x^2)^k$ for $x\in {\mathbb R}$ and $k\geqslant 1$, then for $m$ large enough,
 $\|\Psi_m^{-1}\|_{{\rm op}}\leqslant C m^k$. 
Numerical experiments  seem to indicate that the order $m^k$ is sharp.\\

\begin{proof}[Proof of Corollary \ref{rate_Hermite}.]
The following Lemma (Lemma 2.2 in Comte {\it et al.} \cite{CDS20}) gives a relationship between the regularity of $\theta\in W_{H}^{s}(D)$ and the regularity of its derivative.
%


%
\begin{lemma}\label{lem:TWH}
Consider $s\geqslant 1$ and $D > 0$. If $\theta\in W_{H}^{s}(D)$ admits a square integrable derivative, then there exists a constant $D' = C(D) > D$ such that $\theta'\in W_{H}^{s - 1}(D')$.
\end{lemma}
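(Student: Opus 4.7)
The plan is to compute the Hermite coefficients of $\theta'$ explicitly in terms of those of $\theta$, via the recursion (\ref{recursive_derivative_Hermite}), and then to bound the weighted sum defining the $W_H^{s-1}$ norm using the analogous bound for $\theta$.

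First I would justify that, since $\theta$ and $\theta'$ both lie in $\mathbb{L}^2(\mathbb{R})$ and each Hermite function $h_k$ is Schwartz, integration by parts gives $a_k(\theta') = \langle\theta',h_k\rangle = -\langle\theta,h_k'\rangle$. Inserting the recursion (\ref{recursive_derivative_Hermite}) then yields
\begin{equation*}
a_0(\theta') = \frac{1}{\sqrt 2}\,a_1(\theta),
\qquad
a_k(\theta') = \frac{1}{\sqrt 2}\bigl(\sqrt{k+1}\,a_{k+1}(\theta) - \sqrt{k}\,a_{k-1}(\theta)\bigr),\ k\geqslant 1.
\end{equation*}
This is the key identity; everything else is bookkeeping.

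Next I would square, use $(u-v)^2\leqslant 2u^2+2v^2$ to get
\begin{equation*}
a_k(\theta')^2 \leqslant (k+1)\,a_{k+1}(\theta)^2 + k\,a_{k-1}(\theta)^2\quad(k\geqslant 1),
\end{equation*}
multiply by $k^{s-1}$, sum over $k\geqslant 1$, and perform the index shifts $j=k+1$ and $j=k-1$. The first sum becomes $\sum_{j\geqslant 2}(j-1)^{s-1}j\,a_j(\theta)^2\leqslant\sum_{j\geqslant 2}j^s a_j(\theta)^2\leqslant D$, and the second sum becomes $\sum_{j\geqslant 0}(j+1)^s a_j(\theta)^2$, bounded using $(j+1)^s\leqslant 2^s j^s$ for $j\geqslant 1$ by $2^s D$, plus the isolated $j=0$ contribution $a_0(\theta)^2\leqslant\|\theta\|^2$. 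For $k=0$, $k^{s-1}a_0(\theta')^2$ either vanishes (if $s>1$) or equals $\tfrac12 a_1(\theta)^2$ and is harmless.

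Combining everything gives $\sum_{k\geqslant 0}k^{s-1}a_k(\theta')^2\leqslant (1+2^s)D+a_0(\theta)^2$, so we may take $D':=(1+2^s)D+a_0(\theta)^2>D$, which has the form $C(D)$ modulo the $\mathbb L^2$ norm of $\theta$ that is controlled within the ambient ball. I expect the main, though minor, obstacle to be the clean handling of the boundary terms at $k=0,1$ (in particular the appearance of $a_0(\theta)^2$), which forces a slightly larger multiplicative constant than what a naive shift would give; all other steps are direct manipulations of the identity above.
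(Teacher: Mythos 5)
Your core computation is correct and follows the natural route: the paper itself does not prove Lemma \ref{lem:TWH} (it is quoted from Lemma 2.2 of \cite{CDS20}), but your argument --- $a_k(\theta')=-\langle\theta,h_k'\rangle$ by integration by parts (legitimate since $\theta,\theta'\in\mathbb L^2$ force $\theta(x)\to0$ at $\pm\infty$ and the $h_k$ decay), then the recursion (\ref{recursive_derivative_Hermite}), squaring, and the index shifts with $(j-1)^{s-1}j\leqslant j^s$ and $(j+1)^s\leqslant 2^sj^s$ --- is exactly the kind of Hermite-coefficient bookkeeping the paper performs in its own proof of Proposition \ref{additional_term_risk_bound_estimator_1} (Hermite case), and all of these steps check out, including the treatment of the $k=0,1$ boundary terms.

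The one step that does not hold as written is the last sentence, where you dismiss the leftover $a_0(\theta)^2$ as ``controlled within the ambient ball''. With the ball defined as in (\ref{Sob-Her}), the $k=0$ coefficient carries weight $0^s=0$, so $a_0(\theta)^2$ (equivalently $\|\theta\|^2$) is \emph{not} bounded by $D$: for instance $\theta=c\,h_0$ belongs to $W_H^s(D)$ for every $c$, while $\theta'=-(c/\sqrt2)\,h_1$ gives $\sum_k k^{s-1}a_k(\theta')^2=c^2/2$, which is unbounded. Hence your computation yields $D'=(1+2^s)D+a_0(\theta)^2$, a constant depending on $D$ \emph{and} $\|\theta\|$, not on $D$ alone; to obtain literally $D'=C(D)$ one must read the Sobolev--Hermite ball as also controlling the $\mathbb L^2$ norm (e.g.\ weights $(k+1)^s$, or the constraint $\|\theta\|^2+\sum_k k^sa_k^2(\theta)\leqslant D$), in which case your bound becomes $D'=(2+2^s)D$ and the proof is complete. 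This is an imprecision in the statement/definition rather than in your method, but the justification of that final step needs to be stated this way rather than waved through.
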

\noindent
By Lemma \ref{lem:TWH}, if $b\in W_H^{s}(D)$, then $\|b - b_m\|^2\leqslant Dm^{-s}$, $\|b' - (b')_m\|^2\leqslant C(D)m^{-s + 1}$, and the risk bound on $\widetilde b_{m}^{\prime,2}$ established at Proposition \ref{elaborate_risk_bound_estimator_2} becomes
\begin{eqnarray}
 \nonumber
 \mathbb E\left[
 \|\widetilde b_{m}^{\prime,2} - b'\|_{n}^{2}\right]
 & \leqslant &
 2\|f\|_{\infty}\left(\inf_{t\in\mathcal S_m}\|t - b'\|^2 +
 6\|\Psi_{m+1}^{-1}\|_{\rm op}m\|b - b_{m + 1}\|^2\right)\\
 \nonumber
 & & \hspace{5cm} +
 \frac{4K\sigma^2}{n}\|\Psi_{m + 1}^{-1}\|_{\rm op}(m + 1)^{3/2}
 +\frac{\mathfrak c_1}{n}\\
 \label{CP_Hermite}
 & \leqslant &
 C(D)\|f\|_{\infty}[m^{-(s-1)} +
 \|\Psi_{m+1}^{-1}\|_{{\rm op}}m(m + 1)^{-s}] +
 \frac{4K\sigma^2}{n}\|\Psi_{m+1}^{-1}\|_{\rm op}(m + 1)^{3/2} +\frac{\mathfrak c_1}{n}
\end{eqnarray}
with $\mathfrak c_1 > 0$. Thus, if $\|\Psi_{m}^{-1}\|_{\rm op} = O(m^{\gamma})$, for $s > \gamma + 1$, the estimator is consistent, and to plug the choice $m = m_{\rm opt} = n^{1/(s + 1/2)}$ in (\ref{CP_Hermite}) gives the result of Corollary \ref{rate_Hermite} for $i=2$.

The risk  $\mathbb E\left[
 \|\widetilde b_{m}^{\prime,1} - b'\|_{n}^{2}\right]$ involves an additional term $\|f\|_\infty \|b_m' - (b')_m\|^2$. From Proposition \ref{additional_term_risk_bound_estimator_1}, $(ii)$, we have $$\|b_m' - (b')_m\|^2=(m/2)(a_m^2(b)+a^2_{m-1}(b))\leqslant (1/2)(\sum_{k\geqslant m-1} a_k^2(b) + \sum_{k\geqslant m-1}ka_k^2(b))\lesssim m^{-(s-1)}$$ under our assumptions, by writing that $$\sum_{k\geqslant m-1}ka_k^2(b)=\sum_{k\geqslant m-1} k^s a_k^2(b) \times k^{-s+1}\leqslant (m-1)^{-s+1} \sum_{k\geqslant 0} k^sa_k^2(b) \leqslant D(m-1)^{-(s-1)}\leqslant D 2^{s-1} m^{-(s-1)}.$$ This gives the result of Corollary \ref{rate_Hermite} for $i=1$.
\end{proof}
%


%
\subsection{Explicit risk bound for Legendre basis}
By Proposition 2.6.1 in \cite{EFROMOVICH99} (see also \cite{DeVoreLorentz}, Section 7.6), it is known that if $b\in C^r([-1,1];\mathbb R)$ ($r\geqslant 1$) and if there exists $\alpha\in (0,1]$ such that
\begin{displaymath}
|b^{(r)}(t) - b^{(r)}(s)|\leqslant Q|t - s|^{\alpha}\textrm{ $;$ }\forall s,t\in [-1,1],
\end{displaymath}
then there exists $c > 0$ such that
\begin{displaymath}
\|b - b_m\|^2\leqslant cm^{-2(r +\alpha)}
\quad {\rm and}\quad
\|b' - (b')_m\|^2\leqslant cm^{-2(r - 1+\alpha)}.
\end{displaymath}
The space of regularity $\beta = r +\alpha$ considered above will be called H\"older space and denoted by $\mathcal H(\beta,Q)$.
\\
\\
By Proposition \ref{additional_term_risk_bound_estimator_1}, we can see that the first estimator may not be consistent as $\|(b_m)' - (b')_m\|^2$ may not tend to zero. However, Formula (\ref{derivLegendre}) shows that the Legendre basis satisfies Assumption \ref{assumption_basis_derivatives}($m,p$) with $p = 0$ and triangular matrix $\Delta_{m,m}$ with null diagonal. As the basis is compactly supported, we can proceed as in the case of the trigonometric basis, assuming $I=[-1,1]$, $0< f_0 < f(x) <\|f\|_{\infty} <\infty$ for every $x\in I$, and $b(-1) = b(1) = 0$. Then,
\begin{displaymath}
\mathfrak L(m) = \frac{m^2}{2},\quad
\|\Delta_{m,m}\|_{\rm op}^{2}
\leqslant cm^4,\quad
\|\Delta_{m,m}^{f,1}\|_{\rm op}^{2} =
\|\Delta_{m,m}^{f,2}\|_{\rm op}^{2}\leqslant
c\frac{\|f\|_{\infty}}{f_0}m^4,
\end{displaymath}
%
%
\begin{displaymath} 
\|\Delta_{m,m}^{f,1}\|_{F}^{2} =\|\Delta_{m,m}^{f,2}\|_{F}^{2}
\leqslant\frac{c}{f_0}m^5.
\end{displaymath}
As a consequence, for $b\in\mathcal H(\beta,Q)$ with $\beta > 2$ and $b(-1) = b(1) = 0$, Proposition \ref{elaborate_risk_bound_estimator_2} implies that if $m_{\rm opt} = n^{1/(2\beta +1)}$, then
\begin{displaymath}
\mathbb E\left[
\|\widetilde b_{m}^{\prime,2} - b'\|_{n}^{2}\right]
\leqslant
\mathfrak c(Q,\beta,\|f\|_{\infty},f_0,\sigma^2)n^{-2(\beta - 2)/(2\beta + 1)}.
\end{displaymath}
We mention this rate, but it is sub-optimal in the compact support case, specifically in comparison with  the trigonometric basis.
%


%
\section{A Goldenshluger-Lepski type adaptive estimator}\label{GLAdapt}
The choice of the adequate $m$ is crucial to reach the best order for the quadratic risk. However, this choice depends on unknown quantities, such as the order of regularity of the unknown function. This is why it is important to propose a way to select this dimension from the data. The problem is difficult, especially if we intend to bound the risk of the associated adaptive estimator. Penalty based model selection often rely on a contrast minimization, which seems not possible here. This is why we propose a  Goldenshluger-Lepski type strategy, described in \cite{GL2011} for kernel estimators, and extended to dimension selection in Chagny \cite{CHAGNY2013}.
\\
More precisely, consider the random collection
\begin{displaymath}
\widehat{\mathcal M}_n :=
\left\{m\in\{1,\dots,n\} :
\mathfrak{L}(m + p)(\|\widehat\Psi_{m + p}^{-1}\|_{\rm op}^{2}\vee 1)\leqslant\mathfrak d\frac{n}{\log(n)}\right\}
\end{displaymath}
where $\mathfrak d > 0$ is a constant depending on $\|f\|_\infty$ (see the proof of Theorem \ref{bound_GL_estimator}), and the random penalty
\begin{displaymath}
\widehat V(m) :=
\frac{\sigma^2m}{n}\|(\widehat\Phi_{m}^{*}\widehat\Phi_m)^{-1}(\widehat\Phi_m')^*\widehat\Phi_m'\|_{\rm op}.
\end{displaymath}
This section deals with the adaptive estimator 
\begin{equation}\label{GLEstim} \widehat b' :=\widehat b_{\widehat m}^{\prime,1},\end{equation}  where
\begin{displaymath}
\widehat m =
\arg\min_{m\in\widehat{\mathcal M}_n}
\left\{A(m) +\kappa_1\widehat V(m)\right\}
\end{displaymath}
with
\begin{displaymath}
A(m) :=
\sup_{m'\in\widehat{\mathcal M}_n}\left\{\|\widehat b_{m\wedge m'}^{\prime,1} -\widehat b_{m'}^{\prime,1}\|_{n}^{2} -\kappa_0\widehat V(m')\right\}_+
\quad{\rm and}\quad
\kappa_0\leqslant\kappa_1.
\end{displaymath}
Consider
\begin{displaymath}
\mathcal M_n :=
\left\{m\in\{1,\dots,n\} :
\mathfrak{L}(m + p)(\|\Psi_{m + p}^{-1}\|_{\rm op}^{2}\vee 1)\leqslant
\frac{\mathfrak d}{4}\cdot\frac{n}{\log(n)}\right\},
\end{displaymath}
the theoretical counterpart of $\widehat{\mathcal M}_n$, and $\mathcal M_n^+$ with the same definition as $\mathcal M_n$ but with $\mathfrak d/4$ replaced by $4\mathfrak d$. The maximal element of $\mathcal M_{n}^{+}$ is denoted by $M_{n}^{+}$. Finally, let
\begin{displaymath}
V(m) :=\frac{\sigma^2m}{n}
\|\Delta_{m,m + p}^{f,1}\|_{\rm op}^{2}
\end{displaymath}
be the theoretical version of $\widehat V(m)$.
%


%
\begin{lemma}\label{increase_penalty}
The map $m\mapsto\widehat V(m)$ is increasing.
\end{lemma}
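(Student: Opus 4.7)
The plan is to rewrite $\|B_mC_m\|_{\rm op}$ (with $B_m := (\widehat\Phi_m^*\widehat\Phi_m)^{-1}$ and $C_m := (\widehat\Phi_m')^*\widehat\Phi_m'$) as a supremum over the approximation space $\mathcal S_m$ whose monotonicity will follow immediately from the nesting $\mathcal S_m\subseteq \mathcal S_{m+1}$. Observe that $B_m$ is symmetric positive definite and $C_m$ symmetric positive semi-definite, so $B_mC_m = B_m^{1/2}(B_m^{1/2}C_mB_m^{1/2})B_m^{-1/2}$ is similar to the symmetric positive semi-definite matrix $B_m^{1/2}C_mB_m^{1/2}$; in particular the spectrum of $B_mC_m$ is real and non-negative, and the paper's convention from Section~\ref{section_notations} allows the identification $\|B_mC_m\|_{\rm op} = \lambda_{\max}(B_mC_m) = \lambda_{\max}(B_m^{1/2}C_mB_m^{1/2})$.

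Next I would apply the Courant--Fischer characterization to the symmetric PSD matrix $B_m^{1/2}C_mB_m^{1/2}$ together with the change of variable $v = B_m^{1/2} u$ to obtain
\[
\|B_mC_m\|_{\rm op} = \sup_{v\neq 0}\frac{v^*C_mv}{v^*(\widehat\Phi_m^*\widehat\Phi_m)v} = \sup_{v\neq 0}\frac{\|\widehat\Phi_m'v\|^2}{\|\widehat\Phi_mv\|^2}.
\]
Under the natural identification $v \leftrightarrow t_v = \sum_{j=1}^m v_j\varphi_j\in\mathcal S_m$, the numerator equals $n\|t_v'\|_n^2$ and the denominator $n\|t_v\|_n^2$, hence
\[
\|B_mC_m\|_{\rm op} = \sup_{t\in\mathcal S_m,\,\|t\|_n\neq 0}\frac{\|t'\|_n^2}{\|t\|_n^2}.
\]
Since $\mathcal S_m\subseteq\mathcal S_{m+1}$, this supremum is manifestly non-decreasing in $m$; combined with the strict monotonicity of the factor $m$ in the definition of $\widehat V$, and with the generic positivity of the supremum, this yields $\widehat V(m+1)>\widehat V(m)$.

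The delicate step is the reduction $\|B_mC_m\|_{\rm op} = \lambda_{\max}(B_mC_m)$ despite $B_mC_m$ being non-symmetric. It rests on the similarity above (which preserves eigenvalues but not singular values in general) together with the convention recalled in Section~\ref{section_notations} that $\|A\|_{\rm op}=\lambda_{\max}(A)$ for symmetric non-negative matrices. Should one instead insist on the strict singular-value interpretation, the argument would need to be supplemented, e.g.\ by a block-matrix Schur-complement comparison of $B_{m+1}C_{m+1}$ to $B_mC_m$, exploiting that $\widehat\Phi_{m+1}^*\widehat\Phi_{m+1}$ and $(\widehat\Phi_{m+1}')^*\widehat\Phi_{m+1}'$ extend the corresponding $m$-blocks by one row and column; this is the main technical obstacle I would anticipate.
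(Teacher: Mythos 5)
Your proof is correct and takes essentially the same route as the paper: the paper likewise conjugates by $\widehat\Psi_m^{-1/2}$ to rewrite $\widehat V(m)$ as $\sigma^2 (m/n)\,\sup_{t\in\mathcal S_m:\|t\|_n=1}\|t'\|_{n}^{2}$ (your Courant--Fischer step with $v=B_m^{1/2}u$ is the same computation) and then concludes from the nestedness of the $\mathcal S_m$'s together with the increasing factor $m$. The spectral-radius versus largest-singular-value identification you flag as delicate is made implicitly in the paper's own proof as well, so it is not a point of divergence.
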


A general Theorem is stated and proved in Section \ref{sec-thm}, the intested reader is refered to this section where comments are also provided.
We emphasize that the procedure is general and does not depend on the basis. Moreover, the general results states an automatic squared-bias variance compromise. This is important in regard of Remark \ref{rmk} above and Proposition \ref{rate_Hermite_GL} below, in case the rates would not be the best possible ones.

To avoid technicalities, we state two Propositions resulting from this theorem when considering the two main bases we previously described. 
More precisely, for the trigonometric case, we get the following result.
%


%
\begin{proposition}\label{rate_trigonometric_GL}
Consider the estimator $\widehat b'$ computed in the trigonometric basis on $I = [0,1]$ with $0 < f_0\leqslant f(x)\leqslant\|f\|_{\infty} <\infty$ under Assumption \ref{assumption_Psi}$(m)$. Moreover, assume that there exists $\kappa > 0$ such that $\mathbb E(\exp(\kappa\varepsilon_{1}^{2})) <\infty$ and that $b'$ is square-integrable on $I$. If $b\in W^{\rm per}(\beta, L)$ with $\beta >1$, then
\begin{displaymath}
\mathbb E(\|\widehat b' - b'\|_{n}^{2})
\leqslant\mathfrak c(f_0,\|f\|_{\infty},L)n^{-2(\beta - 1)/(2\beta + 1)}.
\end{displaymath}
\end{proposition}
\noindent
Therefore, our data driven estimator automatically reaches the optimal rate, up to a multiplicative constant, in the compactly supported setting associated to the trigonometric basis. For the Hermite case, we obtain the following bound.
%


%
\begin{proposition}\label{rate_Hermite_GL}
Consider the estimator $\widehat b'$ computed in the Hermite basis on $I = {\mathbb R}$ under Assumptions \ref{assumption_f} and \ref{assumption_Psi}$(m+1)$. Assume also that $b'$ is square integrable on $I$, $\mathbb E[b'(X_1)^4] <\infty$ and that there exists $\kappa > 0$ such that $\mathbb E(\exp(\kappa\varepsilon_{1}^{2})) <\infty$. If $\|\Psi_m^{-1}\|_{\rm op}\lesssim m^\gamma$ for every $m\in \{1,\dots,n\}$, and if $b\in W_{H}^{s}(D)$ with $s > 2\gamma + 9/4$, then
\begin{displaymath}
\mathbb E(\|\widehat b' - b'\|_{n}^{2})
\leqslant
\mathfrak c(D,s,\|f\|_{\infty},\sigma^2)n^{-2(s - 1 -\gamma)/(2s + 1)}.
\end{displaymath}
\end{proposition}
\noindent
As a consequence, the Hermite estimator also reaches automatically the best rate we could expect, in the difficult context of non compact setting, but under stronger conditions. Note again that the general Theorem states that the data driven estimator performs the bias-variance compromise, whatever the effective orders of the terms are; this is why the estimator in the Hermite basis can numerically perform even better than in the trigonometric basis, see the next section. 
%


%
\section{A numerical insight on the method}
\begin{table}[h!]
\begin{tabular}{cc||cc|cc|cc}
 & & \multicolumn{2}{c|}{$n=250$} & \multicolumn{2}{c|}{$n=1000$} & 
 \multicolumn{2}{c}{$n=4000$} \\
 & &  Herm & Trigo & Herm & Trigo & Herm & Trigo\\ \hline
 $b_1$ &  100 MSE &  $0.91_{(0.50)}$ & $0.82_{(0.46)}$ & $0.23_{(0.13)}$ & $0.23_{(0.13)}$ & $0.06_{(0.03)}$ & $0.07_{(0.03)}$\\
 & dim & $10.7_{(1.30)}$ & $4.94_{(0.93)}$ & $13.1_{(1.46)}$ & $6.13_{(1.30)}$ & $15.2_{(2.05)}$ & $7.40_{(1.45)} $\\
 $b'_1$ & 100 MSE & $11.1_{(6.27)}$ & $8.75_{(5.11)}$ & $2.97_{(1.79)}$ & $3.16_{(1.92)}$ & $0.85_{(0.47)}$ &  $1.24_{(0.62)}$\\
 & dim & $10.8_{(1.01)}$ & $4.76_{(0.76)}$ & $12.9_{(1.37)}$ & $5.99_{(1.12)}$ & $15.2_{(2.01)}$ & $7.30_{(1.34)}$\\ \hline
 $b_2$ & 100 MSE & $0.18_ {(0.17)}$ & $0.47_{(0.34)}$ & $0.05_{(0.04)}$ & $0.12_{(0.08)}$ & $0.01_{(0.01)}$ & $ 0.03_{(0.02)}$\\
 & dim & $2.15_{(0.47)}$ & $3.08_{(0.91)}$ & $2.15_{(0.47)}$ & $3.08_{(0.91)}$ & $2.18_{(0.58)}$ &  $3.46_{(0.91)}$\\
 $b'_2$  & 100 MSE & $0.18_{(0.19)}$ & $1.77_{(1.07)}$ & $0.05_{(0.05)}$ & $0.58_{(0.36)}$ & $0.01_{(0.01)}$ & $ 0.16_{0.12)}$\\
 & dim & $2.26_{(0.54)}$ & $2.50_{(0.61)}$ & $2.26_{(0.54)}$ & $3.01_{(0.60)}$ & $2.22_{(0.55)}$ & $3.28_ {(0.56)}$\\ \hline 
 $b_3$ & MSE & $1.14_{(0.91)}$ & $0.64_{(0.55)}$ & $0.25_{(0.13)}$ & $0.19_{(0.10)}$ & $0.06_{(0.03)}$ & $0.06_{(0.03)}$\\ 
 & dim & $ 11.3_{(1.15)}$ & $3.69_{(1.16)}$ & $14.1_{(1.35)}$ & $5.12_{(1.77)}$ & $16.7_{(1.44)}$ & $7.18_{(2.07)}$\\
 $b'_3$ & 100 MSE & $18.8_{24.7)}$ & $4.96_{(9.84)}$ & $3.89_ {(2.69)}$ & $2.31_{(2.33)}$ & $1.08_ {(0.71)}$ & $1.21_{(0.73)}$\\
 & dim   & $11.4_{(1.15)}$ & $3.53_{(1.01)}$ & $14.2_{(1.30)}$ & $4.88_{(1.43)}$ & $16.7_{(1.42)}$ & $6.62_{(1.66)}$\\ \hline
 $b_4$ & 100 MSE & $0.68_{(0.37)}$ & $0.82_{(0.44)}$ & $0.21_{(0.11)}$ & $0.24_{(0.12)}$ & $0.06_{(0.03)}$ & $0.07_{(0.03)}$\\
 & dim & $8.71_{(2.21)}$ & $5.20_{(0.92)}$ & $11.6_{(2.21)}$ & $6.30_{(1.32)}$ & $14.9_{(2.43)}$ & $ 7.42_{(1.41)}$\\
 $b'_4$ & 100 MSE & $7.60_{(3.48)}$ & $10.8_{(5.48)}$ & $2.65_{(1.18)}$ & $3.40_{(1.57)}$ & $ 0.93_{(0.43)}$ & $1.17_{(0.55)}$\\
 & dim & $9.17_{(2.02)}$ & $ 5.11_{(0.75)}$ & $11.8_{(1.91)}$ & $6.17_{(1.11)}$ & $15.2_{(1.99)}$ & $7.28_{(0.94)}$\\
\multicolumn{8}{c}{}\\
\end{tabular}
\caption{"MSE": MSE of the oracle (for $b$ and $b'$, defined by (\ref{b1b4})) multiplied by 100 with standard deviations (Std) multiplied by 100 in small parenthesis. "dim": mean of the oracle dimensions with Std in small parenthesis. Columns "Herm" correspond to the Hermite basis, columns "Trigo" to the half trigonometric basis. 400 repetitions}\label{tab1}
\end{table}

\begin{table}[h!]
\begin{tabular}{cc||ccc|ccc|ccc}
 & & \multicolumn{3}{c|}{$n=250$} & \multicolumn{3}{c|}{$n=1000$} & 
 \multicolumn{3}{c}{$n=4000$} \\
 & &  Herm & Trigo &  NWO & Herm & Trigo & NWO &  Herm & Trigo & NWO \\ \hline
 $b'_1$ &  MSE & 78.7  & 13.6 & 335 &  22.0 & 7.13 & 128 & 1.73 & 3.11 & 46.8 \\
& std & 480 & 10 & 12.5 & 37 & 28 &  34 &  2 &  6 &  11 \\
& dim & 12.5 & 9.1 & 0.13 & 6.0 & 10.3 & 0.10 &  19.2 & 13.7 & 0.08\\ \hline
$b'_2$ &  MSE & 0.27 & 4.31 &  3.98 & 0.06 & 1.07 &  1.87 &  0.03 & 0.27 & 0.89 \\
& std & 0.6 & 5.1 & 1.9 & 0.09 & 0.5 & 1 & 0.05 & 0.3 & 0.3 \\
& dim & 2.04 & 4.5 &  0.32 & 2.02 & 5.1 & 0.26 & 2.5 & 5.9 &  0.20 \\ \hline
$b'_3$ &  MSE & 20.4 & 17.3 &  62.1 & 5.87 & 9.75 & 23.9 & 1.71 & 11.8 & 9.03 
\\
& std & 22 & 12 & 25 & 6 & 34 & 10 & 0.9 & 39 & 3 \\
& dim & 12.6 & 5.7 & 0.24 & 15.9 & 9.2 &   0.18 & 19.2 & 16.8 &  0.14 \\ \hline
$b'_4$ & MSE &  22.3 & 37.7 & 36.1 & 7.61 & 6.07 & 15.1 & 3.35 & 2.88 &  6.60 \\
& std & 28 & 14 & 14 & 16 & 4 &  5 & 14 & 1 & 1 \\
& dim & 12.2 & 6.83 &  0.19 & 15.8 & 10.0 & 0.15 & 19.0 & 11.2 & 0.12\\
\multicolumn{11}{c}{}\\
\end{tabular}
\caption{"MSE": MSE multiplied by 100 for the estimation of $b'$, obtained by GL method and defined by (\ref{GLEstim})) with Hermite basis (columns "Herm"), trigonometric basis (columns "Trigo") and for the derivative of NW estimator with oracle bandwidth (columns "NWO"), with their standard deviations multiplied by 100 ("std"). "dim": mean of the selected dimensions or oracle bandwidths. 400 repetitions and 3 sample sizes 250, 1000, 4000.}\label{tab2}
\end{table}

We consider the four simple functions
\begin{equation}\label{b1b4}
b_1(x) = 2\sin(\pi x),\quad
b_2(x) =0.5 x \exp(-x^2/2),\quad
b_3(x) = x^2,\quad
b_4(x) = 4x/(1 + x^2),
\end{equation}
and we generate $Y_i = b(X_i) +\varepsilon_i$, $i = 1,\dots,n$, for i.i.d. $X_i\sim\mathcal N(0,1)$, independent of the i.i.d $\varepsilon_i\sim\mathcal N(0,\sigma^2)$, with $\sigma = 0.25$ and $b = b_j$, $j = 1,\dots,4$. For each sample, we compute the least squares estimator of $b$, together with its derivative, in the Hermite and in the trigonometric bases. We use what we call the "half" trigonometric basis, relying on functions $x\mapsto\sqrt 2\sin(\pi jx)$ and $x\mapsto\sqrt 2\cos(\pi jx)$ on $[0,1]$, rescaled to the interval $[a,b]$. For each function $b$, we considered $K = 400$ repetitions, and samples of sizes $n = 250, 1000$ and $4000$. \\

Due to the theoretical difficulty of the question, in a model which looked rather simple at first sight, we first wondered if the strategy consisting in derivating the least squares regression estimator was relevant, and if numerical investigations could bring information about a good estimation strategy. This is why we first look at oracles: we compute all estimators of the collection and use the knowledge of the true function to compute the error associated to all of them in order to select the best one (the resulting "estimator" is called "oracle") in term of its $\mathbb L^2$-distance to the true. We also look at the associated dimensions.

We compute the $\mathbb L^2$-distance between each oracle estimator of $b$ and the true $b$, and each oracle estimator of $b'$ and the true $b'$, on an interval with bounds corresponding to the 3\% and 97\% quantiles of the $X_i$'s, and finally take the average on 400 independent paths generated. Moreover, we average the selected dimensions for each sample. In other words, we retain the dimension and error corresponding in each case to the smallest error, and compute means and standard deviations. The results are reported in Table \ref{tab1}.\\

\begin{figure}[h!]
\begin{tabular}{c}
\includegraphics[width=15cm,height=3.5cm]{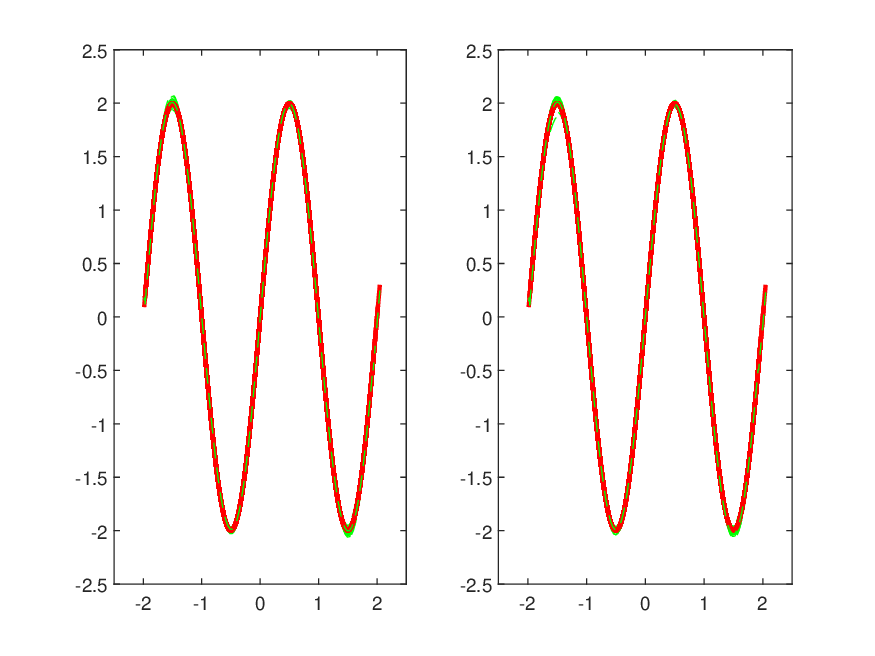}\\
\includegraphics[width=15cm,height=3.5cm]{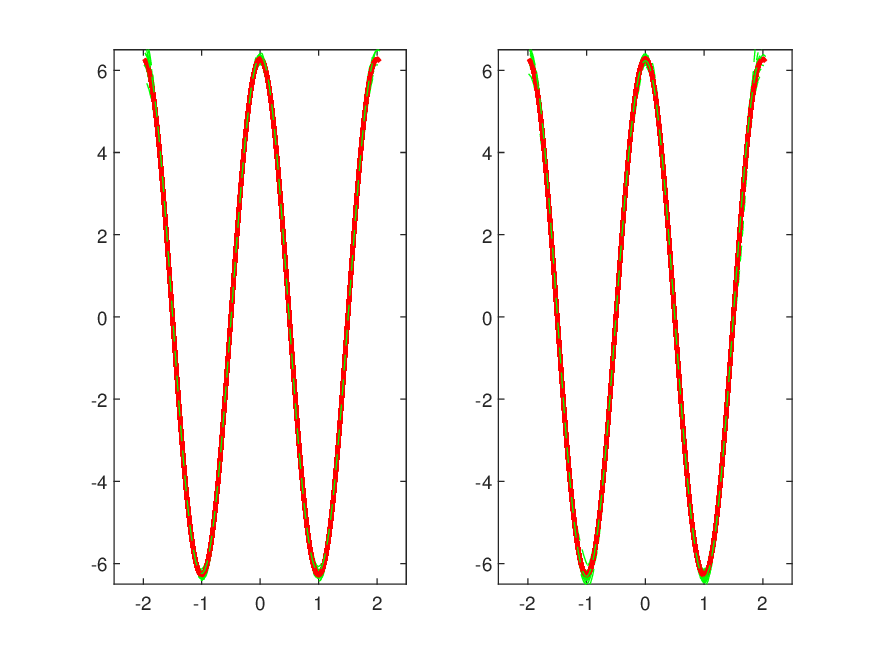}\\
\end{tabular}
\caption{40 estimated functions (dotted green) compared to the true (bold red), $n = 1000$. First line $b_1$ (see (\ref{b1b4})) by penalisation, 100 MSE = 0.26 and 0.29, mean selected dimensions: 12.2 and 11.2. Second line $b'_1$ with GL method, 100 MSE = 4.71 and 6.47, mean selected dimensions: 16.1 and 10.5. Left Hermite basis, right trigonometric basis.}\label{fig1}
\end{figure}

\begin{figure}[h!]
\begin{tabular}{c}
\includegraphics[width=15cm,height=3.5cm]{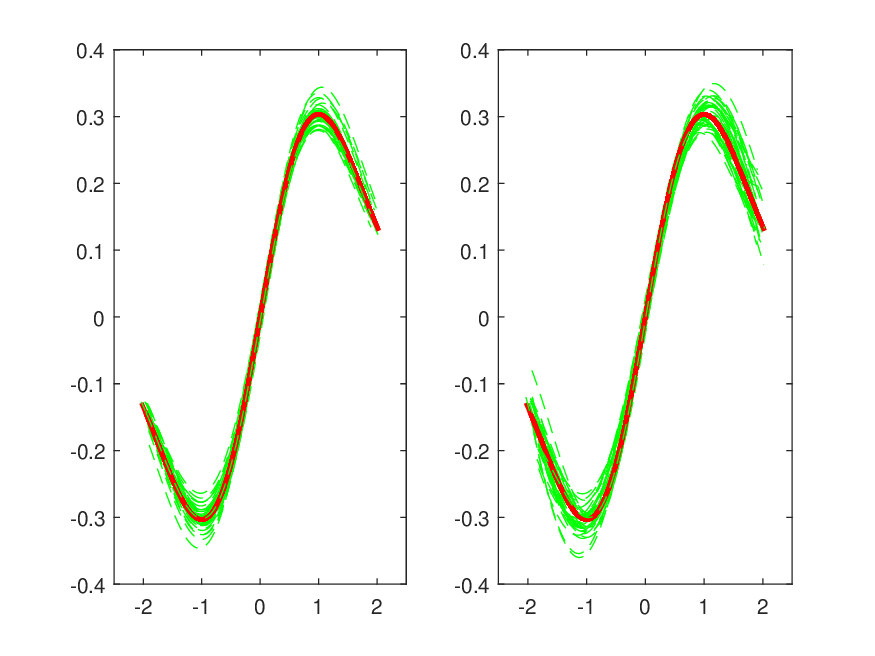}\\
\includegraphics[width=15cm,height=3.5cm]{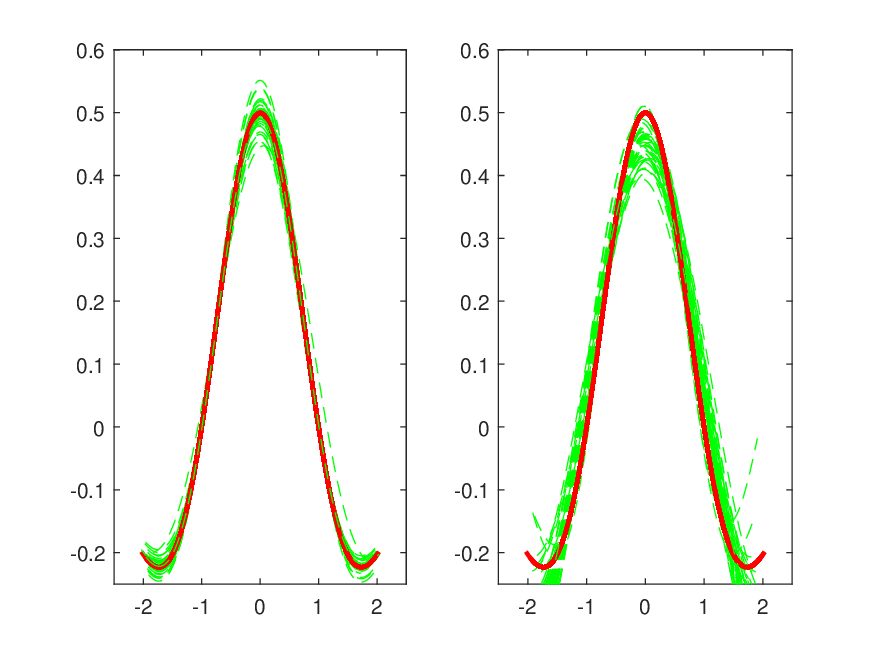}\\
\end{tabular}
\caption{40 estimated functions (dotted green) compared to the true (bold red), $n = 1000$. First line $b_2$ (see (\ref{b1b4})) by penalisation, 100 MSE = 0.07 and 0.18, mean selected dimensions: 2.1 and 5.3. Second line $b'_2$ with GL method, 100 MSE = 1.08 and 1.12, mean selected dimensions: 2.05 and 5.05. Left Hermite basis, right trigonometric basis.}\label{fig2}
\end{figure}

Table \ref{tab1} shows that the MSE decreases when $n$ increases, in all cases, and whether $b$ or $b'$ is estimated.  We can notice that function $b_1$ is chosen to be easy for the trigonometric basis, but the Hermite basis performs very well in this case too. On the contrary, the function $b_2$ is supposed to be easy for the Hermite basis, and it is, with small selected dimensions, but the trigonometric basis has a much worse performance. For the two other functions, the two bases perform similarly, with decreasing error when increasing $n$ and simultaneous increase of the selected dimensions. This is expected from the theoretical formula giving the asymptotic optimal choice of $m$ as a power of $n$, at least when the function under estimation does not admit a finite decomposition in the basis (like $b_1$ for the trigonometric basis or $b_2$ for the HErmite basis).\\
What is  puzzling in these results is the comparison of oracle dimensions for $b$ and $b'$: in each case, they are almost the same. This suggests to keep the selected model obtained for estimation of $b$ by classical penalisation, and use this for $b'$ as well. This is coherent with the fact that the order of the optimal dimension are the same for $b$ and $b'$ when using the trigonometric basis.
\begin{figure}[h!]
\begin{tabular}{c}
\includegraphics[width=15cm,height=3.5cm]{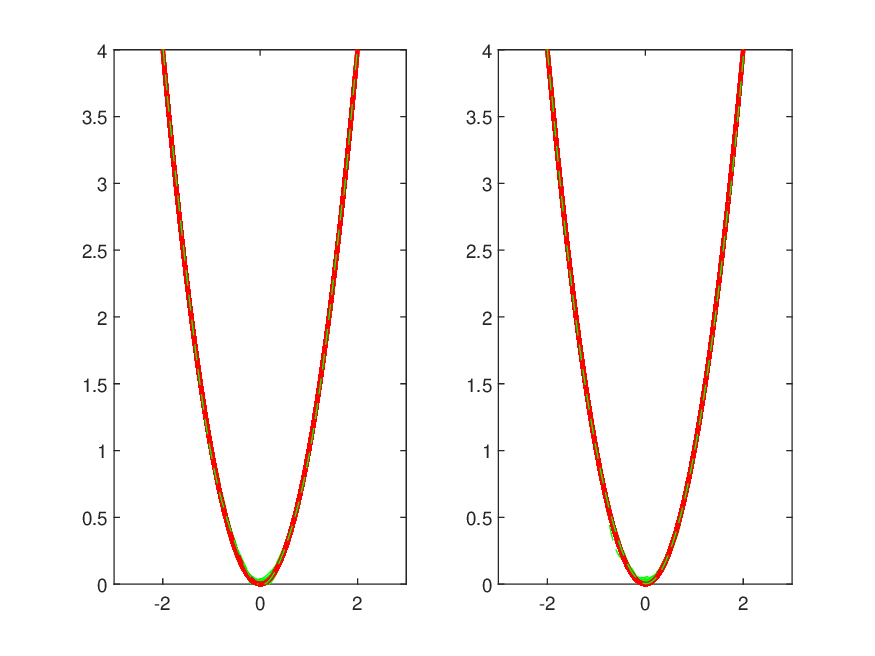}\\
\includegraphics[width=15cm,height=3.5cm]{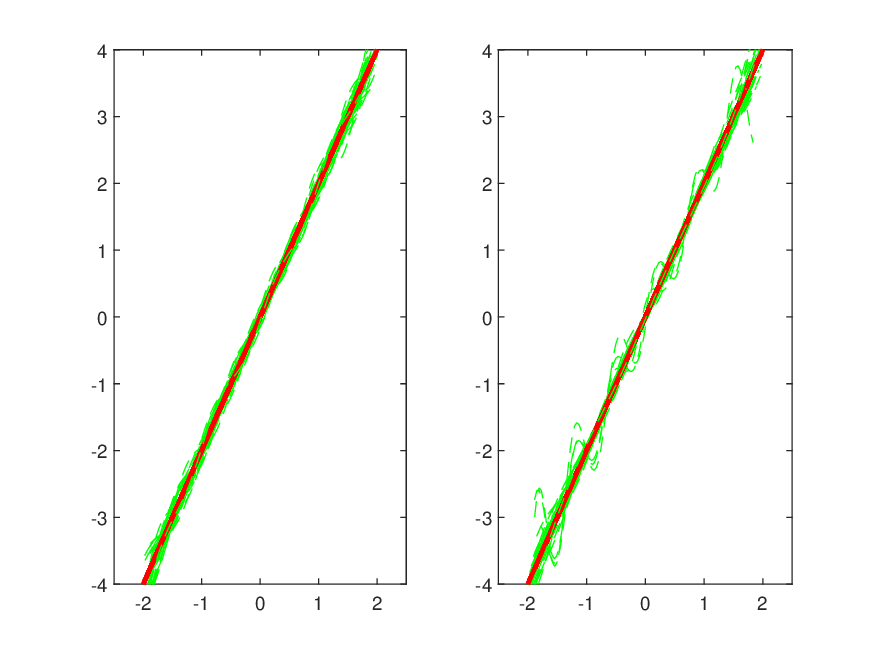}\\
\end{tabular}
\caption{40 estimated functions (dotted green) compared to the true (bold red), $n = 1000$. First line $b_3$ (see (\ref{b1b4})) by penalisation, 100 MSE = 0.30 and 0.30, mean selected dimensions: 1.05 and 9.5. Second line $b'_3$ with GL method, 100 MSE = 5.04 and 6.89, mean selected dimensions: 15.97 and 8.65. Left Hermite basis, right trigonometric basis.}\label{fig3}
\end{figure}

We also implement the Goldenshluger-Lespki method with $\kappa_1=2\kappa_0$ and $\kappa_0=1$ for both Hermite and Trigonometric bases. We compare the performance or our estimator to the derivative of the Nadaraya-Watson estimator (with Gaussian kernel) with recursive computation in the spirit of Bercu~{\it el al.}~\cite{BCD19}. However, we took a fixed oracle bandwidth because their proposal of recursively varying bandwith $h_k=k^{-\alpha}$ with $\alpha=0.3$ does not work in our case and the method of selection of $\alpha$ is not given in their paper. So, we give the results for the best possible choice. The results are given in Table \ref{tab2}, and confirm that our method performs well. Obviously, the selected dimension are larger than the ones pointed by oracles in Table \ref{tab1}, and it is possible that other couples $(\kappa_0, \kappa_1)$ may be better. But it is now documented that the Goldenshluger and Lepski method is difficult to calibrate. The kernel estimator is generally less efficient in spite of its ideal bandwidth choice, even if its error gets very comparable to the other estimators when $n$ increases. The orders associated with the MSE given are more concretely illustrated in Figures \ref{fig1} to \ref{fig4}, and we can see that the estimations are very satisfactory. The estimators of the regression function $b$ is obtained by penalisation as in Comte and Genon-Catalot~\cite{CGC20}. The Hermite basis performs globally very well, even to estimate a straight line as in Figure \ref{fig3}, which seems much more difficult for the trigonometric basis. Lastly, Figure \ref{fig4} shows that there are a lot of side-effects for the estimation of $b'_4$, but it is probably due to "heavy tail" effects since it does not occur for $b'_2$ which has faster decrease, see Figure \ref{fig2}

\begin{figure}[h!]
\begin{tabular}{c}
\includegraphics[width=15cm,height=3.5cm]{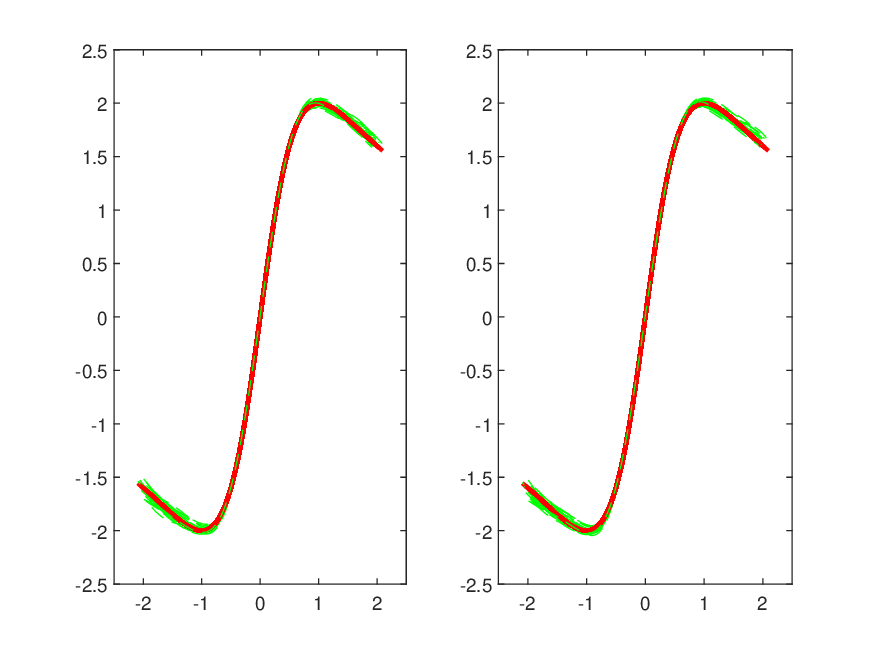}\\
\includegraphics[width=15cm,height=3.5cm]{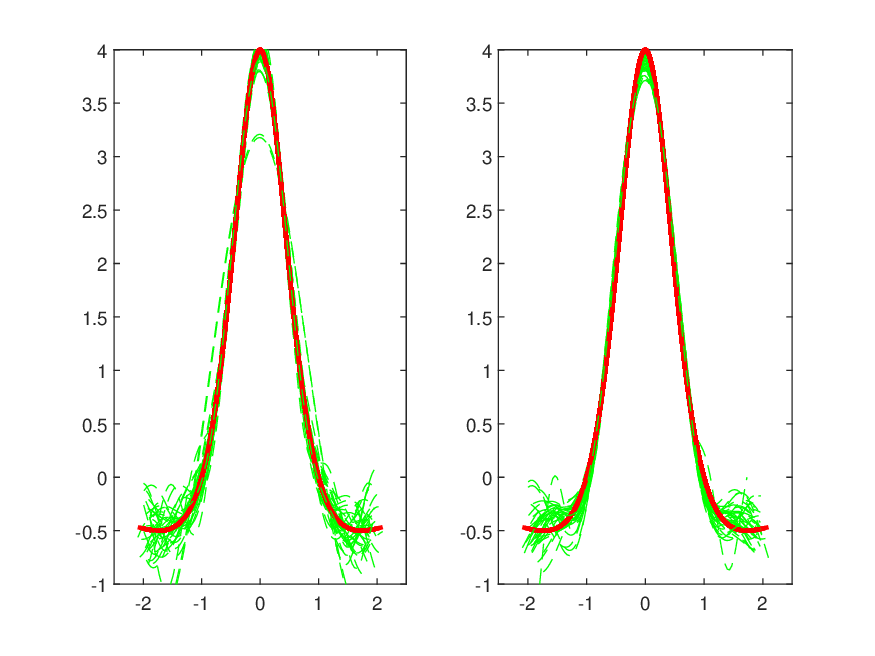}\\
\end{tabular}
\caption{40 estimated functions (dotted green) compared to the true (bold red), $n = 1000$. First line $b_4$  (see (\ref{b1b4}) by penalisation, 100 MSE = 0.29 and 0.33, mean selected dimensions: 9.85 and 10.5. Second line $b'_4$ with GL method, 100 MSE = 11.8 and 6.28 mean selected dimensions: 16.1 and 10.6. Left Hermite basis, right trigonometric basis.}\label{fig4}
\end{figure}
%


%
\section{Concluding Remarks}
In  this paper, we have defined two projection estimators of the derivative of $b$, based on observations $(X_i, Y_i)_{1\leqslant i\leqslant n}$ drawn from Model (\ref{model}). Under weak assumptions, we prove two simple risk bounds allowing to understand the differences between the two strategies. More elaborate bounds under a stability condition introduced by Cohen {\it et al.}\cite{CDL13} are also given. These results are  illustrated in the context of trigonometric, Legendre, Laguerre and Hermite bases, the first two ones being compactly supported, but not the last ones. Optimal rates are recovered with our method in the context of the trigonometric basis, but our setting is more general, which is a novelty. Lastly, we propose a model selection procedure and prove a general risk bound for the adaptive estimator. It automatically reaches the optimal rate in the trigonometric case. These last results are also new and not straightforward.\\
The method we propose is implemented and the few numerical experiments conducted shows that our estimator works well, in particular compared the derivative of a Nadaray-Watson kernel estimators; but more comparisons would be useful to confirm these practical results.  Our investigation for simple examples shows that  the collection of estimators always contains a good one. It also suggests that keeping for the estimation of $b'$ the dimension selected for $b$ may be a safe simple strategy. Several extensions of this work may be of obvious interest: explanatory variables with higher dimensions may be studied in the spirit of Dussap~\cite{Duss22}, as well as higher order of derivatives, possibly only in the compactly supported case to begin with. Extensions to dependent contexts (the case of autoregressive models or the case of diffusion models) are also to be considered. As our proofs rely on results conditionally to the $X_i$'s, thanks to their independence with the noise, dependency should imply theoretical difficulties.
%


%
\section{Proofs}
All the properties on matrix norms used in proofs are reminded in the Subsubsection \ref{section_notations} at the end of the introduction.

Moreover, we denote by $\mathbb E_{\mathbf X}$ the conditional expectation given ${\mathbf X}=(X_1, \dots, , X_n)$.


%
\subsection{Proof of Proposition \ref{rough_risk_bound_estimator_1}}
Note first that
\begin{displaymath}
\widehat b_{m}^{\prime,1}(\mathbf X) =
\widehat\Phi_m'(\widehat\Phi_{m}^{*}\widehat\Phi_m)^{-1}\widehat\Phi_{m}^{*}\mathbf Y,
\end{displaymath}
and since $Y_i = b(X_i) +\varepsilon_i$, $X_i$ is independent of $\varepsilon_i$, and $\mathbb E(\varepsilon_i) = 0$ for every $i\in\{1,\dots,n\}$,
\begin{displaymath}
\mathbb E_{\mathbf X}[\widehat b_{m}^{\prime,1}(\mathbf X)] =
\widehat\Phi_m'(\widehat\Phi_{m}^{*}\widehat\Phi_m)^{-1}\widehat \Phi_{m}^{*}b(\mathbf X)
\end{displaymath}
and
\begin{displaymath}
\mathbb E_{\mathbf X}(\langle
\widehat\Phi_m'(\widehat\Phi_m^*\widehat\Phi_m)^{-1}\widehat\Phi_{m}^{*}\varepsilon, \,  
\widehat\Phi_m'(\widehat\Phi_{m}^{*}\widehat\Phi_m)^{-1}\widehat\Phi_{m}^{*}b(\mathbf X)
- b'(\mathbf X)\rangle_n) = 0.
\end{displaymath}
Then,
\begin{eqnarray*}
 \mathbb E_{\mathbf X}(\|\widehat b_{m}^{\prime,1} - b'\|_n^2)
 & = &
 \frac{1}{n}
 \mathbb E_{\mathbf X}[
 \|\widehat b_{m}^{\prime,1}(\mathbf X) -\mathbb E_{\mathbf X}[\widehat b_{m}^{\prime,1}(\mathbf X)] +
 \mathbb E_{\mathbf X}[
 \widehat b_{m}^{\prime,1}(\mathbf X)] - b'(\mathbf X)\|_{2,n}^{2}]\\
 & = &
 \frac{1}{n}[\mathbb E_{\mathbf X}(\|\widehat\Phi_m'(\widehat\Phi_m^*\widehat\Phi_m)^{-1}
 \widehat\Phi_{m}^{*}\varepsilon\|_{2,n}^{2}) +
 \|\widehat\Phi_m'(\widehat\Phi_{m}^{*}\widehat\Phi_m)^{-1}
 \widehat\Phi_{m}^{*}b(\mathbf X) - b'(\mathbf X)\|_{2,n}^2]\\
 & =: &
 \frac{1}{n}(A + B).
\end{eqnarray*}
On the one hand,
\begin{eqnarray*} 
 A & = & 
 \mathbb E_{\mathbf X}\left[\varepsilon^*\widehat\Phi_m(\widehat\Phi_{m}^{*}\widehat\Phi_m)^{-1}
 (\widehat\Phi_m')^*\widehat\Phi_m'(\widehat\Phi_{m}^{*}\widehat\Phi_m)^{-1}\widehat\Phi_{m}^{*}\varepsilon\right]\\
 & = & \sigma^2{\rm Tr}\left[ \widehat\Phi_m(\widehat\Phi_{m}^{*}\widehat\Phi_m)^{-1}(\widehat\Phi_m')^*\widehat\Phi'_m(\widehat\Phi_m^*\widehat \Phi_m)^{-1} \widehat \Phi_m^* \right]\\
 & = & \sigma^2 {\rm Tr}\left[(\widehat\Phi_m^*\widehat \Phi_m)^{-1} (\widehat \Phi'_m)^*\widehat\Phi'_m\right].
\end{eqnarray*}
On the other hand,
\begin{displaymath}
B\leqslant
3\|\widehat\Phi_m'(\widehat\Phi_{m}^{*}\widehat\Phi_m)^{-1}\widehat\Phi_{m}^{*}b(\mathbf X) - b_m'(\mathbf X)\|_{2,n}^{2} +
3\|b_m'(\mathbf X) - (b')_m(\mathbf X)\|_{2,n}^{2} +
3\|(b')_m(\mathbf X) - b'(\mathbf X)\|_{2,n}^{2}.
\end{displaymath}
So,
\begin{displaymath}
\frac{1}{n}
\mathbb E(B)\leqslant
3\inf_{t\in\mathcal S_m}\|t - b'\|_{f}^{2} +
3\|b_m' - (b')_m\|_{f}^{2} +
3\mathbb E(C)
\end{displaymath}
with
\begin{displaymath}
C =
\frac{1}{n}
\|\widehat\Phi_m'(\widehat\Phi_{m}^{*}\widehat\Phi_m)^{-1}\widehat\Phi_{m}^{*}b(\mathbf X) - b_m'(\mathbf X)\|_{2,n}^{2}.
\end{displaymath}
In order to manage this last term, note that
%
$b_m(\mathbf X) =
\sum_{j = 1}^{m}\langle b,\varphi_j\rangle\varphi_j(\mathbf X) =
\widehat\Phi_m
(\langle b,\varphi_j\rangle)_{1\leqslant j\leqslant m}.
$
%
So,
\begin{equation}\label{rough_risk_bound_estimator_1_1}
(\langle b,\varphi_j\rangle)_{1\leqslant j\leqslant m} =
(\widehat\Phi_{m}^{*}\widehat\Phi_m)^{-1}\widehat\Phi_{m}^{*}b_m(\mathbf X)
\end{equation}
and then,
\begin{displaymath}
b_m'(\mathbf X) =
\sum_{j = 1}^{m}\langle b,\varphi_j\rangle\varphi_j'(\mathbf X) =
\widehat\Phi_m'
(\langle b,\varphi_j\rangle)_{1\leqslant j\leqslant m} =
\widehat\Phi_m'
(\widehat\Phi_{m}^{*}\widehat\Phi_m)^{-1}\widehat\Phi_{m}^{*}b_m(\mathbf X).
\end{displaymath}
Therefore,
\begin{displaymath}
C =
\frac{1}{n}
\|\widehat\Phi_m'(\widehat\Phi_{m}^{*}\widehat\Phi_m)^{-1}\widehat\Phi_{m}^{*}(b(\mathbf X) - b_m(\mathbf X))\|_{2,n}^{2}
\leqslant
\|\widehat\Phi_m'(\widehat\Phi_{m}^{*}\widehat\Phi_m)^{-1}\widehat\Phi_{m}^{*}\|_{{\rm op}}^{2}\|b - b_m\|_{n}^{2}.
\end{displaymath}
This concludes the proof. $\Box$
%


%
\subsection{Proof of Proposition \ref{increase_variance}}
The $\varphi_j$'s do not depend on $m$, so the $\mathcal S_m$'s are nested spaces, and then to establish the following equality is sufficient in order to conclude:
\begin{equation}\label{increase_variance_1}
\mathbb E_{\mathbf X}\left(\sup_{t\in\mathcal S_m :\|t\|_n = 1}
\nu_n(t)^2\right) =
\frac{\sigma^2}{n}{\rm Tr}\left[\widehat\Psi_{m}^{-1/2}\widehat\Psi_m'\widehat\Psi_{m}^{-1/2}\right]
\end{equation}
with
\begin{displaymath}
\widehat\Psi_m' :=
\frac{1}{n}(\widehat\Phi_m')^*\widehat\Phi_m'
\textrm{ and }
\nu_n(t) :=\langle\varepsilon,t'\rangle_n.
\end{displaymath}
Let us prove Equality (\ref{increase_variance_1}). Consider $t\in\mathcal S_m$ such that $\|t\|_n = 1$. Necessarily (and sufficiently),
\begin{displaymath}
t =\sum_{j = 1}^{m}a_j\varphi_j
\end{displaymath}
with $a =\widehat\Psi_{m}^{-1/2}u$ and $u\in\mathbb R^m$ such that $\|u\|_{2,m} = 1$. Then,
\begin{displaymath}
t =\sum_{k = 1}^{m}u_k\sum_{j = 1}^{m}[\widehat\Psi_{m}^{-1/2}]_{j,k}\varphi_j
\end{displaymath}
and, thanks to Cauchy-Schwarz's inequality,
\begin{displaymath}
\nu_n(t)^2 =
\langle\varepsilon,t'\rangle_{n}^{2} =
\left[\sum_{k = 1}^{m}u_k
\left\langle\varepsilon,\sum_{j = 1}^{m}[\widehat\Psi_{m}^{-1/2}]_{j,k}\varphi_j'\right\rangle_n\right]^2
\leqslant
\sum_{k = 1}^{m}
\left\langle\varepsilon,\sum_{j = 1}^{m}[\widehat\Psi_{m}^{-1/2}]_{j,k}\varphi_j'\right\rangle_{n}^{2}.
\end{displaymath}
So,
\begin{displaymath}
\sup_{t\in\mathcal S_m :\|t\|_n = 1}
\nu_n(t)^2 =
\sup_{u\in\mathbb R^m :\|u\|_{2,m} = 1}
\left[\sum_{k = 1}^{m}u_k
\left\langle\varepsilon,\sum_{j = 1}^{m}[\widehat\Psi_{m}^{-1/2}]_{j,k}\varphi_j'\right\rangle_n\right]^2 =
\sum_{k = 1}^{m}
\left\langle\varepsilon,\sum_{j = 1}^{m}[\widehat\Psi_{m}^{-1/2}]_{j,k}\varphi_j'\right\rangle_{n}^{2}.
\end{displaymath}
Therefore, since $\varepsilon_1,\dots,\varepsilon_n$ are i.i.d, centered, and respectively independent of $X_1,\dots,X_n$, and since $\widehat\Psi_{m}^{-1/2}$ and $\widehat\Psi_m'$ are symmetric matrices,
\begin{eqnarray*}
 \mathbb E_{\mathbf X}\left(
 \sup_{t\in\mathcal S_m :\|t\|_n = 1}
 \nu_n(t)^2\right) & = &
 \frac{\sigma^2}{n^2}
 \sum_{k = 1}^{m}
 \sum_{i = 1}^{n}
 \left(\sum_{j = 1}^{m}[\widehat\Psi_{m}^{-1/2}]_{j,k}\varphi_j'(X_i)\right)^2\\
 & = &
 \frac{\sigma^2}{n}
 \sum_{j,k,\ell = 1}^{m}[\widehat\Psi_{m}^{-1/2}]_{j,k}[\widehat\Psi_{m}^{-1/2}]_{\ell,k}
 \langle\varphi_j',\varphi_{\ell}'\rangle_n\\
 & = &
 \frac{\sigma^2}{n}
 \sum_{j,k,\ell = 1}^{m}[\widehat\Psi_{m}^{-1/2}]_{k,j}
 [\widehat\Psi_m']_{j,\ell}[\widehat\Psi_{m}^{-1/2}]_{\ell,k} =
 \frac{\sigma^2}{n}\textrm{Tr}\left[\widehat\Psi_{m}^{-1/2}
 \widehat\Psi_m'\widehat\Psi_{m}^{-1/2}\right].
\end{eqnarray*}
This concludes the proof. $\Box$
%


%
\subsection{Proof of Proposition \ref{rough_risk_bound_estimator_2}}
As in the proof of Proposition \ref{rough_risk_bound_estimator_1},
\begin{eqnarray*}
 \mathbb E_{\mathbf X}(\|\widehat b_{m}^{\prime,2} - b'\|_{n}^{2})
 & = &
 \frac{1}{n}
 \|-\widehat\Phi_m\Delta_{m,m + p}
 (\widehat\Phi_{m + p}^{*}\widehat\Phi_{m + p})^{-1}\widehat\Phi_{m + p}^{*}b(\mathbf X) -
 b'(\mathbf X)\|_{2,n}^{2}\\
 & &
 +\frac{\sigma^2}{n}{\rm Tr}\left[(\widehat\Phi_{m + p}^{*}\widehat\Phi_{m + p})^{-1}\Delta_{m,m + p}^{*}\widehat\Phi_{m}^{*}\widehat\Phi_m\Delta_{m,m + p}\right]\\
 & \leqslant &
 \frac{2}{n}
 [\|-\widehat\Phi_m\Delta_{m,m + p}
 (\widehat\Phi_{m + p}^{*}\widehat\Phi_{m + p})^{-1}\widehat\Phi_{m + p}^{*}b(\mathbf X) - (b')_m(\mathbf X)\|_{2,n}^{2}\\
 & &
 \hspace{7cm} +
 \|(b')_m(\mathbf X) - b'(\mathbf X)\|_{2,n}^{2}]\\
 & &
 +\frac{\sigma^2}{n}{\rm Tr}\left[(\widehat\Phi_{m + p}^{*}\widehat\Phi_{m + p})^{-1}\Delta_{m,m + p}^{*}\widehat\Phi_{m}^{*}\widehat\Phi_m\Delta_{m,m + p}\right].
\end{eqnarray*}
On the one hand, as previously,
\begin{displaymath}
\frac{2}{n}\mathbb E(\|(b')_m(\mathbf X) - b'(\mathbf X)\|_{2,n}^{2}) =
2\|(b')_m - b'\|_{f}^{2}
\leqslant 2\|f\|_{\infty}\inf_{t\in\mathcal S_m}\|t - b'\|^2.
\end{displaymath}
On the other hand, thanks to Equalities (\ref{projection_derivative}) and (\ref{rough_risk_bound_estimator_1_1}),
\begin{displaymath}
(b')_m(\mathbf X) =
-\widehat\Phi_m\Delta_{m,m + p}\left(\langle b, \varphi_j\rangle\right)_{1\leqslant j\leqslant m+p} =  -\widehat\Phi_m\Delta_{m,m+p} (\widehat\Phi^*_{m+p}\widehat\Phi_{m+p})^{-1}\widehat\Phi_{m+p}^{*}b_{m+p}({\mathbf X}).
\end{displaymath}
Then,
\begin{eqnarray*}
 & &
 \|-\widehat\Phi_m\Delta_{m,m + p}(\widehat\Phi_{m + p}^{*}\widehat\Phi_{m + p})^{-1}\widehat\Phi_{m + p}^{*}b(\mathbf X) - (b')_m(\mathbf X)\|_{2,n}^{2}\\
 & &
 \hspace{4cm}
 = \|-\widehat\Phi_m\Delta_{m,m + p} (\widehat\Phi_{m+p}^{*}\widehat\Phi_{m + p})^{-1}\widehat\Phi_{m + p}^{*}(b(\mathbf X) - b_{m + p}(\mathbf X))\|_{2,n}^{2}.
\end{eqnarray*}
This concludes the proof. $\Box$
%


%
\subsection{Proof of Proposition \ref{elaborate_risk_bound_estimator_1}}
Consider the following set
\begin{displaymath}
\Omega_m :=
\left\{|\|t\|_{n}^{2}/\|t\|_{f}^{2} - 1|\leqslant\frac{1}{2}
\textrm{ $;$ }\forall t\in\mathcal S_m\right\} =
\left\{\|\Psi_m^{-1/2}\widehat\Psi_{m}\Psi_m^{-1/2} -\mathbf I_m\|_{\normalfont{\textrm{op}}}\leqslant\frac{1}{2}\right\}.
\end{displaymath}
The proof relies on the following lemma, borrowed from Comte and Genon-Catalot \cite[Lemma 5]{CGC20}.
\begin{lemma}\label{Omega}
Under Assumption \ref{assumption_Psi}($m$), there exists a deterministic constant $\mathfrak c_{\ref{Omega}} > 0$, not depending on $m$ and $n$, such that
\begin{displaymath}
\mathbb P(\Omega_{m}^{c})
\leqslant\frac{\mathfrak c_{\ref{Omega}}}{n^8}
\quad\textrm{ and }\quad 
\mathbb P(\Lambda_{m}^{c})
\leqslant\frac{\mathfrak c_{\ref{Omega}}}{n^8}.
\end{displaymath}
\end{lemma}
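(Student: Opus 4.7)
The plan is to prove both tail bounds by a matrix concentration argument applied to the rank-one decomposition of $\widehat\Psi_m$, then to deduce $\mathbb P(\Lambda_m^c)$ from $\mathbb P(\Omega_m^c)$ by a simple deterministic inclusion.

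First, I would rewrite $\Omega_m$ as a sum. Setting $\Phi_m(x) := (\varphi_1(x),\dots,\varphi_m(x))^*$ and $Y_i := \Psi_m^{-1/2}\Phi_m(X_i)$, the identity $\widehat\Psi_m = n^{-1}\sum_{i=1}^n \Phi_m(X_i)\Phi_m(X_i)^*$ gives
\[
\Psi_m^{-1/2}\widehat\Psi_m\Psi_m^{-1/2} - I_m = \frac{1}{n}\sum_{i = 1}^{n}Z_i, \qquad Z_i := Y_iY_i^* - I_m,
\]
where the $Z_i$ are i.i.d., centered, self-adjoint random matrices. Since $\|Y_i\|_2^2 = \Phi_m(X_i)^*\Psi_m^{-1}\Phi_m(X_i) \leqslant \mathfrak L(m)\|\Psi_m^{-1}\|_{\mathrm{op}}$ almost surely, I set $L := \mathfrak L(m)(\|\Psi_m^{-1}\|_{\mathrm{op}}\vee 1)$ and verify that $\|Z_i\|_{\mathrm{op}} \leqslant L$. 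For the second-moment bound, using $\mathbb E[Y_iY_i^*]=I_m$,
\[
\mathbb E[Z_i^2] = \mathbb E[\|Y_i\|_2^2\, Y_iY_i^*] - I_m \preceq L\cdot I_m,
\]
so $\|\mathbb E[Z_i^2]\|_{\mathrm{op}}\leqslant L$ as well.

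Second, I would invoke a matrix Bennett inequality (e.g.\ Tropp's theorem for sums of bounded independent self-adjoint random matrices) to obtain
\[
\mathbb P(\Omega_m^c) = \mathbb P\!\left(\left\|\frac{1}{n}\sum_{i=1}^n Z_i\right\|_{\mathrm{op}} > \tfrac{1}{2}\right) \leqslant 2m\,\exp\!\left(-\frac{n}{L}\,h(1/2)\right),
\]
with $h(u) := (1+u)\log(1+u)-u$. Under Assumption \ref{assumption_Psi}($m$), $L \leqslant \mathfrak c n/(2\log n)$, and the constant $\mathfrak c = (3\log(3/2)-1)/9 = 2h(1/2)/9$ is precisely calibrated so that $(n/L)h(1/2) \geqslant 9\log n$. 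Assuming $m\leqslant n$ (which holds since the relevant indices satisfy this), this yields
\[
\mathbb P(\Omega_m^c) \leqslant 2m\cdot n^{-9} \leqslant 2\,n^{-8},
\]
which is the first bound with, say, $\mathfrak c_{\ref{Omega}} = 2$.

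Third, I would derive the bound on $\Lambda_m^c$ from the deterministic inclusion $\Omega_m \subset \Lambda_m$. On $\Omega_m$, the two-sided Rayleigh-quotient bound gives $\widehat\Psi_m \succeq \tfrac12\Psi_m$, hence $\widehat\Psi_m^{-1} \preceq 2\Psi_m^{-1}$ and $\|\widehat\Psi_m^{-1}\|_{\mathrm{op}} \leqslant 2\|\Psi_m^{-1}\|_{\mathrm{op}}$. Therefore $\mathfrak L(m)(\|\widehat\Psi_m^{-1}\|_{\mathrm{op}}\vee 1) \leqslant 2L \leqslant \mathfrak c\, n/\log n$, which is exactly the definition of $\Lambda_m$. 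Thus $\mathbb P(\Lambda_m^c)\leqslant \mathbb P(\Omega_m^c)$ and the same $n^{-8}$ tail transfers. The only delicate point is the constant-tracking in the matrix Bennett step: the value of $\mathfrak c$ is engineered so that the exponent $nh(1/2)/L$ exceeds $9\log n$, leaving enough slack to absorb the dimension prefactor $2m\leqslant 2n$ and still produce a polynomial tail of order $n^{-8}$; using a coarser Bernstein-type bound would force a small downward adjustment of $\mathfrak c$.
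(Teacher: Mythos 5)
Your argument is correct: the paper itself gives no proof of this lemma (it is borrowed verbatim from Lemma 5 of Comte and Genon-Catalot \cite{CGC20}), and your matrix-Bennett/Chernoff reconstruction is essentially the standard argument behind that cited result, with the constant $\mathfrak c = 2h(1/2)/9$ correctly identified so that the exponent reaches $9\log n$ and absorbs the dimension prefactor $2m\leqslant 2n$. The deduction $\Lambda_m^c\subset\Omega_m^c$ via $\|\widehat\Psi_m^{-1}\|_{\rm op}\leqslant 2\|\Psi_m^{-1}\|_{\rm op}$ on $\Omega_m$ is also exactly how the two tail bounds are linked there.
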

\noindent
First of all,
\begin{displaymath}
\mathbb E\left[\|\widetilde b_{m}^{\prime,1} - b'\|_{n}^{2}\right] =
\mathbb E\left[\|\widehat b_{m}^{\prime,1} - b'\|_{n}^{2}\mathbf 1_{\Lambda_{m + p}}\right] 
+\mathbb E(\|b'\|_{n}^{2}\mathbf 1_{\Lambda_{m + p}^{c}}).
\end{displaymath}
Obviously, by applying Lemma \ref{Omega}, since $\mathbb E[b'(X_1)^4] <\infty$,
\begin{displaymath}
\mathbb E(\|b'\|_{n}^{2}\mathbf 1_{\Lambda_{m + p}^{c}})
\leqslant\mathbb E[b'(X_1)^4]^{1/2}\mathbb P(\Lambda_{m + p}^{c})^{1/2}
\leqslant
\mathfrak c_{\ref{Omega}}^{1/2}
\mathbb E[b'(X_1)^4]^{1/2}\frac{1}{n^4}.
\end{displaymath}
Let us dissect $\|\widehat b_{m}^{\prime,1} - b'\|_n^2{\mathbf 1}_{\Lambda_{m + p}}$ via the event $\Omega_{m + p}$:
\begin{eqnarray*}
 \mathbb E\left[\|\widehat b_{m}^{\prime,1} - b'\|_{n}^{2}\mathbf 1_{\Lambda_{m + p}}\right] & = &
 \mathbb E\left[\|\widehat b_{m}^{\prime,1} - b'\|_{n}^{2}\mathbf 1_{\Lambda_{m + p}\cap\Omega_{m + p}}\right] +
 \mathbb E\left[\|\widehat b_{m}^{\prime,1} - b'\|_{n}^{2}\mathbf 1_{\Lambda_{m + p}\cap\Omega_{m + p}^{c}}\right]\\
 & \leqslant &
 \mathbb E\left[\|\widehat b_{m}^{\prime,1} - b'\|_{n}^{2}\mathbf 1_{\Lambda_{m + p}\cap\Omega_{m + p}}\right]\\
 & &
 \hspace{1cm} +
 2\left[\mathbb E(\|\widehat b_{m}^{\prime,1}\|_{n}^{4}\mathbf 1_{\Lambda_{m + p}})^{1/2} +\mathbb E(\|b'\|_{n}^{4})^{1/2}\right]\mathbb P(\Omega_{m + p}^{c})^{1/2}
 =: S + T.
\end{eqnarray*}
On the one hand, let us find suitable bounds on the two remaining terms:
\begin{itemize}
\item For every measurable function $\psi :\mathbb R\rightarrow\mathbb R$ and $q\in [1,\infty[$ such that $\mathbb E(\psi(X_1)^{2q}) <\infty$, by Jensen's inequality,
\begin{equation}\label{elaborate_risk_bound_estimator_1_1}
 \mathbb E(\|\psi\|_{n}^{2q}) =
 \mathbb E\left[\left(\frac{1}{n}\sum_{i = 1}^{n}\psi^2(X_i)\right)^q\right]
 \leqslant
 \frac{1}{n}
 \sum_{i = 1}^{n}\mathbb E[(\psi(X_i))^{2q}] =\mathbb E(\psi^{2q}(X_1)).
\end{equation}
Then, $\mathbb E(\|b'\|_{n}^{4})\leqslant\mathbb E(b'(X_1)^4)$.
\item Recall that
\begin{displaymath}
\widehat b_{m}^{\prime,1}(\mathbf X) =
\widehat\Phi_m'(\widehat\Phi_{m}^{*}\widehat\Phi_{m})^{-1}
\widehat\Phi_{m}^{*}\mathbf Y =
\widehat\Phi_{m + p}\Delta_{m,m + p}^{*}(\widehat\Phi_{m}^{*}\widehat\Phi_{m})^{-1}
\widehat\Phi_{m}^{*}\mathbf Y.
\end{displaymath}
First,
\begin{eqnarray}
 \|\widehat\Phi_{m + p}\Delta_{m,m + p}^{*}(\widehat\Phi_{m}^{*}\widehat\Phi_{m})^{-1}
 \widehat\Phi_{m}^{*}\|_{{\rm op}}^2
 & = &
 \lambda_{\max}(\widehat\Phi_{m + p}
 \Delta_{m,m + p}^{*}(\widehat\Phi_{m}^{*}\widehat\Phi_m)^{-1}\Delta_{m,m + p}
 \widehat\Phi_{m + p}^{*})
 \nonumber\\
 & = &
 n^{-1}
 \|\widehat\Phi_{m + p}\Delta_{m,m + p}^{*}\widehat\Psi_{m}^{-1/2}\|_{{\rm op}}^{2}
 \nonumber\\
 & \leqslant &
 n^{-1}\|\widehat\Psi_{m}^{-1/2}\|_{{\rm op}}^{2}\|\widehat\Phi_{m + p}\Delta_{m,m + p}^{*}\|_{{\rm op}}^{2}
 \nonumber\\
 & = &
 \|\widehat\Psi_{m}^{-1}\|_{{\rm op}}\lambda_{\max}(\Delta_{m,m + p}
 \widehat\Psi_{m + p}\Delta_{m,m + p}^{*})
 \nonumber\\
 \label{elaborate_risk_bound_estimator_1_2}
 & \leqslant &
 \|\widehat\Psi_{m}^{-1}\|_{{\rm op}}
 \|\widehat\Psi_{m + p}\|_{{\rm op}}\|\Delta_{m,m + p}\|_{{\rm op}}^{2}.
\end{eqnarray}
Moreover, $\|\widehat\Psi_{m + p}\|_{{\rm op}}\leqslant\mathfrak L(m + p)$ and $\mathfrak L(m)\|\widehat\Psi_{m}^{-1}\|_{{\rm op}}\leqslant\mathfrak L(m + p)\|\widehat\Psi_{m + p}^{-1}\|_{{\rm op}}\leqslant\mathfrak cn/\log(n)$ on $\Lambda_{m + p}$. Then,
\begin{eqnarray*}
 \mathbb E(\|\widehat b_{m}^{\prime,1}\|_{n}^{4}\mathbf 1_{\Lambda_{m + p}})
 & \leqslant &
 \frac{1}{n^2}
 \mathbb E(\|\widehat\Psi_{m}^{-1}\|_{{\rm op}}^{2}
 \|\widehat\Psi_{m + p}\|_{{\rm op}}^{2}\mathbf 1_{\Lambda_{m + p}}
 \|\mathbf Y\|_{2,n}^{4})
 \|\Delta_{m,m + p}\|_{{\rm op}}^{4}\\
 & \leqslant &
 \frac{\mathfrak c^2n^2}{\log(n)^2}\|\Delta_{m,m + p}\|_{{\rm op}}^{4}
 \mathbb E\left[\left(\frac{1}{n}\sum_{i = 1}^{n}Y_{i}^{2}\right)^2\right]
 \leqslant
 \frac{\mathfrak c^2n^2}{\log(n)^2}\|\Delta_{m,m + p}\|_{{\rm op}}^{4}
 \mathbb E(Y_{1}^{4}).
\end{eqnarray*}
\end{itemize}
Thus, thanks to Lemma \ref{Omega},
\begin{eqnarray*}
 T & = &
 2\left[\mathbb E(\|\widehat b_{m}^{\prime,1}\|_{n}^{4}\mathbf 1_{\Lambda_{m + p}})^{1/2} +\mathbb E(\|b'\|_{n}^{4})^{1/2}\right]\mathbb P(\Omega_{m + p}^{c})^{1/2}\\
 & \leqslant &
 2\left[\frac{\mathfrak cn}{\log(n)}\|\Delta_{m,m+p}\|_{{\rm op}}^{2}
 \mathbb E(Y_{1}^{4})^{1/2} +\mathbb E(b'(X_1)^4)^{1/2}\right]\frac{\mathfrak c_{\ref{Omega}}^{1/2}}{n^4}.
\end{eqnarray*}
On the other hand, with the exact same ideas as in the proof of Proposition \ref{rough_risk_bound_estimator_1},
\begin{eqnarray*}
 & &
 S - 3\|f\|_{\infty}\inf_{t\in\mathcal S_m}\|t - b'\|^2 - 3\|b_m' - (b')_m\|_{f}^{2}\\
 & &
 \hspace{3cm}
 \leqslant
 3\mathbb E\left[\|\widehat\Phi_{m + p}\Delta_{m,m + p}^{*}(\widehat\Phi_{m}^{*}\widehat\Phi_m)^{-1}
 \widehat\Phi_{m}^{*}\|_{{\rm op}}^{2}\|b - b_m\|_{n}^{2}\mathbf 1_{\Lambda_{m + p}\cap\Omega_{m + p}}\right]\\
 & &
 \hspace{3cm}
 +\frac{\sigma^2}{n}
 \mathbb E\left[{\rm Tr}\left((\widehat\Phi_{m}^{*}\widehat\Phi_{m})^{-1}
 \Delta_{m,m + p}\widehat\Phi_{m + p}^{*}\widehat\Phi_{m + p}\Delta_{m,m + p}^{*}\right)
 \mathbf 1_{\Lambda_{m + p}\cap\Omega_{m + p}}\right]
 =: S_1 + S_2.
\end{eqnarray*}
Let us find suitable bounds on $S_1$ and $S_2$:
\begin{itemize}
 \item On $\Omega_{m + p}$, the eigenvalues of $\Psi_{m + p}^{-1/2}\widehat\Psi_{m + p}\Psi_{m + p}^{-1/2}$ belong to $[1/2,3/2]$. The same way, on $\Omega_m$, the eigenvalues of $\Psi_{m}^{-1/2}\widehat\Psi_m\Psi_m^{-1/2}$ belong to $[1/2,3/2]$ and then, those of the matrix $\Psi_{m}^{1/2}\widehat\Psi_{m}^{-1}\Psi_{m}^{1/2}$ belong to $[2/3,2]$. So, on $\Omega_{m + p}$, $\widehat S_1 :=\|\widehat\Phi_{m + p}\Delta_{m,m + p}^{*}(\widehat\Phi_{m}^{*}\widehat\Phi_m)^{-1}\widehat\Phi_{m}^{*}\|_{{\rm op}}^{2}$ satisfies
 \begin{eqnarray}
  \widehat S_1
  & = &
  \lambda_{\max}
  (\widehat\Phi_{m + p}\Delta_{m,m + p}^{*}(\widehat\Phi_{m}^{*}
  \widehat\Phi_{m})^{-1}\Delta_{m,m + p}\widehat\Phi_{m + p}^{*})
  \nonumber\\
  & = &
  \lambda_{\max}
  (\Delta_{m,m + p}^{*}
  \widehat\Psi_{m}^{-1}\Delta_{m,m + p}\widehat\Psi_{m + p})
  \nonumber\\
  & = &
  \lambda_{\max}
  (\Psi_{m + p}^{-1/2}\Delta_{m,m + p}^{f,1}\Psi_{m}^{1/2}\widehat\Psi_{m}^{-1}\Psi_{m}^{1/2}(\Delta_{m,m + p}^{f,1})^*\Psi_{m + p}^{-1/2}\widehat\Psi_{m + p})
  \nonumber\\
  & = &
  \lambda_{\max}
  ((\Psi_{m + p}^{-1/2}\widehat\Psi_{m + p}\Psi_{m + p}^{-1/2})^{1/2}\Delta_{m,m + p}^{f,1}
  \Psi_{m}^{1/2}\widehat\Psi_{m}^{-1}\Psi_{m}^{1/2}(\Delta_{m,m + p}^{f,1})^*
  (\Psi_{m + p}^{-1/2}\widehat\Psi_{m + p}\Psi_{m + p}^{-1/2})^{1/2})
  \nonumber\\
  & = &
  \|(\Psi_{m + p}^{-1/2}\widehat\Psi_{m + p}\Psi_{m + p}^{-1/2})^{1/2}\Delta_{m,m + p}^{f,1}
  \Psi_{m}^{1/2}\widehat\Psi_{m}^{-1}\Psi_{m}^{1/2}(\Delta_{m,m + p}^{f,1})^*
  (\Psi_{m + p}^{-1/2}\widehat\Psi_{m + p}\Psi_{m + p}^{-1/2})^{1/2}\|_{{\rm op}}
  \nonumber\\
  & \leqslant &
  \|\Psi_{m + p}^{-1/2}\widehat\Psi_{m + p}\Psi_{m + p}^{-1/2}\|_{{\rm op}}
  \|\Delta_{m,m + p}^{f,1}\|_{{\rm op}}^{2}
  \|\Psi_{m}^{1/2}\widehat\Psi_{m}^{-1}\Psi_{m}^{1/2}\|_{{\rm op}}
  \nonumber\\
  \label{elaborate_risk_bound_estimator_1_3}
  & \leqslant &
  3\|\Delta_{m,m + p}^{f,1}\|_{{\rm op}}^{2}.
 \end{eqnarray}
 Thus,
 \begin{displaymath}
 S_1\leqslant
 9\|\Delta_{m,m + p}^{f,1}\|_{{\rm op}}^{2}\mathbb E(\|b - b_m\|_{n}^{2}) =
 9\|\Delta_{m,m + p}^{f,1}\|_{{\rm op}}^{2}\|b - b_m\|_{f}^{2}.
 \end{displaymath}
 \item As previously, since the eigenvalues of $\Psi_{m}^{1/2}\widehat\Psi_{m}^{-1}\Psi_{m}^{1/2}\mathbf 1_{\Omega_{m + p}}$ belong to $[2/3,2]$,
 \begin{eqnarray*}
  S_2 & = &
  \frac{\sigma^2}{n}
  \mathbb E\left[{\rm Tr}\left(\Psi_{m}^{1/2}\widehat\Psi_{m}^{-1}\Psi_{m}^{1/2}(\Delta_{m,m + p}^{f,1})^{*}
  \Psi_{m + p}^{-1/2}\widehat\Psi_{m + p}\Psi_{m + p}^{-1/2}\Delta_{m,m + p}^{f,1}\right)
  \mathbf 1_{\Lambda_{m + p}\cap\Omega_{m + p}}\right]\\
  & \leqslant &
  \frac{\sigma^2}{n}
  \mathbb E\left[
  \|\Psi_{m}^{1/2}\widehat\Psi_{m}^{-1}\Psi_{m}^{1/2}\|_{{\rm op}}
  {\rm Tr}\left((\Delta_{m,m + p}^{f,1})^{*}
  \Psi_{m + p}^{-1/2}\widehat\Psi_{m + p}\Psi_{m + p}^{-1/2}\Delta_{m,m + p}^{f,1}\right)\mathbf 1_{\Omega_{m + p}}\right]\\
  & \leqslant &
  \frac{2\sigma^2}{n}
  {\rm Tr}\left[(\Delta_{m,m + p}^{f,1})^{*}
  \Psi_{m + p}^{-1/2}\mathbb E(\widehat\Psi_{m + p})\Psi_{m + p}^{-1/2}\Delta_{m,m + p}^{f,1}\right] =
  \frac{2\sigma^2}{n}\|\Delta_{m,m + p}^{f,1}\|_{F}^{2}.
 \end{eqnarray*}
\end{itemize}
The result follows by gathering all the terms. $\Box$
%


%
\subsection{Proof of Proposition \ref{elaborate_risk_bound_estimator_2}}
First of all,
\begin{displaymath}
\mathbb E\left[\|\widetilde b_{m}^{\prime,2} - b'\|_{n}^{2}\right] =
\mathbb E\left[\|\widehat b_{m}^{\prime,2} - b'\|_{n}^{2}\mathbf 1_{\Lambda_{m + p}}\right] 
+\mathbb E(\|b'\|_{n}^{2}\mathbf 1_{\Lambda_{m + p}^{c}}).
\end{displaymath}
Obviously, by applying Lemma \ref{Omega}, since $\mathbb E[b'(X_1)^4] <\infty$,
\begin{displaymath}
\mathbb E(\|b'\|_{n}^{2}\mathbf 1_{\Lambda_{m + p}^{c}})
\leqslant\mathbb E[b'(X_1)^4]^{1/2}\mathbb P(\Lambda_{m + p}^{c})^{1/2}
\leqslant
\mathfrak c_{\ref{Omega}}^{1/2}
\mathbb E[b'(X_1)^4]^{1/2}\frac{1}{n^4}.
\end{displaymath}
Let us dissect $\|\widehat b_{m}^{\prime,2} - b'\|_n^2{\mathbf 1}_{\Lambda_{m + p}}$ via the event $\Omega_{m + p}$:
\begin{eqnarray*}
 \mathbb E\left[\|\widehat b_{m}^{\prime,2} - b'\|_{n}^{2}\mathbf 1_{\Lambda_{m + p}}\right] & = &
 \mathbb E\left[\|\widehat b_{m}^{\prime,2} - b'\|_{n}^{2}\mathbf 1_{\Lambda_{m + p}\cap\Omega_{m + p}}\right] +
 \mathbb E\left[\|\widehat b_{m}^{\prime,2} - b'\|_{n}^{2}\mathbf 1_{\Lambda_{m + p}\cap\Omega_{m + p}^{c}}\right]\\
 & \leqslant &
 \mathbb E\left[\|\widehat b_{m}^{\prime,2} - b'\|_{n}^{2}\mathbf 1_{\Lambda_{m + p}\cap\Omega_{m + p}}\right]\\
 & &
 \hspace{1cm} +
 2\left[\mathbb E(\|\widehat b_{m}^{\prime,2}\|_{n}^{4}\mathbf 1_{\Lambda_{m + p}})^{1/2} +\mathbb E(\|b'\|_{n}^{4})^{1/2}\right]\mathbb P(\Omega_{m + p}^{c})^{1/2}
 =: S + T.
\end{eqnarray*}
On the one hand, let us find suitable bounds on the two remaining terms:
\begin{itemize}
\item As in the proof of Proposition \ref{elaborate_risk_bound_estimator_1}, thanks to Inequality (\ref{elaborate_risk_bound_estimator_1_1}), $\mathbb E(\|b'\|_{n}^{4})\leqslant\mathbb E(b'(X_1)^4)$.
\item Recall that
\begin{displaymath}
\widehat b_{m}^{\prime,2}(\mathbf X) =
\widehat\Phi_m\Delta_{m,m + p}(\widehat\Phi_{m + p}^{*}\widehat\Phi_{m + p})^{-1}
\widehat\Phi_{m + p}^{*}\mathbf Y.
\end{displaymath}
First,
\begin{eqnarray*}
 \|\widehat\Phi_m\Delta_{m,m + p}(\widehat\Phi_{m + p}^{*}\widehat\Phi_{m + p})^{-1}
 \widehat\Phi_{m + p}^{*}\|_{{\rm op}}^2
 & = &
 \lambda_{\max}(\widehat\Phi_m
 \Delta_{m,m + p}(\widehat\Phi_{m + p}^{*}\widehat\Phi_{m + p})^{-1}\Delta_{m,m + p}^{*}
 \widehat\Phi_{m}^{*})\\
 & = &
 n^{-1}
 \|\widehat\Phi_m\Delta_{m,m + p}\widehat\Psi_{m + p}^{-1/2}\|_{{\rm op}}^{2}\\
 & \leqslant &
 n^{-1}\|\widehat\Psi_{m + p}^{-1/2}\|_{{\rm op}}^{2}\|\widehat\Phi_m\Delta_{m,m + p}\|_{{\rm op}}^{2}\\
 & = &
 \|\widehat\Psi_{m + p}^{-1}\|_{{\rm op}}\lambda_{\max}(\Delta_{m,m + p}^{*}
 \widehat\Psi_m\Delta_{m,m + p})\\
 & \leqslant &
 \|\widehat\Psi_{m + p}^{-1}\|_{{\rm op}}
 \|\widehat\Psi_m\|_{{\rm op}}\|\Delta_{m,m + p}\|_{{\rm op}}^{2}.
\end{eqnarray*}
Moreover, $\|\widehat\Psi_m\|_{{\rm op}}\leqslant\mathfrak L(m)\leqslant\mathfrak L(m + p)$ and $\mathfrak L(m + p)\|\widehat\Psi_{m + p}^{-1}\|_{{\rm op}}\leqslant\mathfrak cn/\log(n)$ on $\Lambda_{m + p}$. Then,
\begin{eqnarray*}
 \mathbb E(\|\widehat b_{m}^{\prime,2}\|_{n}^{4}\mathbf 1_{\Lambda_{m + p}})
 & \leqslant &
 \frac{1}{n^2}
 \mathbb E(\|\widehat\Psi_{m + p}^{-1}\|_{{\rm op}}^{2}
 \|\widehat\Psi_m\|_{{\rm op}}^{2}\mathbf 1_{\Lambda_{m + p}}
 \|\mathbf Y\|_{2,n}^{4})
 \|\Delta_{m,m + p}\|_{{\rm op}}^{4}\\
 & \leqslant &
 \frac{\mathfrak c^2n^2}{\log(n)^2}\|\Delta_{m,m + p}\|_{{\rm op}}^{4}
 \mathbb E\left[\left(\frac{1}{n}\sum_{i = 1}^{n}Y_{i}^{2}\right)^2\right]
 \leqslant
 \frac{\mathfrak c^2n^2}{\log(n)^2}\|\Delta_{m,m + p}\|_{{\rm op}}^{4}
 \mathbb E(Y_{1}^{4}).
\end{eqnarray*}
\end{itemize}
Thus, thanks to Lemma \ref{Omega},
\begin{eqnarray*}
 T & = &
 2\left[\mathbb E(\|\widehat b_{m}^{\prime,2}\|_{n}^{4}\mathbf 1_{\Lambda_{m + p}})^{1/2} +\mathbb E(\|b'\|_{n}^{4})^{1/2}\right]\mathbb P(\Omega_{m + p}^{c})^{1/2}\\
 & \leqslant &
 2\left[\frac{\mathfrak cn}{\log(n)}\|\Delta_{m,m + p}\|_{{\rm op}}^{2}
 \mathbb E(Y_{1}^{4})^{1/2} +\mathbb E(b'(X_1)^4)^{1/2}\right]\frac{\mathfrak c_{\ref{Omega}}^{1/2}}{n^4}.
\end{eqnarray*}
On the other hand, with the exact same ideas than in the proof of Proposition \ref{rough_risk_bound_estimator_2},
\begin{eqnarray*}
 S - 2\|f\|_{\infty}\inf_{t\in\mathcal S_m}\|t - b'\|^2 & \leqslant &
 2\mathbb E\left[\|\widehat\Phi_m\Delta_{m,m + p}(\widehat\Phi_{m + p}^{*}\widehat\Phi_{m + p})^{-1}
 \widehat\Phi_{m + p}^{*}\|_{{\rm op}}^{2}\|b - b_{m + p}\|_{n}^{2}\mathbf 1_{\Lambda_{m + p}\cap\Omega_{m + p}}\right]\\
 & &
 +\frac{\sigma^2}{n}
 \mathbb E\left[{\rm Tr}\left((\widehat\Phi_{m + p}^{*}\widehat\Phi_{m + p})^{-1}
 \Delta_{m,m + p}^{*}\widehat\Phi_{m}^{*}\widehat\Phi_m\Delta_{m,m + p}\right)
 \mathbf 1_{\Lambda_{m + p}\cap\Omega_{m + p}}\right]\\
 & =: &
 S_1 + S_2.
\end{eqnarray*}
Let us find suitable bounds on $S_1$ and $S_2$:
\begin{itemize}
 \item On $\Omega_{m + p}$, the eigenvalues of $\Psi_{m + p}^{-1/2}\widehat\Psi_{m + p}\Psi_{m + p}^{-1/2}$ belong to $[1/2,3/2]$ and then, those of the matrix $\Psi_{m + p}^{1/2}\widehat\Psi_{m + p}^{-1}\Psi_{m + p}^{1/2}$ belong to $[2/3,2]$. The same way, on $\Omega_m$, the eigenvalues of $\Psi_{m}^{-1/2}\widehat\Psi_m\Psi_m^{-1/2}$ belong to $[1/2,3/2]$. So, on $\Omega_{m + p}$, $\widehat S_1 :=\|\widehat\Phi_m\Delta_{m,m + p}(\widehat\Phi_{m + p}^{*}\widehat\Phi_{m + p})^{-1}\widehat\Phi_{m + p}^{*}\|_{{\rm op}}^{2}$ satisfies
 \begin{eqnarray*}
  \widehat S_1
  & = &
  \lambda_{\max}
  (\widehat\Phi_m\Delta_{m,m + p}(\widehat\Phi_{m + p}^{*}
  \widehat\Phi_{m + p})^{-1}\Delta_{m,m + p}^{*}\widehat\Phi_{m}^{*})\\
  & = &
  \lambda_{\max}
  (\Delta_{m,m + p}
  \widehat\Psi_{m + p}^{-1}\Delta_{m,m + p}^{*}\widehat\Psi_m)\\
  & = &
  \lambda_{\max}
  (\Psi_{m}^{-1/2}(\Delta_{m,m + p}^{f,2})^*\Psi_{m + p}^{1/2}\widehat\Psi_{m + p}^{-1}\Psi_{m + p}^{1/2}\Delta_{m,m + p}^{f,2}\Psi_{m}^{-1/2}\widehat\Psi_m)\\
  & = &
  \lambda_{\max}
  ((\Psi_{m}^{-1/2}\widehat\Psi_m\Psi_{m}^{-1/2})^{1/2}(\Delta_{m,m + p}^{f,2})^*
  \Psi_{m + p}^{1/2}\widehat\Psi_{m + p}^{-1}\Psi_{m + p}^{1/2}\Delta_{m,m + p}^{f,2}
  (\Psi_{m}^{-1/2}\widehat\Psi_m\Psi_{m}^{-1/2})^{1/2})\\
  & = &
  \|(\Psi_{m}^{-1/2}\widehat\Psi_m\Psi_{m}^{-1/2})^{1/2}(\Delta_{m,m + p}^{f,2})^*
  \Psi_{m + p}^{1/2}\widehat\Psi_{m + p}^{-1}\Psi_{m + p}^{1/2}\Delta_{m,m + p}^{f,2}
  (\Psi_{m}^{-1/2}\widehat\Psi_m\Psi_{m}^{-1/2})^{1/2}\|_{{\rm op}}\\
  & \leqslant &
  \|\Psi_{m}^{-1/2}\widehat\Psi_m\Psi_{m}^{-1/2}\|_{{\rm op}}
  \|\Delta_{m,m + p}^{f,2}\|_{{\rm op}}^{2}
  \|\Psi_{m + p}^{1/2}\widehat\Psi_{m + p}^{-1}\Psi_{m + p}^{1/2}\|_{{\rm op}}\\
  & \leqslant &
  3\|\Delta_{m,m + p}^{f,2}\|_{{\rm op}}^{2}.
 \end{eqnarray*}
 Thus,
 \begin{displaymath}
 S_1\leqslant
 6\|\Delta_{m,m + p}^{f,2}\|_{{\rm op}}^{2}\mathbb E(\|b - b_{m + p}\|_{n}^{2}) =
 6\|\Delta_{m,m + p}^{f,2}\|_{{\rm op}}^{2}\|b - b_{m + p}\|_{f}^{2}.
 \end{displaymath}
 \item As previously, since the eigenvalues of $\Psi_{m + p}^{1/2}\widehat\Psi_{m + p}^{-1}\Psi_{m + p}^{1/2}\mathbf 1_{\Omega_{m + p}}$ belong to $[2/3,2]$,
 \begin{eqnarray*}
  S_2 & = &
  \frac{\sigma^2}{n}
  \mathbb E\left[{\rm Tr}\left(\Psi_{m + p}^{1/2}\widehat\Psi_{m + p}^{-1}\Psi_{m + p}^{1/2}\Delta_{m,m + p}^{f,2}
  \Psi_{m}^{-1/2}\widehat\Psi_m\Psi_{m}^{-1/2}(\Delta_{m,m + p}^{f,2})^*\right)
  \mathbf 1_{\Lambda_{m + p}\cap\Omega_{m + p}}\right]\\
  & \leqslant &
  \frac{\sigma^2}{n}
  \mathbb E\left[
  \|\Psi_{m + p}^{1/2}\widehat\Psi_{m + p}^{-1}\Psi_{m + p}^{1/2}\|_{{\rm op}}
  {\rm Tr}\left(\Delta_{m,m + p}^{f,2}
  \Psi_{m}^{-1/2}\widehat\Psi_m\Psi_{m}^{-1/2}(\Delta_{m,m + p}^{f,2})^*\right)\mathbf 1_{\Omega_{m + p}}\right]\\
  & \leqslant &
  \frac{2\sigma^2}{n}
  {\rm Tr}\left[\Delta_{m,m + p}^{f,2}\Psi_{m}^{-1/2}\mathbb E(\widehat\Psi_m)\Psi_{m}^{-1/2}
  (\Delta_{m,m + p}^{f,2})^{*}\right] =
  \frac{2\sigma^2}{n}\|\Delta_{m,m + p}^{f,2}\|_{F}^{2}.
 \end{eqnarray*}
\end{itemize}
The result follows by gathering all the terms. $\Box$
%


%
\subsection{Proof of Proposition \ref{bounds_f_norm}}\label{proof_bounds_f_norm}
The proof of Proposition \ref{bounds_f_norm} relies on the following general lemma.
%


%
\begin{lemma}\label{bounds_f_norm_general}
Consider $\varphi\in\mathbb L^2(I,dx)$ and let $\widehat\varphi$ be a measurable map from $\Omega\times I$ into $\mathcal S_m$ such that $\mathbb E(\|\widehat\varphi\|_{f}^{4})^{1/2}\leqslant\mathfrak mn^3$ with $\mathfrak m > 0$ not depending on $m$ and $n$. Under Assumptions \ref{assumption_f} and \ref{assumption_Psi}($m$),
\begin{displaymath}
\mathbb E(\|\widehat\varphi -\varphi\|_{f}^{2})
\leqslant 5\|f\|_{\infty}\inf_{t\in\mathcal S_m}\|t -\varphi\|^2 +
4\mathbb E(\|\widehat\varphi -\varphi\|_{n}^{2}) +
\frac{\mathfrak c_{\ref{bounds_f_norm}}(\mathfrak m,\varphi)}{n}
\end{displaymath}
with
\begin{displaymath}
\mathfrak c_{\ref{bounds_f_norm}}(\mathfrak m,\varphi) =
\sqrt{8}\mathfrak c_{\ref{Omega}}(\|\varphi\|_{f}^{2} +\mathfrak m).
\end{displaymath}
\end{lemma}
\noindent
The proof of Lemma \ref{bounds_f_norm_general} is postponed to the end of Subsection \ref{proof_bounds_f_norm}. Proposition \ref{bounds_f_norm} is obtained by applying Lemma \ref{bounds_f_norm_general} to $\varphi = b'$ and $\widehat\varphi =\widetilde b_{m}^{\prime,1}$ first, and then to $\varphi = b'$ and $\widehat\varphi =\widetilde b_{m}^{\prime,2}$. First,
\begin{eqnarray*}
 \|\widehat b_{m}^{\prime,1}\|_{f}^{2} & = &
 \int_I\left(\sum_{j = 1}^{m}[\widehat\theta_{m}^{1}]_j\varphi_j'(x)\right)^2f(x)dx
  = 
 (\widehat\theta_{m}^{1})^*\Psi_m'\widehat\theta_{m}^{1}
 \leqslant\|\Psi_m'\|_{\rm op}\|\widehat\theta_{m}^{1}\|_{2,m}^{2}
\end{eqnarray*}
with
\begin{displaymath}
\Psi_m' :=
(\langle\varphi_j',\varphi_k'\rangle_f)_{j,k} =
\frac{1}{n}\mathbb E[(\widehat{\Phi}_m')^*\widehat{\Phi}_m'] =
\Delta_{m,m + p}\Psi_{m + p}\Delta_{m,m + p}.
\end{displaymath}
Then,
\begin{displaymath}
\|\Psi_m'\|_{\rm op}\leqslant
\|\Delta_{m,m + p}\|_{\rm op}^{2}\|\Psi_{m + p}\|_{\rm op}\leqslant
\|\Delta_{m,m + p}\|_{\rm op}^{2}\mathfrak L(m + p).
\end{displaymath}
Moreover, as established in the proof of Comte and Genon-Catalot \cite{CGC20}, Proposition 5,
\begin{displaymath}
\|\widehat\theta_{m}^{1}\|_{2,m}^{2}
\leqslant\frac{1}{n}\|\widehat{\Psi}_{m}^{-1}\|_{\rm op}\|\mathbf Y\|_{2,n}^{2}
\leqslant\frac{1}{n}\|\widehat{\Psi}_{m + p}^{-1}\|_{\rm op}\|\mathbf Y\|_{2,n}^{2}
\end{displaymath}
and then, on $\Lambda_{m + p}$,
\begin{displaymath}
\|\widehat b_{m}^{\prime,1}\|_{f}^{4}
\leqslant
\|\Delta_{m,m + p}\|_{\rm op}^{4}
\frac{\mathfrak c^2}{\log(n)^2}\left(\sum_{i = 1}^{n}Y_{i}^{2}\right)^2.
\end{displaymath}
Since the $Y_i$'s are independent and $\|\Delta_{m,m + p}\|_{\rm op}^{2}\leqslant\mathfrak m_{\Delta}n^2$,
\begin{eqnarray*}
 \mathbb E(\|\widetilde b_{m}^{\prime,1}\|_{f}^{4})^{1/2}
 & \leqslant &
 \left(\|\Delta_{m,m + p}\|_{\rm op}^{4}
 \frac{\mathfrak c^2n}{\log(n)^2}\mathbb E(Y_{1}^{4})\right)^{1/2}\\
 & \leqslant &
 \mathfrak mn^3
 \quad {\rm with}\quad
 \mathfrak m =\mathfrak c\mathfrak m_{\Delta}\mathbb E(Y_{1}^{4})^{1/2}.
\end{eqnarray*}
Therefore, by Lemma \ref{bounds_f_norm_general},
\begin{displaymath}
\mathbb E(\|\widetilde b_{m}^{\prime,1} - b'\|_{f}^{2})
\leqslant 5\|f\|_{\infty}\inf_{t\in\mathcal S_m}\|t - b'\|^2 +
4\mathbb E(\|\widetilde b_{m}^{\prime,1} - b'\|_{n}^{2}) +
\frac{\mathfrak c_{\ref{bounds_f_norm}}(\mathfrak m,b')}{n}.
\end{displaymath}
The risk bound in norm $\|.\|_f$ on $\widetilde b_{m}^{\prime,2}$ is obtained via similar arguments. $\qed$
%


%
\subsubsection*{Proof of Lemma \ref{bounds_f_norm_general}}
First of all, note that
\begin{eqnarray*}
 \mathbb E(\|\widehat\varphi -\varphi\|_{f}^{2}) & = &
 \mathbb E(\|\widehat\varphi -\varphi\|_{f}^{2}\mathbf 1_{\Omega_n}) +
 \mathbb E(\|\widehat\varphi -\varphi\|_{f}^{2}\mathbf 1_{\Omega_{n}^{c}})\\
 & =: & T_1 + T_2.
\end{eqnarray*}
For any $t\in\mathbb L^2(I,f(x)dx)$, let $t^{(f)}$ be the orthogonal projection of $t$ on $\mathcal S_m$ for the theoretical norm $\|.\|_f$. On the one hand, since $\|t\|_{f}^{2}\mathbf 1_{\Omega_n}\leqslant 2\|t\|_{n}^{2}\mathbf 1_{\Omega_n}$ for every $t\in\mathcal S_m$,
\begin{eqnarray*}
 \|\widehat\varphi -\varphi\|_{f}^{2}\mathbf 1_{\Omega_n} & = &
 (\|\widehat\varphi -\varphi^{(f)}\|_{f}^{2} +
 \|\varphi^{(f)} -\varphi\|_{f}^{2})\mathbf 1_{\Omega_n}\\
 & \leqslant &
 \|\varphi^{(f)} -\varphi\|_{f}^{2} +
 2\|\widehat\varphi -\varphi^{(f)}\|_{n}^{2}\mathbf 1_{\Omega_n}\\
 & \leqslant &
 \inf_{t\in\mathcal S_m}\|t -\varphi\|_{f}^{2} +
 4\|\widehat\varphi -\varphi\|_{n}^{2} + 4\|\varphi -\varphi^{(f)}\|_{n}^{2}.
\end{eqnarray*}
Since $\mathbb E(\|\varphi -\varphi^{(f)}\|_{n}^{2}) = \|\varphi -\varphi^{(f)}\|_{f}^{2}$,
\begin{eqnarray*}
 T_1
 & \leqslant &
 5\inf_{t\in\mathcal S_m}\|t -\varphi\|_{f}^{2} +
 4\mathbb E(\|\widehat\varphi -\varphi\|_{n}^{2})\\
 & \leqslant &
 5\|f\|_{\infty}\inf_{t\in\mathcal S_m}\|t -\varphi\|^2 +
 4\mathbb E(\|\widehat\varphi -\varphi\|_{n}^{2}). 
\end{eqnarray*}
On the other hand, since $\mathbb P(\Omega_{m}^{c})\leqslant\mathfrak c_{\ref{Omega}}/n^8$ by Lemma \ref{Omega},
\begin{displaymath}
T_2\leqslant
\mathbb E(\|\widehat\varphi -\varphi\|_{f}^{4})^{1/2}\mathbb P(\Omega_{m}^{c})^{1/2}
\leqslant
\sqrt{8}[\|\varphi\|_{f}^{2} +
\mathbb E(\|\widehat\varphi\|_{f}^{4})^{1/2}]\frac{\mathfrak c_{\ref{Omega}}}{n^4}.
\end{displaymath}
Finally, the condition $\mathbb E(\|\widehat\varphi\|_{f}^{4})^{1/2}\leqslant\mathfrak mn^3$ implies that
\begin{displaymath}
\mathbb E(\|\widehat\varphi -\varphi\|_{f}^{2})
\leqslant
5\|f\|_{\infty}\inf_{t\in\mathcal S_m}\|t -\varphi\|^2 +
4\mathbb E(\|\widehat\varphi -\varphi\|_{n}^{2}) +
\frac{\sqrt{8}\mathfrak c_{\ref{Omega}}(\|\varphi\|_{f}^{2} +\mathfrak m)}{n}.
\quad\qed
\end{displaymath}
%


%
\subsection{Proof of Proposition \ref{additional_term_risk_bound_estimator_1}:}
\noindent
{\bf The Hermite case.} Consider a square integrable function $b$, and
\begin{displaymath}
b_m =\sum_{j = 0}^{m - 1}
\langle b,h_j\rangle h_j
\end{displaymath}
its projection on $\mathcal S_m = {\rm span}\{h_0,\dots,h_{m - 1}\}$. On the one hand,
\begin{displaymath}
b_m' =\sum_{j = 0}^{m - 1}
\langle b, h_j\rangle h_j'.
\end{displaymath}
Then, thanks to Equality (\ref{recursive_derivative_Hermite}),
\begin{eqnarray*}
 b_m' & = &
 \frac{1}{\sqrt 2}
 \sum_{j = 0}^{m - 1}
 \langle b,h_j\rangle(\sqrt j
 h_{j - 1} -\sqrt{j + 1}h_{j + 1})\\
 & = &
 \frac{1}{\sqrt 2}\left(
 \sum_{j = 0}^{m - 2}
 \langle b,h_{j + 1}\rangle	
 \sqrt{j + 1}h_j -\sum_{j = 1}^{m}\langle b, h_{j - 1}\rangle\sqrt jh_j\right)\\
 & = &
 \frac{1}{\sqrt 2}
 \sum_{j = 0}^{m - 2}
 \left[\sqrt{j + 1}\langle b,h_{j + 1}\rangle	
 -\sqrt j\langle b,h_{j - 1}\rangle\right]h_j
 -\left(\sqrt{\frac{m - 1}{2}}
 \langle b,h_{m - 2}\rangle h_{m - 1} +\sqrt{\frac{m}{2}}
 \langle b,h_{m - 1}\rangle h_{m}\right).
\end{eqnarray*}
On the other hand, if $b'$ is square integrable, then
\begin{displaymath}
b' =\sum_{j\geqslant 0}\langle b',h_j\rangle h_j.
\end{displaymath}
The usual integration by parts gives $\langle b',h_j\rangle = -\langle b,h_j'\rangle$ as soon as $\lim_{x\rightarrow\pm\infty}b(x)h_j(x) = 0$ (this holds because the $h_j$'s have exponential decrease and $b$ is square-integrable, thus bounded near infinity). So, the projection of $b'$ is
\begin{eqnarray*}
 (b')_m & = &
 -\sum_{j = 0}^{m - 1}\langle b,h_j'\rangle h_j\\
 & = &
 -\frac{1}{\sqrt 2}
 \sum_{j = 0}^{m - 1}
 \langle b,\sqrt jh_{j - 1} -\sqrt{j + 1}h_{j + 1}\rangle h_j\\
 & = &
 \frac{1}{\sqrt 2}
 \sum_{j = 0}^{m - 1}
 \left[\sqrt{j + 1}\langle b,h_{j + 1}\rangle -\sqrt j\langle b,h_{j - 1}\rangle\right]h_j.
\end{eqnarray*} 
All the components of $b_m'$ and $(b')_m$ are the same on $\mathcal S_{m - 2}$. So,
\begin{displaymath}
b_m' - (b')_m =
-\sqrt{\frac{m}{2}}
\langle b,h_{m - 1}\rangle h_m -\sqrt{\frac{m}{2}}\langle b,h_m\rangle h_{m - 1},
\end{displaymath}
and then,
\begin{displaymath}
\|b_m' - (b')_m\|^2 =
\frac{m}{2}\left(
\langle b,h_{m - 1}\rangle^2 +\langle b,h_m\rangle^2\right).
\end{displaymath}
If $b$ belongs to a Hermite-Sobolev space with regularity index $\alpha > 1$, then the term $\|b_m' - (b')_m\|^2$ is of order $m^{-(\alpha - 1)}$, which is also the order of $\inf_{t\in\mathcal S_m}\|t - b'\|^2$.
\\
\\
{\bf The Laguerre case.} As previously, on the one hand,
%
$b_m =
\sum_{j = 0}^{m - 1}
\langle b,\ell_j\rangle \ell_j,$
%
and thanks to (\ref{recursive_derivative_Laguerre}),
\begin{eqnarray*}
 b_m' & = &
 \sum_{j = 0}^{m - 1}
 \langle b,\ell_j\rangle
 \left(-\ell_j - 2\sum_{k = 0}^{j - 1}\ell_k\right)\\
 & = &
 -\sum_{j = 0}^{m - 1}
 \langle b,\ell_j\rangle\ell_j - 2\sum_{k = 0}^{m - 2}\left(\sum_{j = k + 1}^{m - 1}\langle b,\ell_j\rangle\right)\ell_k.
\end{eqnarray*}
On the other hand, if $b'$ is square integrable, then
%
$b' =\sum_{j\geqslant 0}\langle b',\ell_j\rangle\ell_j.$
%
Thus, since $\langle b',\ell_j\rangle = -\langle b,\ell'_j\rangle$ by Assumption \ref{assumption_bounds_I} (true when $b(0) = 0$),
\begin{displaymath}
(b')_m =\sum_{j = 0}^{m - 1}\left(
\langle b,\ell_j\rangle\ell_j - 2\sum_{k = 0}^{j - 1}\langle b,\ell_k\rangle\right)\ell_j.
\end{displaymath}
Consequently,
\begin{displaymath}
(b')_m - b_m' = 2\sum_{j = 0}^{m - 1}
\left(\sum_{k = 0}^{m - 1}\langle b,\ell_k\rangle\right)\ell_j =
2\left(\sum_{k = 0}^{m - 1}\langle b,\ell_k\rangle\right)\sum_{j = 0}^{m - 1}\ell_j,
\end{displaymath}
and then
\begin{displaymath}
\|(b')_m - b_m'\|^2 =
4m\left(\sum_{k = 0}^{m - 1}\langle b,\ell_k\rangle\right)^2.
\end{displaymath}
Moreover, by assuming that $b(0) = 0$,
\begin{displaymath}
\sum_{k\geqslant 0}
\langle b,\ell_k\rangle\ell_k(0) =
\sqrt 2\sum_{k\geqslant 0}\langle b,\ell_k\rangle = 0.
\end{displaymath}
So,
%
 $\sum_{k = 0}^{m - 1}\langle b,\ell_k\rangle =-\sum_{k\geqslant m}\langle b,\ell_k\rangle,$
%
and then
\begin{displaymath}
\|(b')_m - b_m'\|^2 =
4m\left(\sum_{k\geqslant m}
\langle b,\ell_k\rangle\right)^2.
\end{displaymath}
Finally, if $b$ belongs to a Laguerre-Sobolev space with index $\alpha > 1$, then the right-hand side in the previous equality is smaller than
\begin{displaymath}
\sum_{k\geqslant m}k^{\alpha}\langle b,\ell_k\rangle^2 = O(m^{-\alpha + 1})
\textrm{ $\Box$}
\end{displaymath}
%


%
\subsection{Proof of Lemma \ref{increase_penalty}} 
First,
\begin{displaymath}
\widehat V(m) =
\sigma^2\frac{m}{n}\|\widehat\Psi_{m}^{-1}
(\widehat\Phi_m')^*\widehat\Phi_m'\|_{\rm op}
=\sigma^2\frac{m}{n}\|\widehat\Psi_m^{-1/2}
(\widehat\Phi_m')^*\widehat\Phi_m'\widehat\Psi_m^{-1/2}\|_{\rm op}
\end{displaymath}
where $\Psi_{m}^{-1/2}$ is a symmetric square root of $\Psi_{m}^{-1}$. Now, as the matrix is symmetric,
\begin{displaymath}
\|\widehat\Psi_{m}^{-1/2}(\widehat\Phi_m')^*
\widehat\Phi_m'\widehat\Psi_{m}^{-1/2}\|_{\rm op} =
\sup_{\mathbf x\in\mathbb R^m}\mathbf x\widehat\Psi_{m}^{-1/2}
(\widehat\Phi_m')^*\widehat\Phi_m'\widehat\Psi_m^{-1/2}{\mathbf x} =
n\sup_{t\in\mathcal S_m :\|t\|_n = 1}\|t'\|_{n}^{2}.
\end{displaymath}
So, clearly, $m\mapsto\widehat V(m) =\sigma^2m/n\sup_{t\in\mathcal S_m :\|t\|_n = 1}\|t'\|_{n}^{2}$ is increasing. $\Box$
%


%
\subsection{Theorem \ref{bound_GL_estimator} and its proof}\label{sec-thm}

\subsubsection{Statement of Theorem \ref{bound_GL_estimator}}

%
\begin{theorem}\label{bound_GL_estimator}
Let Assumption \ref{assumption_f} be fulfilled. Let also Assumption \ref{assumption_Psi}($m + p$) be fulfilled for every $m\in\mathcal M_n$. Moreover, assume that there exists $\kappa > 0$ such that $\mathbb E(\exp(\kappa\varepsilon_{1}^{2})) <\infty$, that
\begin{equation}\label{bound_GL_estimator_1}
\sup_{n\in\mathbb N\backslash\{0\}}
\left\{
\frac{1}{\log(n)}
\sum_{m\leqslant n}
\frac{\mathfrak L'(m)}{\mathfrak L(m)}[
\exp(-{\tt a}_1 m) +
\exp(-{\tt a}_2\sqrt{\mathfrak L(m)})]\right\}
<\infty\textrm{ $;$ $\forall{\tt a}_1,\forall{\tt a}_2 > 0$}
\end{equation}
with
\begin{displaymath}
\mathfrak L'(m) :=\sup_{x\in I}\sum_{j = 0}^{m - 1}\varphi_j'(x)^2,
\end{displaymath}
and that there exists $q\in\mathbb N\backslash\{0\}$ such that
\begin{equation}\label{bound_GL_estimator_2}
\sup_{n\in\mathbb N\backslash\{0\}}\left\{
\frac{1}{n^{q/2}\log(n)}
\sum_{m\leqslant n}
\frac{\mathfrak L'(m)}{\mathfrak L (m)}\right\}
<\infty.
\end{equation}
Then, there exists a constant $\mathfrak c_{\ref{bound_GL_estimator}} > 0$, not depending on $n$, such that
\begin{eqnarray*}
 \mathbb E(\|\widehat b' - b'\|_{n}^{2})
 & \leqslant &
 \mathfrak c_{\ref{bound_GL_estimator}}
 \inf_{m\in\mathcal M_n}
 \left\{\mathbb E(\|\widehat b_{m}^{\prime,1} - b'\|_{n}^{2}) +\kappa_1 V(m)+
 \|\Delta_{m,m + p}^{f,1}\|_{{\rm op}}^{2}\|b - b_m\|_{f}^{2}\right.\\
 & & \hspace{2cm}
 \left. +
 \sup_{m'\in\mathcal M_{n}^{+} : m' > m}
 \left\{\|\Delta_{m',m' + p}^{f,1}\|_{\rm op}^{2}(\|b_{M_{n}^{+}} - b_{m'}\|_{f}^{2} +
 \|b - b_{M_{n}^{+}}\|_{\infty}^{2})\right\}\right.\\
 & & \hspace{2cm}
 \left. +\frac{9}{2}\|f\|_{\infty}
 \sup_{m'\in {\mathcal M}_{n}^{+} : m' > m}
 \|b_{m'}' - b_m'\|^2\right\}
 +\frac{\mathfrak c_{\ref{bound_GL_estimator}}}{n}.
\end{eqnarray*}
\end{theorem}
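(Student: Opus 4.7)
My plan is to follow the Goldenshluger--Lepski paradigm, with extra care because $\widehat{\mathcal M}_n$ is random and the basis is potentially non-compactly supported. The proof is organized around: (I) an algebraic oracle decomposition using the definitions of $A$ and $\widehat m$; (II) control of the difference between the data-driven and theoretical dimension sets and penalties; (III) a bias/variance split of $A(m)$; (IV) a uniform concentration inequality for the stochastic part. For (I), write for any $m\in\widehat{\mathcal M}_n$,
\[
\|\widehat b' - b'\|_n^2 \leq 3\|\widehat b_{\widehat m}^{\prime,1} - \widehat b_{m \wedge \widehat m}^{\prime,1}\|_n^2 + 3\|\widehat b_{m \wedge \widehat m}^{\prime,1} - \widehat b_m^{\prime,1}\|_n^2 + 3\|\widehat b_m^{\prime,1} - b'\|_n^2;
\]
the first two summands are bounded by $A(m) + \kappa_0 \widehat V(\widehat m)$ and $A(\widehat m) + \kappa_0 \widehat V(m)$ (take $m' = \widehat m$ and $m' = m$ in the sup defining $A$), and the minimality of $\widehat m$ combined with $\kappa_0 \leq \kappa_1$ eliminates $\widehat V(\widehat m)$ to give
\[
\|\widehat b' - b'\|_n^2 \leq 6 A(m) + 3(\kappa_0 + \kappa_1) \widehat V(m) + 3\|\widehat b_m^{\prime,1} - b'\|_n^2.
\]

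For (II), a matrix Bernstein inequality applied to $\widehat\Psi_m - \Psi_m$ (as in Cohen \textit{et al.}~\cite{CDL13,CDL19} and \cite{CGC20}) shows that the event $\Omega_n = \{\mathcal M_n \subset \widehat{\mathcal M}_n \subset \mathcal M_n^+\}$ has complementary probability polynomially small in $n$, provided $\mathfrak d$ is calibrated in terms of $\|f\|_\infty$, and that on $\Omega_n$ the operator norm $\|\widehat\Psi_{m'}^{-1}\|_{\mathrm{op}}$ is within a constant factor of $\|\Psi_{m'}^{-1}\|_{\mathrm{op}}$ for every $m' \in \widehat{\mathcal M}_n$; hence $\mathbb E[\widehat V(m) \mathbf 1_{\Omega_n}] \leq \mathrm{const}\cdot V(m)$ for $m \in \mathcal M_n$. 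Choosing $m \in \mathcal M_n$ in Step~(I) ensures $m \in \widehat{\mathcal M}_n$ on $\Omega_n$ and $\widehat b_m^{\prime,1} = \widetilde b_m^{\prime,1}$ there, so Proposition~\ref{elaborate_risk_bound_estimator_1} applies to the third summand and produces the $\|\Delta_{m,m+p}^{f,1}\|_{\mathrm{op}}^2\|b - b_m\|_f^2$ term in the announced bound. Off $\Omega_n$, the fourth-moment assumptions on $Y_1$ and $b'(X_1)$, together with the $\|\Delta_{m,m+p}\|_{\mathrm{op}}$ bound and $|\widehat{\mathcal M}_n| \leq n$, produce the $\mathfrak c_{\ref{bound_GL_estimator}}/n$ remainder, in which hypothesis~(\ref{bound_GL_estimator_2}) plays a key role.

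For (III), for $m' > m$ (the case $m' \leq m$ contributes zero to $A(m)$), decompose
\[
\widehat b_{m'}^{\prime,1} - \widehat b_m^{\prime,1} = \bigl(\widehat b_{m'}^{\prime,1} - b_{m'}'\bigr) - \bigl(\widehat b_m^{\prime,1} - b_m'\bigr) + \bigl(b_{m'}' - b_m'\bigr).
\]
The deterministic middle piece, after taking expectation of its squared empirical norm, is at most $\|f\|_\infty\|b_{m'}' - b_m'\|^2$, yielding the last summand of the stated inequality. The two stochastic pieces $\|\widehat b_{m_*}^{\prime,1} - b_{m_*}'\|_n^2$, $m_* \in \{m, m'\}$, should be bounded by a constant multiple of $\widehat V(m_*)$ plus bias residuals, uniformly in $m' \in \widehat{\mathcal M}_n$; absorbed into the $\kappa_0 \widehat V(m')$ term inside the definition of $A$, they yield exactly what is needed. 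The bias residuals arise because $b \notin \mathcal S_{m'+p}$ in general: one approximates $b$ first by $b_{M_n^+}$ and then by $b_{m'+p}$, producing the terms $\|b - b_{M_n^+}\|_\infty^2$ and $\|b_{M_n^+} - b_{m'}\|_f^2$ that appear in the supremum over $m' > m$.

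The main obstacle I expect is the uniform deviation inequality required in (IV). One must control
\[
\mathbb E\Bigl[\max_{m' \in \widehat{\mathcal M}_n}\bigl(\|\widehat b_{m'}^{\prime,1} - b_{m'}'\|_n^2 - c\,\widehat V(m')\bigr)_+\Bigr],
\]
which amounts to a Talagrand- or Bernstein-type inequality for the sub-Gaussian empirical process driven by the $\varepsilon_i$'s, indexed by the unit ball of $\operatorname{span}(\varphi_1', \ldots, \varphi_{m'}')$ in $\|\cdot\|_f$. The metric entropy of that class is governed by $\mathfrak L'(m')$ rather than $\mathfrak L(m')$, which explains the presence of the summability condition~(\ref{bound_GL_estimator_1}), while~(\ref{bound_GL_estimator_2}) controls polynomial tails on $\Omega_n^c$. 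After summing the resulting exponential tails over $m' \leq n$, taking expectation in the oracle decomposition, and finally taking the infimum over $m \in \mathcal M_n$, the announced oracle inequality follows with an absolute constant $\mathfrak c_{\ref{bound_GL_estimator}}$.
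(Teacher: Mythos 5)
Your plan is correct and follows essentially the same route as the paper: the same Goldenshluger--Lepski decomposition restricted to the event $\{\mathcal M_n\subset\widehat{\mathcal M}_n\subset\mathcal M_{n}^{+}\}\cap\Omega_n$ (whose complement gives the $1/n$ remainder), the comparison $\mathbb E[\widehat V(m)\mathbf 1_{\Omega_n}]\lesssim V(m)$, a Talagrand inequality conditional on $\mathbf X$ for the noise part (where a truncation of the $\varepsilon_i$'s is what actually consumes condition (\ref{bound_GL_estimator_2}), rather than the off-event as you suggest), and a separate treatment of the conditional bias $\mathbb E_{\mathbf X}(\widehat b_{m'}^{\prime,1})-b_{m'}'=\widehat P_{m'}(b-b_{m'})$, which is precisely what produces the $\|b_{M_n^+}-b_{m'}\|_f^2+\|b-b_{M_n^+}\|_\infty^2$ and $\|\Delta_{m,m+p}^{f,1}\|_{\rm op}^2\|b-b_m\|_f^2$ terms. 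The only organizational difference is that the paper centers the fluctuation at $\mathbb E_{\mathbf X}(\widehat b_{m'}^{\prime,1})$ from the outset and invokes the monotonicity of $\widehat V$ (Lemma \ref{increase_penalty}) to handle the $m\wedge m'$ index, which is exactly the separation your ``bias residuals'' step amounts to.
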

\noindent
Conditions (\ref{bound_GL_estimator_1}) and (\ref{bound_GL_estimator_2}) are fulfilled by all the bases we mentioned (trigonometric, Laguerre, Hermite, Legendre) because $\mathfrak L(m)$ and $\mathfrak L'(m)$ have the order of  powers of $m$. The condition on $\varepsilon_1$ is fulfilled by Gaussian random variables for any $\kappa < 1/(2\sigma^2)$, and by random variables with a compactly supported distribution. The quantity
\begin{displaymath}
\inf_{m\in\mathcal M_n}
\left\{\mathbb E(\|\widehat b_{m}^{\prime, 1} - b'\|_{n}^{2}) +\kappa_1V(m)  + 
\|\Delta_{m,m + p}^{f,1}\|_{{\rm op}}^{2}\|b - b_m\|_{f}^{2}\right\}
\end{displaymath}
has the order of the minimum risk over the estimators of the collection in this problem. The three additional terms are due to the bound on the {\it bias term}
\begin{displaymath}
\mathbb E\left(\sup_{m'\in\widehat{\mathcal M}_n}
\|\mathbb E_{\mathbf X}(\widehat b_{m\wedge m'}^{\prime,1}) -
\mathbb E_{\mathbf X}(\widehat b_{m'}^{\prime,1})\|_n^2\right). 
\end{displaymath}
Concretely, Theorem \ref{bound_GL_estimator}  can be applied some of our specific bases. 

\subsubsection{Proof of Theorem \ref{bound_GL_estimator}}

Throughout this subsection, for the sake of readability, we omit the superscript $1$ and write $\widehat b'_m$ instead of $\widehat{b}^{\prime,1}_m$.
\\
\\
Following the lines of the proof of Theorem 2 in Comte and Genon-Catalot \cite{CGC20}, we consider the sets
\begin{displaymath}
\Xi_n =\{\omega :\mathcal M_n
\subset\widehat{\mathcal M}_n(\omega)\subset {\mathcal M}_{n}^{+}\}
\quad {\rm and}\quad
\Omega_n =\bigcap_{m\in\mathcal M_{n}^{+}}\Omega_m,
\end{displaymath}
where 
\begin{displaymath}
\mathcal M_{n}^{+} :=
\left\{m\in\{1,\dots,n\} :
\mathfrak{L}(m + p)(\|\Psi_{m + p}^{-1}\|_{\rm op}^{2}\vee 1)\leqslant
4\mathfrak c\cdot\frac{n}{\log(n)}\right\}.
\end{displaymath}
First,
\begin{displaymath}
\mathbb E\left[\|\widehat b' - b'\|_{n}^{2}\mathbf 1_{(\Omega_n\cap \Xi_n)^c}\right]
\leqslant\frac{\mathfrak c_1}{n}
\quad\textrm{with}\quad\mathfrak c_1 > 0.
\end{displaymath}
This follows from the proof of Proposition \ref{elaborate_risk_bound_estimator_1}, using that $\mathbb P(\Xi_{n}^{c})\leqslant\mathfrak c_3/n^8$  and $\mathbb P(\Omega_{n}^{c})\leqslant\mathfrak c_4/n^8$. For these last probabilities, we refer to Comte and Genon-Catalot \cite{CGC20}, Lemmas 7 and 9, where the choice of $\mathfrak d = 1/[\mathfrak f (\|f\|_{\infty}\vee 1 + 3^{-1})]$ with $\mathfrak f = 192$ is explained. Here, the constant $\mathfrak f$ has to be increased to obtain the power $n^{-8}$ instead of $n^{-2}$.
\\
\\
Now, we control the loss of $\widehat b'_{\widehat m}$ on $\Omega_n\cap \Xi_n$. For any $m\in\mathcal M_n$, using that on $\Xi_n$ it also holds that $m\in \widehat{\mathcal M}_n$, we have
\begin{eqnarray}
 \nonumber
 \|\widehat b'_{\widehat m} - b'\|_{n}^{2}
 & \leqslant &
 3(\|\widehat b_{\widehat m}' -\widehat b_{\widehat m\wedge m}'\|_{n}^{2}
 +\|\widehat b_{\widehat m\wedge m}' -\widehat b_m'\|_{n}^{2}
 +\|\widehat b'_m - b'\|_{n}^{2})\\
 \nonumber
 & \leqslant &
 3(A(m) +\kappa_0\widehat V(\widehat m) +
 A(\widehat m) +\kappa_0\widehat V(m) +
 \|\widehat b_m' - b'\|_{n}^{2})\\
 \label{bound_GL_estimator_3}
 & \leqslant &
 6(A(m) +\kappa_1\widehat V(m)) + 3\|\widehat b_m' - b'\|_{n}^{2}
 \mbox{ as }
 \kappa_0\leqslant\kappa_1.
\end{eqnarray}
Moreover,
\begin{eqnarray*}
 A(m)
 & \leqslant &
 3\sup_{m\in\widehat{\mathcal M}_n}
 \left\{\|\widehat b_m' -\mathbb E_{\mathbf X}(\widehat b_m')\|_{n}^{2} -\frac{\kappa_0}{6}\widehat V(m)\right\}_+\\
 & &
 + 3\sup_{m'\in\widehat{\mathcal M}_n}
 \left\{\|\widehat b_{m\wedge m'}' -\mathbb E_{\mathbf X}(\widehat b_{m\wedge m'}')\|_{n}^{2} -\frac{\kappa_0}{6} \widehat V(m')\right\}_+\\
 & &
 + 3\sup_{m'\in\widehat{\mathcal M}_n}
 \|\mathbb E_{\mathbf X}(\widehat b_{m\wedge m'}') -\mathbb E_{\mathbf X}(\widehat b'_{m'})\|_{n}^{2},
\end{eqnarray*}
and since
\begin{displaymath}
\sup_{m'\in\widehat{\mathcal M}_n}\{\quad\cdots\quad\} =
\max\left(\sup_{m'\in\widehat{\mathcal M}_n : m'\leqslant m}\{\quad\cdots\quad\}
\textrm{ $;$ }
\sup_{m'\in\widehat{\mathcal M}_n : m'\geqslant m}\{\quad\cdots\quad\}\right),
\end{displaymath}
by Lemma \ref{increase_penalty} ($m\mapsto \widehat V(m)$ is increasing),
\begin{eqnarray*}
 & &
 \sup_{m'\in\widehat{\mathcal M}_n}
 \left\{\|\widehat b'_{m\wedge m'} -\mathbb E_{\mathbf X}(\widehat b'_{m\wedge m'})\|_{n}^{2} -\frac{\kappa_0}{6} \widehat V(m')\right\}_+\\
 & &
 \hspace{1cm}
 \leqslant
 \max\left(
 \sup_{m'\in\widehat{\mathcal M}_n}
 \left\{\|\widehat b_{m'}' -\mathbb E_{\mathbf X}(\widehat b_{m'}')\|_{n}^{2} -\frac{\kappa_0}6\widehat V(m')\right\}_+
 \textrm{ $;$ }
 \left\{\|\widehat b_m' -\mathbb E_{\mathbf X}(\widehat b_m')\|_{n}^{2} -\frac{\kappa_0}{6}\widehat V(m)\right\}_+\right)\\
 & &
 \hspace{1cm}
 \leqslant
 \sup_{m\in \widehat{\mathcal M}_n}
 \left\{\|\widehat b_m' -\mathbb E_{\mathbf X}(\widehat b_m')\|_{n}^{2} -\frac{\kappa_0}{6}\widehat V(m)\right\}_+.
\end{eqnarray*}
Thus,
\begin{equation}\label{bound_GL_estimator_4}
A(m)\leqslant
6\sup_{m\in\widehat{\mathcal M}_n}\left\{\|\widehat b_m' -\mathbb E_{\mathbf X}(\widehat b_m')\|_{n}^{2}
-\frac{\kappa_0}{6}\widehat V(m)\right\}_+ +
3 \sup_{m'\in \widehat{\mathcal M}_n}
\|\mathbb E_{\mathbf X}(\widehat b_{m\wedge m'}') - \mathbb E_{\mathbf X}(\widehat b_{m'}')\|_{n}^{2}.
\end{equation}
The following lemma provides a suitable bound on the first term in the right-hand side of Inequality (\ref{bound_GL_estimator_4}) obtained via the conditional Talagrand inequality.
%


%
\begin{lemma}\label{conditional_Talagrand}
Let Assumption \ref{assumption_f} be fulfilled. Let also Assumption \ref{assumption_Psi}($m + p$) be fulfilled for every $m\in\mathcal M_n$. Moreover, assume that there exists $\kappa > 0$ such that $\mathbb E(\exp(\kappa\varepsilon_{1}^{2})) <\infty$, and that Conditions (\ref{bound_GL_estimator_1}) and (\ref{bound_GL_estimator_2}) hold. Then,
\begin{displaymath}
\mathbb E\left[
\sup_{m\in\widehat{\mathcal M}_n}\left\{
\|\widehat b_m' -\mathbb E_{\mathbf X}(\widehat b_m')\|_{n}^{2}
-\frac{\kappa_0}{6}\widehat V(m)\right\}_+\right]
\leqslant\frac{\mathfrak c_{\ref{conditional_Talagrand}}}{n},
\end{displaymath}
where $\mathfrak c_{\ref{conditional_Talagrand}} > 0$ is a deterministic constant not depending on $n$.
\end{lemma}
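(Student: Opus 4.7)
The plan is to combine the variational identity from Proposition \ref{increase_variance} with a conditional Talagrand-type deviation inequality, then sum the resulting pointwise bounds over the deterministic collection $\mathcal M_n^+$ by invoking Conditions (\ref{bound_GL_estimator_1}) and (\ref{bound_GL_estimator_2}). On the event $\Xi_n$ introduced in the proof of Theorem \ref{bound_GL_estimator}, one has $\widehat{\mathcal M}_n\subset\mathcal M_n^+$, so the random supremum can be replaced by one over a deterministic collection; off of $\Xi_n\cap\Omega_n$ the contribution to the expectation is $O(1/n)$ via $\mathbb P(\Xi_n^c)+\mathbb P(\Omega_n^c)\lesssim n^{-8}$, Lemma \ref{Omega}, $\mathbb E(Y_1^4)<\infty$, and Cauchy--Schwarz, exactly as in the proof of Proposition \ref{elaborate_risk_bound_estimator_1}.

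For each fixed $m\in\mathcal M_n^+$, I would write
\[
\|\widehat b_m'-\mathbb E_{\mathbf X}(\widehat b_m')\|_n^2 = \sup_{t\in\mathcal S_m,\,\|t\|_n=1}\nu_n(t)^2,\qquad \nu_n(t):=\langle\varepsilon,t'\rangle_n,
\]
following the computation in Proposition \ref{increase_variance}. Conditionally on $\mathbf X$, the natural centering is $\mathbb E_{\mathbf X}[\sup\nu_n^2]=\sigma^2 n^{-1}\mathrm{Tr}(\widehat\Psi_m^{-1}\widehat\Psi_m')\leq\widehat V(m)$, since $\mathrm{Tr}(\widehat\Psi_m^{-1}\widehat\Psi_m')\leq m\,\|\widehat\Psi_m^{-1}\widehat\Psi_m'\|_{\rm op}$. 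To deal with unbounded noise I would split $\varepsilon_i=\varepsilon_i^{(1)}+\varepsilon_i^{(2)}$ at a threshold $K_n=c\sqrt{\log n}$: the sub-Gaussian tail implied by $\mathbb E[\exp(\kappa\varepsilon_1^2)]<\infty$ makes the contribution of the tail part $\varepsilon^{(2)}$ smaller than any prescribed power of $1/n$ by taking $c$ large, which is exactly the regime in which Condition (\ref{bound_GL_estimator_2}) is designed to absorb the residual error uniformly over $m\leq n$.

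To the centered, bounded part I would then apply Bousquet--Klein's version of Talagrand's inequality, conditionally on $\mathbf X$, with parameters
\[
H^2\leq\widehat V(m),\quad v\leq\sigma^2 n^{-1}\|\widehat\Psi_m^{-1}\widehat\Psi_m'\|_{\rm op},\quad M\lesssim K_n n^{-1}\sqrt{\mathfrak L'(m+p)\,\|\widehat\Psi_{m+p}^{-1}\|_{\rm op}},
\]
the bound on $M$ coming from $\sup_{\|t\|_n=1}\|t'\|_\infty^2\leq\mathfrak L'(m)\|\widehat\Psi_m^{-1}\|_{\rm op}$ together with the inclusion $\mathcal S_m\subset\mathcal S_{m+p}$. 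Choosing $\kappa_0$ large enough to absorb the centering $(1+\eta)\widehat V(m)$ into $(\kappa_0/6)\widehat V(m)$, and integrating the resulting Bernstein-type tail at the level $x_m:=\mathfrak a_1 m + \mathfrak a_2\sqrt{\mathfrak L(m+p)}$, I would obtain
\[
\mathbb E\bigl[(\sup\nu_n^2-(\kappa_0/6)\widehat V(m))_+\bigr]\lesssim \frac{1}{n}\frac{\mathfrak L'(m+p)}{\mathfrak L(m+p)}\bigl(e^{-\mathfrak a_1 m}+e^{-\mathfrak a_2\sqrt{\mathfrak L(m+p)}}\bigr)+\frac{\mathfrak L'(m+p)}{n^{1+q/2}\mathfrak L(m+p)}.
\]
Summing over $m\in\mathcal M_n^+\subset\{1,\dots,n\}$, the two resulting series are exactly those tamed by Conditions (\ref{bound_GL_estimator_1}) and (\ref{bound_GL_estimator_2}), producing the announced $\mathfrak c_{\ref{conditional_Talagrand}}/n$ bound.

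The main difficulty will be the careful identification of the Talagrand parameters $H$, $v$, $M$ with the right dependence on $m$, $\mathfrak L(m+p)$, $\mathfrak L'(m+p)$, and $\|\widehat\Psi_{m+p}^{-1}\|_{\rm op}$, together with the empirical-to-theoretical replacement $\widehat\Psi_{m+p}\leftrightarrow \Psi_{m+p}$ on $\Omega_n$, so that the final sum against the exponential factors matches precisely the form accommodated by the two summability conditions in the statement. In particular, the specific choice $x_m\propto m+\sqrt{\mathfrak L(m+p)}$ is tailored so that each of the two exponential terms in Condition (\ref{bound_GL_estimator_1}) appears naturally at the $1/n$ scale after summation.
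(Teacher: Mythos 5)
Your overall scheme (truncate the noise at a $\sqrt{\log n}$ threshold using $\mathbb E[\exp(\kappa\varepsilon_1^2)]<\infty$, apply Talagrand's inequality conditionally on $\mathbf X$ with $H^2\leqslant\widehat V(m)$, and sum over at most $n$ models using Conditions (\ref{bound_GL_estimator_1}) and (\ref{bound_GL_estimator_2})) is the same as the paper's. But your opening step contains a genuine flaw: the identity $\|\widehat b_m'-\mathbb E_{\mathbf X}(\widehat b_m')\|_n^2=\sup_{t\in\mathcal S_m,\,\|t\|_n=1}\langle\varepsilon,t'\rangle_n^2$ is false, and so is the inequality "$\leqslant$" that you would need in order to use the right-hand side as an upper bound. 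Indeed, the left-hand side equals $n^{-2}\,\varepsilon^*\widehat\Phi_m\widehat\Psi_m^{-1}\widehat\Psi_m'\widehat\Psi_m^{-1}\widehat\Phi_m^*\varepsilon$ with $\widehat\Psi_m'=n^{-1}(\widehat\Phi_m')^*\widehat\Phi_m'$, i.e.\ a quadratic form in $\widehat\Phi_m^*\varepsilon$, whereas your supremum equals $n^{-2}\,\varepsilon^*\widehat\Phi_m'\widehat\Psi_m^{-1}(\widehat\Phi_m')^*\varepsilon$, a quadratic form in $(\widehat\Phi_m')^*\varepsilon$. These two random variables only share the same conditional expectation (that is precisely what Proposition \ref{increase_variance} computes); pathwise, taking $\varepsilon$ orthogonal to the columns of $\widehat\Phi_m'$ but not to those of $\widehat\Phi_m$ makes your supremum vanish while the fluctuation norm stays positive. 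Consequently the Talagrand machinery in your plan is applied to the wrong empirical process. The paper avoids this by dualizing against test functions paired with the fluctuation itself, $\nu_n(t)=\langle t,\widehat b_m'-\mathbb E_{\mathbf X}(\widehat b_m')\rangle_n=n^{-1}\langle\Theta_t(\mathbf X),\varepsilon\rangle_{2,n}$ with $\Theta_t(\mathbf X)=n^{-1}\widehat\Phi_m\widehat\Psi_m^{-1}(\widehat\Phi_m')^*t(\mathbf X)$, and identifies $H$, $M$, $v$ for that class; your argument needs this (or an equivalent correct variational representation) before any of the subsequent bounds make sense.

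Two secondary points. First, your detour through $\Xi_n\cap\Omega_n$, the deterministic collection $\mathcal M_n^+$ and an empirical-to-theoretical replacement $\widehat\Psi_{m+p}\leftrightarrow\Psi_{m+p}$ is unnecessary for this lemma: the paper keeps the supremum over the random collection $\widehat{\mathcal M}_n$ and only uses its defining empirical constraint $\mathfrak L(m+p)(\|\widehat\Psi_{m+p}^{-1}\|_{\rm op}\vee1)\leqslant\mathfrak c\,n/\log(n)$ (it even notes the weaker constraint suffices), since Talagrand is applied conditionally on $\mathbf X$ and all parameters $H$, $M$, $v$ are empirical. Your route could be made to work, but the complement event then has to be handled, and the moment $\mathbb E(Y_1^4)<\infty$ you invoke there is not among the lemma's hypotheses (only $\varepsilon$ enters the quantity to be bounded, so the exponential moment condition would have to be used instead). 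Second, your parameter bounds have normalization slips (e.g.\ the factors of $n$ in $v$ and $M$); what actually drives the summation are the ratios $nH^2/v\asymp m$ and $nH/M\asymp\sqrt{\mathfrak L(m)}$ together with the prefactor $\|\widehat\Psi_m^{-1}\|_{\rm op}\mathfrak L'(m)\leqslant\mathfrak c\,n\,\mathfrak L'(m)/(\log(n)\mathfrak L(m))$, which is how the series in Conditions (\ref{bound_GL_estimator_1}) and (\ref{bound_GL_estimator_2}) appear; these would need to be derived, not posited.
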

\noindent
By Inequalities (\ref{bound_GL_estimator_3}) and (\ref{bound_GL_estimator_4}), and then by Lemma \ref{conditional_Talagrand}, for any $m\in {\mathcal M}_n$, 
\begin{eqnarray*}
 \mathbb E(\|\widehat b' - b'\|_{n}^{2}\mathbf 1_{\Omega_n\cap\Xi_n})
 & \leqslant &
 36\mathbb E\left[\sup_{m\in\widehat{\mathcal M}_n}\left\{\|\widehat b_m' -\mathbb E_{\mathbf X}(\widehat b_m')\|_{n}^{2}
 -\frac{\kappa_0}{6}\widehat V(m)\right\}_+\right]\\
 & & +
 18\mathbb E\left[\mathbf 1_{\Xi_n\cap \Omega_n}\sup_{m'\in \widehat{\mathcal M}_n}
 \|\mathbb E_{\mathbf X}(\widehat b_{m\wedge m'}') - \mathbb E_{\mathbf X}(\widehat b_{m'}')\|_{n}^{2}\right]\\
 & &
 \hspace{4cm}
 + 6\kappa_1\mathbb E(\widehat V(m)\mathbf 1_{\Omega_n})
 + 3\mathbb E(\|\widehat b_m' - b'\|_{n}^{2})\\
 & \leqslant &
 3\mathbb E(\|\widehat b_m' - b'\|_{n}^{2})
 + 6\kappa_1\mathfrak c_2V(m) +\frac{\mathfrak c_{\ref{conditional_Talagrand}}}{n}\\
 & &
 + 18\mathbb E\left[\mathbf 1_{\Xi_n\cap\Omega_n}\sup_{m'\in \widehat{\mathcal M}_n}
 \|\mathbb E_{\mathbf X}(\widehat b_{m\wedge m'}') - \mathbb E_{\mathbf X}(\widehat b_{m'}')\|_{n}^{2}\right]
\end{eqnarray*}
with $\mathfrak c_2 > 0$. The inequality $\mathbb E(\widehat V(m)\mathbf 1_{\Omega_n})\leqslant\mathfrak c_2V(m)$ is obtained via the same method than in the proof of Proposition \ref{elaborate_risk_bound_estimator_2}. 
\\
\\
Let us now control
\begin{displaymath}
B_{m,n} :=
\frac{1}{n}
\mathbb E\left[\mathbf 1_{\Xi_n\cap \Omega_n}
\sup_{m'\in\widehat{\mathcal M}_n : m' > m}
\|\mathbb E_{\mathbf X}(\widehat b_{m}^{\prime,1}(\mathbf X)) -\mathbb E_{\mathbf X}(\widehat b_{m'}^{\prime,1}(\mathbf X))\|_{2,n}^{2}\right].
\end{displaymath}
Thanks to Equality (\ref{rough_risk_bound_estimator_1_1}), 
\begin{eqnarray*}
 \mathbb E_{\mathbf X}(\widehat b_{m'}^{\prime,1}(\mathbf X)) -
 \mathbb E_{\mathbf X}(\widehat b_{m}^{\prime,1}(\mathbf X)) & = &
 \mathbb E_{\mathbf X}(\widehat b_{m'}^{\prime,1}(\mathbf X)) - b_{m'}'(\mathbf X)
 - (\mathbb E_{\mathbf X}(\widehat b_{m}^{\prime,1}(\mathbf X)) - b_m'(\mathbf X))
 + b_{m'}'(\mathbf X) - b_m'(\mathbf X)\\
 & = &
 \widehat P_{m'}(b(\mathbf X) - b_{m'}(\mathbf X)) +
 \widehat P_{m}(b(\mathbf X) - b_{m}(\mathbf X)) +
 b_{m'}'(\mathbf X) - b_m'(\mathbf X).
\end{eqnarray*}
Since we are on $\Omega_n$, and since $m,m'\in\mathcal M_{n}^{+}$ on $\Xi_n$ when $m\in\mathcal M_n$ and $m'\in\widehat{\mathcal M}_n$,
\begin{displaymath}
\textrm{Sp}\left[\Psi_{m' + p}^{-1/2}\widehat\Psi_{m' + p}\Psi_{m' + p}^{-1/2}\right]
\subset [1/2,3/2]
\textrm{ and }
\textrm{Sp}\left[\Psi_{m'}^{1/2}\widehat\Psi_{m'}^{-1}\Psi_{m'}^{1/2}\right]
\subset [2/3,2],
\end{displaymath}
and the same for $m$ instead of $m'$. So, thanks to Inequality (\ref{elaborate_risk_bound_estimator_1_3}),
\begin{displaymath}
\|\widehat P_{m'}\|_{\rm op}^{2}
\leqslant
3\|\Delta_{m',m' + p}^{f,1}\|_{\rm op}^{2}.
\end{displaymath}
In the same way,
\begin{displaymath}
\|\widehat P_m\|_{\rm op}^{2}
\leqslant
3\|\Delta_{m,m + p}^{f,1}\|_{\rm op}^{2}.
\end{displaymath}
Thus, on $\Omega_n$,
\begin{eqnarray*}
 \frac{1}{n}
 \|\widehat P_{m'}(b(\mathbf X) - b_{m'}(\mathbf X))\|_{2,n}^{2}
 & \leqslant &
 3\|\Delta_{m',m' + p}^{f,1}\|_{{\rm op}}^{2}
 \|b - b_{m'}\|_{n}^{2}\\
 & \leqslant &
 6\|\Delta_{m',m' + p}^{f,1}\|_{{\rm op}}^{2}
 (\|b - b_{M_n}\|_{n}^{2} +\|b_{M_n} - b_{m'}\|_{n}^{2})\\
 & \leqslant &
 6\|\Delta_{m',m' + p}^{f,1}\|_{{\rm op}}^{2}
 (\|b - b_{M_{n}^{+}}\|_{\infty}^{2} + 3/2\|b_{M_{n}^{+}} - b_{m'}\|_{f}^{2})
\end{eqnarray*}
where $M_{n}^{+}$ is the maximal element of $\mathcal M_{n}^{+}$, 
and 
\begin{displaymath}
\frac{1}{n}
\|\widehat P_m(b(\mathbf X) - b_m(\mathbf X))\|_{2,n}^{2}
\leqslant
3\|\Delta_{m,m + p}^{f,1}\|_{\rm op}^{2}
\|b - b_m\|_{n}^{2}.
\end{displaymath}
For the last term, on $\Omega_n$,
\begin{displaymath}
\|b_{m'}'(\mathbf X) - b_m'(\mathbf X)\|_{n}^{2}
\leqslant\frac{3}{2}\|b_{m'}' - b_m'\|_{f}^{2}
\leqslant\frac{3}{2}\|f\|_{\infty}\|b_{m'}' - b_m'\|^2.
\end{displaymath}
Therefore,
\begin{eqnarray*}
 B_{m,n} & \leqslant &
 9\|\Delta_{m,m + p}^{f,1}\|_{\rm op}^{2}\|b - b_m\|_{f}^{2} +
 18\sup_{m'\in\mathcal M_{n}^{+} : m' > m}
 \left\{\|\Delta_{m',m' + p}^{f,1}\|_{\rm op}^{2}(\|b_{M_{n}^{+}} - b_{m'}\|_{f}^{2} +
 \|b - b_{M_{n}^{+}}\|_{\infty}^{2})\right\}\\
 & &
 +\frac{9}{2}\|f\|_{\infty}
 \sup_{m'\in\mathcal M_{n}^{+} : m' > m}
 \|b_{m'}' - b_m'\|^2.
\end{eqnarray*}
This concludes the proof. $\Box$
%


%
\subsection{Proof of Lemma \ref{conditional_Talagrand}}
We emphasize that the lemma would be true for $\widehat{\mathcal M}_n$ replaced by the weaker (and more natural)
\begin{displaymath}
\left\{m\in\{1,\dots,n\} :
\mathfrak{L}(m + p)(\|\widehat\Psi_{m + p}^{-1}\|_{\rm op}\vee 1)\leqslant
\mathfrak c\frac{n}{\log(n)}\right\}
\end{displaymath}
with $\mathfrak c$ defined in Assumption \ref{assumption_Psi}($m + p$). We only use this constraint in the following.
\\
\\
First of all, for any $m\in\widehat{\mathcal M}_n$, since $\|\psi\|_{n}^{2} =\sup_{t\in\mathcal S_m :\|t\|_n = 1}\langle t,\psi\rangle_{n}^{2}$ for every $\psi\in\mathcal S_m$,
\begin{displaymath}
\|\widehat b_m' -\mathbb E_{\mathbf X}(\widehat b_m')\|_{n}^{2} =
\sup_{t\in\mathcal S_m :\|t\|_n = 1}\nu_n(t)^2
\end{displaymath}
with, for any $\vec b = (b_1,\dots,b_m)\in\mathbb R^m$ and $t =\sum_{j = 1}^{m}b_j\varphi_j$,
\begin{displaymath}
\nu_n(t) =
\frac{1}{n}\langle t,\widehat\Phi_m'\widehat\Psi_{m}^{-1}\widehat\Phi_{m}^{*}\varepsilon\rangle_n =
\frac{1}{n^2}\langle [\widehat\Phi_m'\widehat\Psi_{m}^{-1}\widehat\Phi_{m}^{*}]^*t(\mathbf X),\varepsilon\rangle_{2,n} =
\frac{1}{n}\langle\Theta_t(\mathbf X),\varepsilon\rangle_{2,n},
\end{displaymath}
where
\begin{displaymath}
\Theta_t(\mathbf X) =
\frac{1}{n}\widehat\Phi_m\widehat\Psi_{m}^{-1}(\widehat\Phi_m')^*t(\mathbf X) =
\frac{1}{n}\widehat\Phi_m\widehat\Psi_{m}^{-1}(\widehat\Phi_m')^*\widehat\Phi_m\vec b.
\end{displaymath}
Note that $\nu_n(t) =\nu_{n}^{(1)}(t) +\nu_{n}^{(2)}(t)$, where $\nu_{n}^{(1)}(t) = n^{-1}\langle\Theta_t(\mathbf X),\varepsilon^{(1)}\rangle_{2,n}$ with $\varepsilon^{(1)} = (\varepsilon_i\mathbf 1_{|\varepsilon_i|\leqslant\mathfrak m_n} - {\mathbb E}(\varepsilon_i\mathbf 1_{|\varepsilon_i|\leqslant\mathfrak m_n}))_i$ and $\mathfrak m_n =(q\kappa^{-1}\log(n))^{1/2}$, and $\nu_{n}^{(2)}(t) = n^{-1}\langle\Theta_t(\mathbf X),\varepsilon^{(2)}\rangle_{2,n}$ with $\varepsilon^{(2)} = (\varepsilon_i\mathbf 1_{|\varepsilon_i| >\mathfrak m_n} -{\mathbb E}(\varepsilon_i\mathbf 1_{|\varepsilon_i| >\mathfrak m_n}) )_i$. On the one hand, in order to apply Talagrand's inequality to $\sup_{t\in\mathcal S_m :\|t\|_n = 1}\nu_{n}^{1}(t)^2$ conditionally to $(X_1,\dots,X_n)$, consider
\begin{eqnarray*}
 A_{m,n}(\mathbf X) & := &
 \mathbb E_{\mathbf X}\left(
 \sup_{t\in\mathcal S_m :\|t\|_n = 1}\nu_{n}^{(1)}(t)^2\right),\\
 B_{m,n} & := &
 \sup_{t\in\mathcal S_m :\|t\|_n = 1}
 \left\{\sup_{(e,x)\in [-2\mathfrak m_n,2\mathfrak m_n]\times I}|e\Theta_t(x)|
 \right\}
 \textrm{ and}\\
 C_{m,n}(\mathbf X) & := &
 \sup_{t\in\mathcal S_m :\|t\|_n = 1}
 \left\{
 \frac{1}{n}\sum_{i = 1}^{n}\textrm{var}_{\mathbf X}\left[
 \varepsilon_{i}^{(1)}\Theta_t(X_i)\right]
 \right\},
\end{eqnarray*}
and let us find suitable bounds on each of these random quantities.
\begin{itemize}
 \item\textbf{Bound on $A_{m,n}(\mathbf X)$.} Note that
 \begin{displaymath}
 \textrm{var}(\varepsilon_{1}^{(1)})
 \leqslant
 \mathbb E(\varepsilon_{1}^{2}\mathbf 1_{|\varepsilon_1|\leqslant\mathfrak m_n})
 \leqslant
 \mathbb E(\varepsilon_{1}^{2}) =
 \sigma^2.
 \end{displaymath}
 Then,
 \begin{eqnarray*}
  A_{m,n}(\mathbf X)
  & \leqslant &
  \frac{1}{n^3}\mathbb E_{\mathbf X}(\|\widehat\Phi_m'\widehat\Psi_{m}^{-1}\widehat\Phi_{m}^{*}\varepsilon^{(1)}\|_{2,n}^{2})
  =\frac{\textrm{var}(\varepsilon_{1}^{(1)})}{n}
  {\rm Tr}\left[\widehat\Phi_m'\widehat\Psi_{m}^{-1}(\widehat\Phi_m')^*\right]\\
  & \leqslant &
  \frac{\sigma^2}{n^2}
  {\rm Tr}\left[\widehat\Phi_m'\widehat\Psi_{m}^{-1}(\widehat\Phi_m')^*\right]
  \leqslant
  \frac{\sigma^2m}{n^2}
  \|\widehat\Psi_m^{-1}(\widehat \Phi_m')^*\widehat\Phi_m'\|_{{\rm op}} =: H^2.
 \end{eqnarray*}
 \item\textbf{Bound on $B_{m,n}$.} Since $m\in\widehat{\mathcal M}_n$, $m\|\widehat\Psi_{m}^{-1}\|_{{\rm op}}\leqslant (m + p)\|\widehat\Psi_{m + p}^{-1}\|_{{\rm op}}\leqslant\mathfrak cn/\log(n)$, and then
 \begin{eqnarray*}
  B_{m,n} & \leqslant &
  \frac{2\mathfrak m_n}{n}\sup_{\vec b :\|\widehat\Phi_m\vec b\|_{2,n} =\sqrt n}
  \left|\sum_{j = 1}^{m}[\widehat\Psi_{m}^{-1}(\widehat\Phi_m')^*\widehat\Phi_m\vec b]_j\varphi_j(x)\right|\\
  & \leqslant &
  \frac{2\mathfrak m_n}{n}\mathfrak L(m)^{1/2}
  \sup_{\vec b :\|\widehat\Phi_m\vec b\|_{2,n} =\sqrt n}
  \|\widehat\Psi_{m}^{-1}(\widehat\Phi_m')^*\widehat\Phi_m\vec b\|_{m,2}
  \leqslant
  \frac{2\mathfrak m_n}{\sqrt n}
  \sqrt{\mathfrak L(m)
  \|\widehat\Psi_{m}^{-1}(\widehat\Phi_m')^*\|_{{\rm op}}^{2}}\\
  & \leqslant &
  \frac{2\mathfrak m_n}{\sqrt n}
  \sqrt{\mathfrak L(m)
  \|\widehat\Psi_{m}^{-1}\|_{{\rm op}}
  \|\widehat\Psi_{m}^{-1}(\widehat\Phi_m')^*\widehat\Phi_m'\|_{{\rm op}}}
  \leqslant
  2\sqrt{\mathfrak cq\kappa^{-1}\|\widehat\Psi_{m}^{-1}(\widehat\Phi_m')^*\widehat\Phi_m'\|_{{\rm op}}} =: M.
 \end{eqnarray*}
 Then,
 \begin{displaymath}
 \frac{nH}{M} =\frac{\sigma}{2\sqrt{\mathfrak cq\kappa^{-1}}}\cdot\sqrt{m}.
 \end{displaymath}
 \item\textbf{Bound on $C_{m,n}(\mathbf X)$:}
 \begin{eqnarray*}
  C_{m,n}(\mathbf X) & \leqslant &
  \frac{\mathbb E(|\varepsilon_{1}^{(1)}|^2)}{n}
  \sup_{t\in\mathcal S_m :\|t\|_n = 1}
  \sum_{i = 1}^{n}\Theta_t(X_i)^2
  \leqslant\frac{\sigma^2}{n^3}
  \sup_{\vec b :\|\widehat\Phi_m\vec b\|_{2,n} =\sqrt n}
  \|\widehat\Phi_m\widehat\Psi_{m}^{-1}(\widehat\Phi_m')^*\widehat\Phi_m\vec b\|_{2,n}^{2}\\
  & = &
  \frac{\sigma^2}{n^3}
  \sup_{\vec b :\|\widehat\Phi_m\vec b\|_{2,n} =\sqrt n}
  \vec b^*\widehat\Phi_{m}^{*}\widehat\Phi_m'\widehat\Psi_{m}^{-1}
  \underbrace{\widehat\Phi_{m}^{*}\widehat\Phi_m\widehat\Psi_{m}^{-1}}_{= n\mathbf I_m}
  (\widehat\Phi_m')^*\widehat\Phi_m\vec b\\
  & = &
  \frac{\sigma^2}{n^2}
  \sup_{\vec b :\|\widehat\Phi_m\vec b\|_{2,n} =\sqrt n}
  \|\widehat\Psi_{m}^{-1/2}(\widehat\Phi_m')^*\widehat\Phi_m\vec b\|_{2,n}^{2}
  \leqslant
  \frac{\sigma^2}{n}\|\widehat\Psi_{m}^{-1/2}(\widehat\Phi_m')^*\|_{{\rm op}}^{2}
  =\frac{\sigma^2}{n}\|\widehat\Phi_m'\widehat\Psi_{m}^{-1}(\widehat\Phi_m')^*\|_{{\rm op}} =: v.
 \end{eqnarray*}
 Then,
 \begin{displaymath}
 \frac{nH^2}{v} = m.
 \end{displaymath}
\end{itemize}
So, by Talagrand's inequality,
\begin{eqnarray*}
 & &
 \mathbb E_{\mathbf X}\left[
 \left(\sup_{t\in\mathcal S_m :\|t\|_n = 1}\nu_{n}^{(1)}(t)^2 - 4H^2\right)_+\right]\\
 & &
 \hspace{2cm}
 \leqslant
 \mathfrak c_1\left[\frac{v}{n}\exp\left(-\mathfrak c_2\frac{nH^2}{v}\right)
 +\frac{M^2}{n^2}\exp\left(-\mathfrak c_3\frac{nH}{M}\right)\right]\\
 & &
 \hspace{2cm}
 \leqslant
 \frac{\overline{\mathfrak c}_1}{n^2}
 \|\widehat\Psi_{m}^{-1}\|_{{\rm op}}\mathfrak L'(m)
 \left[e^{-\mathfrak c_2m}
 + q\exp\left(-\frac{\overline{\mathfrak c}_3}{\sqrt q}\cdot \sqrt{\mathfrak L(m)}\right)\right]
\end{eqnarray*}
where $\mathfrak c_1,\mathfrak c_2,\mathfrak c_3,\overline{\mathfrak c}_1,\overline{\mathfrak c}_3 > 0$ are universal constants, and thus
\begin{eqnarray*}
 S & := &
 \mathbb E_{\mathbf X}\left[
 \sup_{m\in\widehat{\mathcal M}_n}\left\{\sup_{t\in\mathcal S_m :\|t\|_n = 1}\nu_{n}^{(1)}(t)^2
 -\frac{\kappa_0}{6}\widehat V(m)\right\}_+\right]\\
 & \leqslant &
 \frac{\overline{\mathfrak c}_1}{n^2}
 \sum_{m\in\widehat{\mathcal M}_n}\left[
 \|\widehat\Psi_{m}^{-1}\|_{{\rm op}}\mathfrak L'(m)
 \left[e^{-\mathfrak c_2m}
 + q\exp\left(-\frac{\overline{\mathfrak c}_3}{\sqrt q}\cdot\sqrt{\mathfrak L(m)}\right)\right]\right]\\
 & \leqslant &
 \frac{\mathfrak c\overline{\mathfrak c}_1}{n\log(n)}
 \sum_{m\leqslant n}\left[
 \frac{\mathfrak L'(m)}{\mathfrak L(m)}
 \left[e^{-\mathfrak c_2m}
 + q\exp\left(-\frac{\overline{\mathfrak c}_3}{\sqrt q}\cdot\sqrt{\mathfrak L(m)}\right)\right]\right]
\end{eqnarray*}
thanks to the definition of $\widehat{\mathcal M}_n$. Thanks to Condition (\ref{bound_GL_estimator_1}), this term is of order $1/n$. On the other hand, since $\mathfrak L(m)\|\widehat\Psi_{m}^{-1}\|_{{\rm op}}\leqslant\mathfrak cn/\log(n)$ for every $m\in\widehat{\mathcal M}_n$, and by Markov's inequality,
\begin{eqnarray*}
 T & := &
 \mathbb E_{\mathbf X}\left(
 \sup_{m\in\widehat{\mathcal M}_n}
 \sup_{t\in\mathcal S_m :\|t\|_n = 1}\nu_{n}^{(2)}(t)^2
 \right)
 \leqslant
 \frac{1}{n^2}
 \mathbb E(\|\varepsilon^{(2)}\|_{2,n}^{2})
 \sup_{m\in\widehat{\mathcal M}_n}
 \sup_{t\in\mathcal S_m :\|t\|_n = 1}
 \|\Theta_t(\mathbf X)\|_{2,n}^{2}\\
 & \leqslant &
 \frac{1}{n}\mathbb E(\varepsilon_{1}^{4})^{1/2}\mathbb P(|\varepsilon_1| >\mathfrak m_n)^{1/2}
 \sup_{m\in\widehat{\mathcal M}_n}
 \left\{\frac{1}{n^2}\sup_{\vec b :\|\widehat\Phi_m\vec b\|_{2,n} =\sqrt n}
 \|\widehat\Phi_m\widehat\Psi_{m}^{-1}(\widehat\Phi_m')^*\widehat\Phi_m\vec b\|_{2,n}^{2}\right\}\\
 & \leqslant &
 \frac{1}{n}\mathbb E(\varepsilon_{1}^{4})^{1/2}
 \mathbb P(\exp(\kappa\varepsilon_{1}^{2}) > n^q)^{1/2}
 \sup_{m\in\widehat{\mathcal M}_n}
 \|\widehat\Phi_m'\widehat\Psi_{m}^{-1}(\widehat\Phi_m')^*\|_{{\rm op}}\\
 & \leqslant &
 \frac{1}{n}\mathbb E(\varepsilon_{1}^{4})^{1/2}
 \frac{1}{n^{q/2}}\mathbb E(\exp(\kappa\varepsilon_{1}^{2}))^{1/2}
 \frac{\mathfrak cn}{\log(n)}
 \sum_{m\leqslant n}\frac{\mathfrak L'(m)}{\mathfrak L (m)}
 \leqslant
 \frac{\mathfrak c_4}{n^{q/2}\log(n)}
 \sum_{m\leqslant n}\frac{\mathfrak L'(m)}{\mathfrak L (m)}
\end{eqnarray*}
with $\mathfrak c_4 =\mathfrak c\mathbb E(\varepsilon_{1}^{4})^{1/2}\mathbb E(\exp(\kappa\varepsilon_{1}^{2}))^{1/2}$. Thanks to Condition (\ref{bound_GL_estimator_2}), this term is of order $1/n$. In conclusion,
\begin{displaymath}
\mathbb E\left[
\sup_{m\in\widehat{\mathcal M}_n}\left\{
\|\widehat b_{m}^{\prime,1} -\mathbb E_{\mathbf X}(\widehat b_{m}^{\prime,1})\|_{n}^{2}
-\frac{\kappa_0}{3}\widehat V(m)\right\}_+\right]
\leqslant
2\mathbb E(S) + 2\mathbb E(T)\leqslant\frac{\mathfrak c_5}{n}.
\quad\Box
\end{displaymath}
%


%
\subsection{Proof of Proposition \ref{rate_trigonometric_GL}}\label{subsection_rate_trigonometric_GL}
Here, $(\varphi_j)_{j\in\mathbb N^*}$ is the trigonometric basis. Thus, with $\mathfrak L(m)$ of order $m$ and $\mathfrak L'(m)$ of order $m^3$, Conditions (\ref{bound_GL_estimator_1}) and (\ref{bound_GL_estimator_2}) are obviously fulfilled. Moreover, under $f(x)\geqslant f_0 > 0$, we know that $\|\Psi_{m}^{-1}\|_{\rm op}\leqslant 1/f_0$, and then $M_{n}^{+}$ has order $n/\log(n)$. We take $M_{n}^{+} = n/\log(n)$ for simplicity. The first terms of the bound in Theorem \ref{bound_GL_estimator} have been already evaluated in the proof of Corollary \ref{rate_trigonometric}, so we have to study the additional ones:
\begin{displaymath}
\sup_{m < m'\leqslant n/\log(n)}
\left\{\|\Delta_{m',m' + p}^{f,1}\|_{\rm op}^{2}
(\|b_{M_{n}^{+}} - b_{m'}\|_{f}^{2} +\|b - b_{M_{n}^{+}}\|_{\infty}^{2})\right\}
\quad {\rm and}\quad
\sup_{m < m'\leqslant n/\log(n)}
\|b_{m'}' - b_m'\|^2.
\end{displaymath}
We assume that $\sum_j\langle b,\varphi_j\rangle^2 j^{2\beta}\leqslant L$ with $\beta\in\mathbb N\cap (1,\infty)$. First,
\begin{eqnarray*}
 \sup_{m < m'\leqslant n/\log(n)}
 \|\Delta_{m',m' + p}^{f,1}\|_{\rm op}^{2}
 \|b_{M_{n}^{+}} - b_{m'}\|_{f}^{2}
 & \leqslant &
 \frac{\|f\|_\infty }{f_0}
 \sup_{m < m'\leqslant n/\log(n)}(m')^2
 \sum_{j\geqslant m'}\langle b,\varphi_j\rangle^2
  \leqslant 
 \frac{\|f\|_{\infty}}{f_0}m^{-2(\beta-1)}.
\end{eqnarray*}
So, this term is of same order than the bias term. Next,
\begin{displaymath}
\sup_{m < m'\leqslant n/\log(n)}
\|b_{m'}' - b_m'\|^2\leqslant
\sum_{j\geqslant m}[(2\pi j)\langle b,\varphi_j\rangle]^2\leqslant
Cm^{-2(\beta -1)}.
\end{displaymath}
Lastly, for any $m$ and $x\in I$,
\begin{eqnarray*}
 |(b - b_m)(x)|
 & \leqslant &
 \sqrt 2\sum_{j\geqslant m}|\langle b,\varphi_j\rangle|
 \leqslant
 \sqrt 2\left(\sum_{j\geqslant m}j^{2\beta}\langle b,\varphi_j\rangle^2
 \sum_{j\geqslant m}j^{-2\beta}\right)^{1/2}\\
 &\leqslant &
 \sqrt{\frac{2L}{2\beta - 1}}m^{-\beta + 1/2}
 \leqslant\mathfrak c(\beta,L)m^{-\beta + 1/2},
\end{eqnarray*}
which gives $\|b - b_m\|_{\infty}\leqslant\mathfrak c(\beta,L)m^{-\beta + 1/2}$ and
\begin{displaymath}
\sup_{m < m'\leqslant n/\log(n)}
\|\Delta_{m',m' + p}^{f,1}\|_{\rm op}^{2}
\|b - b_{M_{n}^{+}}\|_{\infty}^{2}\leqslant
\frac{\mathfrak c(\beta, L)}{f_0}\left(\frac{n}{\log(n)}\right)^{-2\beta + 3}.
\end{displaymath}
We have $n^{-2\beta + 3}\leqslant n^{-2(\beta - 1)/(2\beta + 1)}$ as soon as $\beta\geqslant (3 +\sqrt{13})/4\simeq 1.65$, which holds true when $\beta\in\mathbb N\cap (1,\infty)$. In conclusion, this together with Theorem \ref{bound_GL_estimator} and the orders given in Section \ref{trigo-order} gives the announced result. $\Box$
%


%
\subsection{Proof of Proposition \ref{rate_Hermite_GL}}
Here, $(\varphi_j)_{j\in\mathbb N^*}$ is the Hermite basis, and for $\|\Psi_{m}^{-1}\|_{\rm op} = m^{\gamma}$, the constraint on the collection of models implies $M_{n}^{+} = n^{1/(2\gamma + 1/2)}$. This is compatible with the choice of $m_{\rm opt} = 1/n^{s + 1/2}$ as $s > 2\gamma + 9/4 > 2\gamma$. Again, we have to study the orders of
 the additional terms of the bound in Theorem \ref{bound_GL_estimator}:
\begin{displaymath}
\sup_{m < m'\leqslant n/\log(n)}
\left\{\|\Delta_{m',m' + p}^{f,1}\|_{\rm op}^{2}
(\|b_{M_{n}^{+}} - b_{m'}\|_{f}^{2} +\|b - b_{M_{n}^{+}}\|_{\infty}^{2})\right\}
\quad {\rm and}\quad
\sup_{m < m'\leqslant n/\log(n)}
\|b_{m'}' - b_m'\|^2
\end{displaymath}
under the regularity condition $b\in W_{s}^{H}(L)$ with $s > 2\gamma + 9/4 > 1$. First,
\begin{eqnarray*}
 \sup_{m < m'\leqslant M_{n}^{+}}
 \|\Delta_{m',m' + p}^{f,1}\|_{\rm op}^{2}\|b_{M_n^+} - b_{m'}\|_{f}^{2}
 & \leqslant &
 \|f\|_{\infty}\sup_{m < m'\leqslant M_{n}^{+}}(m')^{\gamma + 1}
 \sum_{j\geqslant m'}\langle b,\varphi_j\rangle^2\\
 & \leqslant &
 \|f\|_{\infty}m^{-s +\gamma + 1}.
\end{eqnarray*}
So, this term is of same order than the bias term. Next, using Formula (\ref{recursive_derivative_Hermite}),
\begin{displaymath}
\sup_{m < m'\leqslant M_{n}^{+}}
\|b_{m'}' - b_m'\|^2\lesssim
\sum_{j\geqslant m}
j\langle b,\varphi_j\rangle^2\leqslant Cm^{-s + 1}.
\end{displaymath}
This term is also of same order than the first bias term and is negligible with respect to the previous one. Lastly, $\|b - b_m\|_{\infty}^{2}\leqslant C(s,L)\pi^{-1/2}m^{-s + 1}$, and thus
\begin{displaymath}
\sup_{m < m'\leqslant M_{n}^{+}}
\left\{\|\Delta_{m',m' + p}^{f,1}\|_{\rm op}^{2}
\|b - b_{M_{n}^{+}}\|_{\infty}^{2}\right\}\lesssim n^{-(s -\gamma - 2)/(2\gamma + 1/2)}
\end{displaymath}
by using the value of $M_{n}^{+}$. We have
\begin{displaymath}
-\frac{s -\gamma - 2}{2\gamma + 1/2}
\leqslant -\frac{s - 1 -\gamma}{s + 1/2}
\quad {\rm if}\quad
(s -\gamma - 2)\left(s +\frac{1}{2}\right) -
(s -\gamma - 1)\left(2\gamma +\frac{1}{2}\right) > 0.
\end{displaymath}
Since
\begin{displaymath}
(s -\gamma - 2)\left(s +\frac{1}{2}\right) -
(s -\gamma - 1)\left(2\gamma +\frac{1}{2}\right) =
(s -\gamma)(s - 2\gamma - 2) -\frac{1}{2},
\end{displaymath}
$s -\gamma > 2$ and $s - 2\gamma - 2 > 1/4$, the constraint is fulfilled and this last term is negligible with respect to the rate. Considering the orders obtained in section \ref{Explicit_Hermite}, we get the result. $\Box$


\begin{thebibliography}{99}
\bibitem{AS64} Abramowitz, M. and  Stegun, I. (1964) {\em Handbook of mathematical functions with formulas, graphs, and mathematical tables.} National Bureau of Standards Applied Mathematics Series, 55.
\bibitem{BARAUD00}  Baraud, Y. (2000) Model selection for regression on a fixed design. {\em Probab. Theory Related Fields} {\bf 117}, no. 4, 467-493.
\bibitem{BARAUD02}  Baraud, Y. (2002) Model selection for regression on a random design. {\em ESAIM Probab. Statist.} {\bf 6}, 127-146.
\bibitem{BCD19}  Bercu, B., Capderou, S. and Durrieu, G. (2019) Nonparametric recursive estimation of the derivative of the regression function with application to sea shores water quality. {\em Stat. Inference Stoch. Process.} {\bf 22}, no. 1, 17-40.
 \bibitem{CHAGNY2013}  Chagny, G.~(2013) Warped bases for conditional density estimation. {\em Math. Methods Statist.} {\bf 22}, no. 4, 253-282. 
 \bibitem{Char11} Charnigo, R., Hall, B. and Srinivasan, C.~(2011) A generalized $C_p$ criterion for derivative estimation. {\em Technometrics}, {\bf 53}(3), 238-253.
\bibitem{Chesneau14}  Chesneau, C.~(2014). A note on wavelet estimation of the derivatives of a regression function in a random design setting. {\em Int. J. Math. Math. Sci.}, Art. ID 195765, 8 pp.
 \bibitem{CDL13} Cohen, A., Davenport, M.A. and Leviatan, D. (2013) On the Stability and Accuracy of Least Squares Approximations. {\it Found. Comput. Math.} {\bf 13}, 819-834.
  \bibitem{CDL19} Cohen, A., Davenport, M.A. and Leviatan, D. (2019). Correction to: On the stability and accuracy of least squares approximations. {\em Found. Comput. Math.} {\bf 19}, 239.
  \bibitem{CDS20} Comte, F., Duval, C. and Sacko O. (2020) Optimal adaptive estimation on $\mathbb R$ or $\mathbb R^+$ of the derivatives of a density. {\em Math. Methods Statist.}, {\bf 39}, 1-31.
 \bibitem{CGC20} Comte, F. and Genon-Catalot, V. (2020) Regression Function Estimation on Non-Compact Support as a Partly Inverse Problem. {\it The Annals of the Institute of Statistical Mathematics} {\bf 72}, 4, 1023-1054.
 \bibitem{CGC18} Comte, F. and Genon-Catalot, V. (2018) Laguerre and Hermite bases for inverse problems. {\em J. Korean Statist. Soc.} {\bf 47}, no. 3, 273-296.
 \bibitem{CLS} Comte, F. and  Lacour, L.~(2021). Adaptive estimation of the conditional density from direct or noisy data. To appear in {\em  Ann. Inst. Henri Poincar\'e Probab. Stat.}
 \bibitem{CM20} Comte, F. and Marie, N. (2020) Bandwidth Selection for the Wolverton-Wagner Estimator. {\it Journal of Statistical Planning and Inference} {\bf 207}, 198-214.
 \bibitem{COMTE14} Comte, F. (2017) {\it Estimation non-param\'etrique.} Spartacus IDH, 2nd \'edition.
\bibitem{DTG16} Dai, W., Tong, T. and Genton, M. G.~(2016) Optimal estimation of derivatives in nonparametric regression. {\em J. Mach. Learn. Res.} {\bf 17}, Paper No. 164, 25 pp.
 \bibitem{DeVoreLorentz}  DeVore, R. A. and  Lorentz, G. G.~(1993) {\em Constructive approximation.} Grundlehren der Mathematischen Wissenschaften [Fundamental Principles of Mathematical Sciences], {\bf 303}. Springer-Verlag, Berlin. 
 \bibitem{Duss22} Dussap, F.~(2022) Nonparametric Multiple Regression by Projection on Non-compactly Supported Bases.  Preprint hal-03506635, to appear in {\it The Annals of the Institute of Statistical Mathematics}.
\bibitem{EFROMOVICH98} Efromovich, S. (1998) Simultaneous sharp estimation of functions and their derivatives. {\em The Annals of Statistics}, {\bf 26}(1), 273-278.
\bibitem{EFROMOVICH99} Efromovich, S. (1999) {\em Nonparametric curve estimation: methods, theory, and applications.} Springer Series in Statistics.
\bibitem{FanGij} Fan, J. and Gijbels, I.~(1996) {\em Local polynomial modelling and its applications}. Monographs on statistics and applied probability {\bf 66}, CRC Press.
\bibitem{GasserMuller}  Gasser, T. and M\"uller, H.~(1984). Estimating regression functions and their derivatives by the kernel method. {\it Scand. J. Statist.} {\bf  11}, 171-185.
\bibitem{GL2011} Goldenshluger, A. and Lepski, O.~(2011)  Bandwidth selection in kernel density estimation: oracle inequalities and adaptive minimax optimality. {\em The Annals of Statistics} {\bf 39}, no. 3, 1608-1632. 
\bibitem{HMY} Hall, P.,  M\"{u}ller, H.-G.,  Yao, F.~(2009) Estimation of functional derivatives. {\em The Annals of Statistics} {\bf 37}, no. 6A, 3307-3329. 
 \bibitem{HuangChan} Huang, L.-S., and Chan, K.-S.~(2014). Local Polynomial and Penalized Trigonometric Series Regression. Statistica Sinica, 24(3), 1215-1238.
 \bibitem{INDRITZ61} Indritz, J. (1961) An inequality for Hermite polynomials. {\em Proc. Amer. Math. Soc.}, {\bf 12}, 981-983.
 \bibitem{Lagrange} Lagrange, R.~(1939) Polyn\^{o}mes et fonctions de Legendre.
{\it M\'emorial des sciences math\'ematiques}, no. 97, 88 p. 
 \bibitem{LEPSKI18} Lepski, O. V. (2018) A new approach to estimator selection. {\em Bernoulli}, {\bf 24}(4A), 2776-2810.
 \bibitem{LiuBra} Liu, Y. and  De Brabanter, K.~(2020) Smoothed nonparametric derivative estimation using weighted difference quotients. {\em J. Mach. Learn. Res.} {\bf  21}, Paper No. 65, 45 pp.
 \bibitem{NADARAYA64} Nadaraya, E.A.(1964) On a Regression Estimate. (Russian) {\it Verojatnost. i Primenen.} {\bf 9}, 157-159.
 \bibitem{PARZEN62} Parzen, E. (1962) On the Estimation of a Probability Density Function and the Mode. {\it The Annals of Mathematical Statistics} {\bf 33}, 1065-1076.
 \bibitem{RS02} Ramsay, J.O. and Silverman, B.W.~(2002). {\em Applied Functional Data Analysis: Methods and Case Studies.} Springer.
\bibitem{RR83}  Rice, J. and Rosenblatt, M.~(1983). Smoothing splines: regression, derivatives and deconvolution. {\it The Annals of Statistics} {\bf 11}, no. 1, 141-156.
 \bibitem{ROSENBLATT56} Rosenblatt, M. (1956). Remarks on some Nonparametric Estimates of a Density Function. {\it Ann. Math. Statist.} {\bf 27}, 832-837.
 \bibitem{STONE80} Stone, C. J.~(1980). Optimal rates of convergence for nonparametric estimators. {\em The Annals of Statistics} {\bf 8}, no. 6, 1348-1360.
 \bibitem{STONE82} Stone, C. J. (1982) Optimal Global Rates of Convergence for Nonparametric Regression. {\it The Annals of Statistics}, {\bf 10}, 4, 1040-1053.
\bibitem{STONE85} Stone, C.J.~(1985) Additive regression and other nonparametric models. {\em   The Annals of Statistics},  689-705, 1985.
 \bibitem{TSYBAKOV09} Tsybakov, A. (2009) {\it Introduction to Nonparametric Estimation.} Springer.
 \bibitem{WATSON64} Watson, G. S. (1964) Smooth Regression Analysis. {\it Sankhya} Ser. A {\bf 26},   359-372.
 \bibitem{ZhouWolfe} Zhou, S. and Wolfe, D.A.~(2000). On derivative estimation in spline regression. {\em Statistica Sinica},  93-108.
\end{thebibliography}
\end{document}